\definecolor{darkcyan}{cmyk}{1, 0, 0, 0.6}
\newtheorem{thm}{Theorem}[section]
\newtheorem{prop}[thm]{Proposition}
\newtheorem{coro}[thm]{Corollary}
\newtheorem{lemma}[thm]{Lemma}
\theoremstyle{definition}
\newtheorem{defi}[thm]{Definition}
\theoremstyle{remark}
\newtheorem{remark}[thm]{Remark}
\numberwithin{equation}{section}
\newcommand*\dif{\mathop{}\!\mathrm{d}}
\newcommand{\eps}{\epsilon}
\newcommand{\ubar}[1]{\underaccent{\bar}{#1}}
\newcommand{\R}{\mathbb R}
\newcommand{\mc}[1]{\mathcal{#1}}
\newcommand{\mr}[1]{\mathrm{#1}}
\newcommand{\bs}[1]{\boldsymbol{#1}}
\newcommand{\br}[1]{\bs{\mathrm{#1}}}
\newcommand{\ms}[1]{\mathsf{#1}}
\newcommand{\msc}[1]{\mathscr{#1}}
\newcommand{\wt}[1]{\widetilde{#1}}
\DeclareMathOperator*{\esssup}{ess\,sup}
\DeclareMathOperator*{\essinf}{ess\,inf}
\title[Time regularity and decay for $2\times 2$ conservation laws]{Strong time regularity and decay of $\mathbf L^\infty$ solutions to $2\times 2$ systems of conservation laws}
\author{Luca Talamini}
\address{Mathematics Area, SISSA, Trieste}
\email{ltalamin@sissa.it}
\begin{document}

\maketitle

\begin{abstract}
    We consider $\mathbf L^\infty$ solutions to $2\times 2$ systems of conservation laws. For genuinely nonlinear systems we prove that finite entropy solutions (in particular entropy solutions, if a uniformly convex entropy exists) belong to $C^0(\mathbb R^+; \mathbf L^1_{loc}(\mathbb R))$. 
Our second result establishes a dispersive-type decay estimate for vanishing viscosity solutions.
Both results are unified by the use of a kinetic formulation.
\end{abstract}

\section{Introduction}
In this paper we study $\mathbf L^\infty$ entropy solutions $\bs u: \mathbb R^+ \times \mathbb R \to  \mathbb R^2$ to $2\times 2$ systems of conservation laws in one space dimension
\begin{equation}\label{eq:systemi}
\partial_t\,\bs  u(t,x) + \partial_x \, f(\bs  u(t,x)) = 0, \qquad \text{in $\mathscr D^\prime_{t,x}$} \qquad \bs u \in \mc U \subset \mathbb R^2, \quad f: \mc U \to \mathbb R^2.
\end{equation}
Entropy solutions are weak solutions to \eqref{eq:systemi} that in addition satisfy the entropy inequality 
 \begin{equation}
     \partial_t \eta(\bs u) + \partial_x q(\bs u) \leq 0 \qquad \text{in $\mathscr D^\prime_{t,x}$}
 \end{equation}
 for every entropy-entropy flux pair $(\eta(\bs u), q(\bs u)) \in \mathbb R \times \mathbb R$ such that 
 $$
 \nabla q(\bs u) = \nabla \eta(\bs u) D f(\bs u), \qquad \text{$\eta$ convex.}
 $$

It is by now a classical result that, under \emph{a priori} $\mathbf L^\infty$ and energy bounds on a sequence of approximate solutions $\bs u^\varepsilon$, the method of compensated compactness \cite{DiP83a, Tar79} allows to prove the strong compactness of the family $\{\bs u^\varepsilon\}_{\varepsilon}$, under standard nonlinearity assumptions on the flux $f$, known as \textit{genuine nonlinearity} (see Definition \ref{defi:GNL}), thus proving existence of solutions in some important cases (see e.g. \cite{LPT96}). A general existence result is lacking, and the assumptions allowing to apply the method must be checked each time. For an account on this topic we refer to \cite[Chapter 9]{Ser00}, \cite{Daf16}.

A long standing problem is the one of understanding the regularity and other qualitative properties of solutions constructed with the compensated compactness method (see e.g. the open questions in \cite[Section 1.13]{Per02}). It is known that merely continuous weak solutions to \eqref{eq:systemi} can be quite wild, in particular they are non-unique \cite{CVY24}; however entropic solutions may still exhibit better regularity or stability properties. It is conjectured that uniqueness holds at least in some intermediate spaces  (see \cite{ABB23, BMG25, ABM25} for some recent results), in between small $BV$ (where uniqueness is known \cite{BB05, BDL23, CKV22}) and small $\mathbf L^\infty$ (where existence of solutions with good decay/regularity properties is known \cite{GL70, BCM10, Gla24}, but nothing is known about uniqueness).  A few special exceptions for which some results about the structure and regularity of large solutions are available are systems of Temple class \cite{AC05} (systems for which rarefaction and shock curves coincide) and $2\times 2$ systems with a specific structure on the eigenvalues (i.e. the eigenvalue $\lambda_i$ depends only on the $i$-th Riemann invariant), for which a notable example is the system of isentropic gas dynamics with $\gamma = 3$ (\cite{LPT94b}, \cite{Gol23}, \cite{Tal24}, \cite{Vas99}). 

For general genuinely nonlinear systems, in \cite{AMT25} it is shown that finite entropy solutions to \eqref{eq:systemi} satisfy a kinetic equation (based on a family of discontinuous entropies constructed in \cite{PT00}, \cite{Tza03}), and a Liouville-type result for isentropic solutions is obtained. In particular this implies the existence of a set $\br J$ (the candidate jump set) with $\mr{dim}_{\mathscr H}(\br J) \leq 1$, such that every point outside of $\br J$ is of vanishing mean oscillation. Let us point out that the $1$-rectifiability of $\br J$ is an open problem at the time of writing.

The aim of this paper is to further exploit the kinetic formulation of \cite{AMT25} and prove some qualitative properties of solutions.

\subsection{Strong Time regularity} Our first and main contribution is to show that entropy solutions (or more generally finite entropy solutions, see Proposition \ref{prop:e-fe}) are strongly time continuous.
\begin{thm}\label{thm:strongtime}
     Let $\bs u:  [0, +\infty) \times \mathbb R \to \mc U$ be a finite entropy solution to \eqref{eq:systemi} (see Definition \ref{defi:fes}). Assume that the system is strictly hyperbolic and genuinely nonlinear. Then $\bs u \in C^0([0, +\infty), \mathbf L^1_{loc}(\mathbb R))$.
\end{thm}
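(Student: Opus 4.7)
The strategy is to combine weak-$*$ time continuity (which follows from the equation itself) with strong one-sided continuity obtained from the entropy inequality, and to upgrade this to full strong continuity by exploiting the kinetic formulation of \cite{AMT25} together with compensated compactness for $2\times 2$ GN systems.

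First, the conservation law $\partial_t \bs u = -\partial_x f(\bs u)$ together with the $\mathbf L^\infty$ bound gives $\bs u \in \mr{Lip}([0,\infty); \mathbf W^{-1,\infty}_{loc})$, which combined with boundedness in $\mathbf L^\infty_{loc}$ and metrizability of the weak-$*$ topology on bounded sets yields $\bs u \in C^0_{w*}([0,\infty); \mathbf L^\infty_{loc})$. For strong right-continuity, fix any convex entropy $\eta$ with finite dissipation $\mu_\eta := -(\partial_t \eta(\bs u) + \partial_x q(\bs u)) \geq 0$ and a non-negative $\phi \in C^\infty_c(\mathbb R)$; the map $E(t) := \int \eta(\bs u(t,\cdot))\phi\,dx$ decomposes as the sum of an absolutely continuous function and a right-continuous non-increasing function, so $E$ has no right-jumps and its left-jump at $t_0$ equals $\int \phi \, d(\mu_\eta|_{\{t_0\}\times \mathbb R})$. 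Weak lower semicontinuity of $E$ (via convexity of $\eta$ and weak-$*$ continuity of $\bs u$) forces $E(t_0) \leq E(t_0^\pm)$, which combined with $E(t_0^+) \leq E(t_0)$ yields right-continuity at every $t_0$. Choosing a uniformly convex $\eta$, or, when none exists, a suitable integral combination of the kinetic entropies of \cite{PT00, Tza03} whose Hessians span the positive cone (available under genuine nonlinearity), this upgrades weak-$*$ right-convergence to strong $\mathbf L^1_{loc}$ right-convergence of $\bs u$.

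For strong left-continuity, the task reduces to ruling out time atoms in $\mu_\eta$. My plan is to apply the DiPerna--Tartar compensated compactness reduction for $2\times 2$ GN systems to the time-translated family $\bs u^k(t,x) := \bs u(t + t_k - t_0, x)$ for $t_k \nearrow t_0$: this is a bounded sequence of finite entropy solutions converging weakly-$*$ to $\bs u$, so its Young measure is Dirac and hence $\bs u^k \to \bs u$ strongly in $\mathbf L^1_{loc}(\mathbb R_t \times \mathbb R_x)$. Combining this space-time convergence with the strong right-continuity from the previous step via a triangle inequality at times $s$ slightly larger than $t_0$, one transfers the convergence to the specific time $t = t_0$, concluding $\bs u(t_k,\cdot) \to \bs u(t_0,\cdot)$ strongly.

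The main obstacle is precisely this last transfer from a.e.-in-$t$ strong convergence to convergence at the specific time $t_0$, equivalently, the absence of time atoms in the entropy dissipation at \emph{every} time rather than a.e. Here the kinetic formulation of \cite{AMT25} should be pivotal, as it supplies a rich family of entropy--entropy flux pairs whose joint structure, together with the $\mathcal H^1$-local finiteness of the candidate jump set $\br J$, encodes the absence of horizontal shock concentrations intrinsic to genuine nonlinearity. A secondary technical point is the construction of the uniformly convex combination of kinetic entropies used in the right-continuity step, which is possible under genuine nonlinearity but requires careful bookkeeping.
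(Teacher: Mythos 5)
There is a genuine gap, and it sits exactly where you flag it: the passage from a.e.-in-$t$ information to information at the \emph{specific} time $t_0$. Your time-translation device does not help: the translates $\bs u(\cdot + t_k - t_0,\cdot)$ converge to $\bs u$ strongly in $\mathbf L^1_{loc}$ of space-time simply by continuity of translation, and no compensated-compactness input can be extracted from the fixed-time slices $\bs u(t_k,\cdot)$, since the Tartar--DiPerna machinery applies to families of (approximate) space-time solutions, not to sequences of traces. Saying that the kinetic formulation ``should be pivotal'' is naming the difficulty, not resolving it. The paper's actual mechanism is a blow-up at $(\bar t,\bar x)$ for a.e.\ $\bar x$: the covering Lemma \ref{lemma:covering} kills the dissipation in the rescaled one-sided rectangles, compensated compactness then produces an isentropic limit whose weak-$*$ trace at $t=\bar t^+$ is the constant $\bs u(\bar t,\bar x)$, and a Liouville-type theorem (Corollary \ref{lemma:uconst}, resting on Lemma \ref{lemma:lrsol}, Proposition \ref{prop:inter} and the Lagrangian representation of Proposition \ref{prop:lagr}) forces that limit to be constant. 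This identification of the blow-up limit is the heart of the proof and is entirely absent from your plan; without it one cannot exclude time atoms of the dissipation at \emph{every} time.

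A second problem is that your right-continuity step is not available for the class the theorem is stated for. For a finite entropy solution $\mu_\eta$ is a signed measure, so $t\mapsto\int\eta(\bs u(t,\cdot))\phi\,dx$ is merely $BV$ and the ``absolutely continuous plus non-increasing'' decomposition, hence the one-sided jump inequality $E(t_0^+)\le E(t_0)$, fails; your argument covers only entropy solutions admitting a uniformly convex entropy, and the fallback ``integral combination of kinetic entropies whose Hessians span the positive cone'' is unsubstantiated (the kinetic entropies $\bs\chi[\xi]$ are not convex, only locally positive near $\xi\sim\phi_1(\bs u)$). The paper avoids both issues: it reduces left- to right-continuity by the time reversal $\bs u(T-t,-x)$, which preserves the finite-entropy class precisely because no sign is imposed, and it upgrades the weak limit to a strong one not by convexity but by a Young-measure commutation identity tested against the entropies $\int\bs\chi[\xi](\cdot)\rho(\xi)\,d\xi$, using the local strict positivity of $\bs\chi[\xi]$ from Proposition \ref{prop:localspeed} to force the Young measure to be a Dirac mass.
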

By Proposition \ref{prop:e-fe}, if there exists a uniformly convex entropy $E : \mc U \to \mathbb R$, entropy solutions are in particular also finite entropy solutions, so that the result applies.
To the best of our knowledge, the only available result concerning time regularity for $2\times 2$ systems of conservation laws in the $\mathbf L^\infty$
  setting is due to \cite{Vas99}. There, the author considers the isentropic Euler system with $\gamma = 3$, and establishes time regularity by leveraging the specific structure of the system through novel blow-up techniques. We point out that Theorem~\ref{thm:strongtime} also applies to entropy solutions of the isentropic Euler system for a general exponent \( \gamma \), provided that the solutions remain uniformly away from vacuum (i.e., \( \rho > c > 0 \)) (however this assumption was not needed in \cite{Vas99}). This condition ensures the uniform validity of the hyperbolicity assumption~\eqref{eq:hyperb}, which is used throughout the paper.

\begin{remark}
The existence of a uniformly convex entropy is a standard assumption which is satisfied in all physical cases. Moreover, under mild conditions on the Riemann invariants (see \cite[Section 12.2]{Daf16}), the Lax entropies, which are entropies of the following form:
$$
\eta^1_k(w, z) := e^{kw}\left[\phi(w,z) + \frac{1}{k}\chi(w,z) + \mc O(1/k^2)\right]
$$
$$
\eta^2_k(w, z) := e^{kz}\left[\alpha(w,z) + \frac{1}{k}\beta(w,z) + \mc O(1/k^2)\right]
$$
are uniformly convex for big enough $k$.
\end{remark}

 The proof of Theorem \ref{thm:strongtime} is based on some Lagrangian techniques, first introduced in the scalar case \cite{BBM17, Mar19}, and adapted to systems of conservation laws in \cite{AMT25}, and on blow up techniques, introduced for the first time in \cite{Vas99} (see also \cite{DOW03})

\subsection{A decay property for vanishing viscosity solutions} Our second result, which is of a different nature but whose proof is still related to the kinetic formulation, is the following dispersive-type estimate. 

\begin{thm}\label{thm:decay}
    Assume that the system \eqref{eq:systemi} is genuinely nonlinear, strictly hyperbolic, and that there exists a uniformly convex entropy $E : \mc U \to \mathbb R$. Let $\bs u$ be a vanishing viscosity solution to \eqref{eq:systemi} (see Definition \ref{defi:vvs}). Assume that for some $\widehat{\bs u} \in \mc U$ we have $\bs u(0, x) - \widehat{\bs u} \in \mathbf L^1$. Then there is a constant $C$ depending only on $f$ and $E$ such that 
    \begin{equation}
        \int_0^{+\infty} \int_{\mathbb R} |\bs u(t,x) - \widehat{\bs u}|^4 \dif x \dif t \leq C \Big( \|\bs u(0, \cdot)- \widehat{\bs u}\|_{\mathbf L^1} +\|\bs u(0, \cdot)- \widehat{\bs u}\|_{\mathbf L^1}^2 \Big).
    \end{equation}
\end{thm}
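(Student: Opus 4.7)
My strategy combines the kinetic formulation of \cite{AMT25}, uniform convexity of $E$, and a Glimm-type $L^1$ interaction functional. The two families of waves contribute differently to $\iint |\bs u - \hat{\bs u}|^4$: cross-family interactions between Riemann invariants give rise to the quadratic $\|\bs u_0 - \hat{\bs u}\|_{L^1}^2$ piece via a Glimm-style $L^1$ potential, while single-family dispersion (driven by genuine nonlinearity and bounded by the total entropy dissipation) yields the linear $\|\bs u_0 - \hat{\bs u}\|_{L^1}$ piece.

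\emph{Global dissipation bound.} For a viscous approximation $\bs u^\varepsilon$, introduce the relative entropy $E^\ast(\bs u \mid \hat{\bs u}) := E(\bs u) - E(\hat{\bs u}) - \nabla E(\hat{\bs u}) \cdot (\bs u - \hat{\bs u})$; by uniform convexity, $E^\ast(\bs u) \asymp |\bs u - \hat{\bs u}|^2$. The viscous entropy inequality integrated over $(0, +\infty) \times \mathbb R$ yields
$$\iint \mu^\varepsilon\, dx\, dt \le \int E^\ast(\bs u_0 \mid \hat{\bs u})\, dx \le C\, \|\bs u_0 - \hat{\bs u}\|_{L^\infty}\, \|\bs u_0 - \hat{\bs u}\|_{L^1} \le C'\, \|\bs u_0 - \hat{\bs u}\|_{L^1},$$
where $\mu^\varepsilon := \varepsilon\, D^2 E(\bs u^\varepsilon)[\partial_x \bs u^\varepsilon, \partial_x \bs u^\varepsilon] \ge 0$, using the uniform $L^\infty$ bound. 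In Riemann invariants $(w, z)$ centred at $\hat{\bs u}$, strict hyperbolicity gives the pointwise decomposition
$$|\bs u - \hat{\bs u}|^4 \lesssim (w - \hat w)^4 + (z - \hat z)^4 + (w - \hat w)^2 (z - \hat z)^2.$$

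\emph{Cross term via interaction.} For the mixed piece I adopt a Glimm-type interaction potential
$$\Phi(t) := \iint_{\mathbb R^2} K(x - y)\, |w - \hat w|(t, x)\, |z - \hat z|(t, y)\, dx\, dy,$$
with $K$ bounded and strictly monotone (e.g.\ $K = \arctan$), so that $\Phi(0) \le C\, \|\bs u_0 - \hat{\bs u}\|_{L^1}^2$. Using the Riemann-invariant form of the viscous system and strict hyperbolicity $\lambda_2 - \lambda_1 \ge c_0 > 0$, the principal term in $d\Phi/dt$ contributes $-c \int |w - \hat w|\, |z - \hat z|\, dx$ after localisation of $K'$, while the viscous error is controlled pointwise by $\mu^\varepsilon$. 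Integration in time combined with the $L^\infty$ bound then yields
$$\iint (w - \hat w)^2 (z - \hat z)^2 \, dx\, dt \lesssim \|\bs u_0 - \hat{\bs u}\|_{L^1}^2 + \|\bs u_0 - \hat{\bs u}\|_{L^1}.$$

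\emph{Diagonal terms and main obstacle.} For $\iint (w - \hat w)^4$ I invoke the kinetic formulation of \cite{AMT25}: the kinetic densities $\chi_i(t, x; v)$ satisfy transport equations of the form $\partial_t \chi_i + \lambda_i(\cdot;v)\, \partial_x \chi_i = \partial_v m_i + (\text{cross-family coupling})$, with non-negative measures $m_i$ controlled by $\int \mu^\varepsilon$. Genuine nonlinearity provides $\partial_v \lambda_i \ne 0$, so a velocity-averaging / dispersion argument in the spirit of Lions--Perthame--Tadmor (combined with the cross-term estimate to absorb the coupling) yields $\iint (w - \hat w)^4 \lesssim \|\bs u_0 - \hat{\bs u}\|_{L^1} + \|\bs u_0 - \hat{\bs u}\|_{L^1}^2$, and symmetrically for $z$. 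Summing the three pieces and passing to the vanishing viscosity limit (all bounds being uniform in $\varepsilon$) concludes. The main obstacle is this last step: unlike the scalar case, $\lambda_i = \lambda_i(w, z)$ depends on both invariants, so the source of the kinetic equation for $\chi_1$ couples with $\chi_2$, and controlling these couplings while preserving the linear scaling $\iint (w - \hat w)^4 \lesssim \|\bs u_0 - \hat{\bs u}\|_{L^1}$ is the technical crux.
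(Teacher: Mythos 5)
There is a genuine gap, and it sits exactly where you place it: the estimate $\iint (w-\widehat w)^4\,dx\,dt \lesssim \|\bs u_0-\widehat{\bs u}\|_{\mathbf L^1}+\|\bs u_0-\widehat{\bs u}\|_{\mathbf L^1}^2$ for a single family \emph{is} the theorem, and you defer it to an unspecified ``velocity-averaging / dispersion argument in the spirit of Lions--Perthame--Tadmor,'' which you yourself label the unresolved crux. That route does not go through here for reasons intrinsic to the $2\times 2$ setting: the Perthame--Tzavaras kinetic densities $\bs\chi[\xi](\bs u)$ are not sign-definite globally in $\xi$ (the paper points out that this is precisely why compensated-integrability/averaging frameworks cannot be applied directly), and both the strict positivity $\bs\chi[\xi]\geq c>0$ and the monotonicity of the kinetic speed $\partial_\xi\bs\lambda_1[\xi]\geq c$ furnished by genuine nonlinearity hold only in the local window $w(t,x)-\bar r\leq\xi\leq w(t,x)$. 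The paper's actual mechanism is different: (i) localize the kinetic equation to $\xi$-strips $[\ell,\ell+r]$, $r\leq\bar r$, which produces influx/outflux measures with total mass $\lesssim\|\bs u_0-\widehat{\bs u}\|_{\mathbf L^1}$; (ii) introduce the one-sided interaction functional $\mc Q(t)=\int \mathbf 1_{\xi'\leq\xi}\,\mathbf 1_{x\leq x'}\,(\xi-\xi')\,\bs\chi^{r,\ell}(t,x,\xi)\bs\chi^{r,\ell}(t,x',\xi')\,dX$, whose time derivative is $\leq -c\int (w-\ell)^4$ on the strip by the kinetic-speed monotonicity; (iii) iterate over overlapping strips via a Chebyshev/Markov argument to recover the global bound. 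The weight $(\xi-\xi')$ is what absorbs the singular term $\partial_\xi\mu_1$ (using $\mu_1\geq 0$, which holds for vanishing viscosity solutions by Theorem \ref{thm:kin}, together with its global mass bound), and it is the reason the exponent is $4$ rather than the $3$ one would get formally; your sketch supplies no mechanism producing the fourth power, nor any treatment of $\partial_\xi\mu_1$. Your kinetic equation is also written in nonconservative form with a ``cross-family coupling'' source that is not what the formulation of \cite{AMT25} provides; the coupling is hidden in the dependence of $\bs\chi[\xi]$ and $\bs\lambda_1[\xi]$ on both invariants, and only $\mu_1$ (not $\mu_0$) has a sign.

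Two secondary remarks. The entire cross-term section is superfluous: $(w-\widehat w)^2(z-\widehat z)^2\leq\tfrac12\big[(w-\widehat w)^4+(z-\widehat z)^4\big]$, so once the diagonal terms are controlled there is nothing left to prove, and the Glimm-type potential $\Phi$ (whose decay estimate is itself only sketched) buys you nothing. On the other hand, your global dissipation bound via the relative entropy $E^\ast$ and uniform convexity is correct and coincides with how the paper converts point (3) of Theorem \ref{thm:kin} into the bound $|\mu_i|(\mathbb R^+\times\mathbb R^2)\lesssim\|\bs u_0-\widehat{\bs u}\|_{\mathbf L^1}$.
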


The result is restricted to vanishing viscosity solutions, as these exhibit additional structure in the associated dissipation measures—a property that is not a priori guaranteed for general entropy solutions (see Theorem \ref{thm:kin}).

Similar dispersive estimates have been derived for the Euler equations of gas dynamics in \cite{LPT94b}, where a kinetic formulation was introduced and used to establish decay estimate for weak entropy solutions.
Other estimates of dispersive nature, analogous in spirit to the ones we consider here, were established for the multidimensional Burgers equation by employing the framework of compensated integrability, as developed in \cite{Ser18a,Ser19}.  There, it was shown that initial data belonging to $\mathbf{L}^1$ are instantaneously regularized to higher integrability spaces $\mathbf{L}^p$ for all positive times. Here that framework cannot be directly applied because the kinetic formulation available in the setting of $2\times 2$ systems \cite{AMT25} does not involve positive kinetic functions, i.e. they can change sign. In \cite{SS19} the authors managed to obtain an even stronger decay, deriving $\mathbf{L}^\infty$ estimates. We remark that a stronger decay is to be expected for systems of  $2\times 2$ conservation laws, although establishing it appears to be out of reach with the methods introduced in this paper. In particular, it remains an open problem to establish a stronger $\mathbf L^\infty$ decay, analogously to what happens in the small total variation setting.

\vspace{1cm}
The paper is organized as follows. In Section \ref{sec:preliminaries} we outline the general setting and we recall some preliminary results. In Section \ref{sec:strongtime} we prove Theorem \ref{thm:strongtime}. Finally, in Section \ref{sec:decay} we prove Theorem \ref{thm:decay}.

\section{Preliminary Results}\label{sec:preliminaries}
\subsection{General Setting}
 Throughout the paper, we assume that the system \eqref{eq:systemi} is strictly hyperbolic in $\mc U$, so that the matrix $\mr D f$ has distinct real eigenvalues 
\begin{equation}\label{eq:hyperb}
\lambda_1(\bs u) < \lambda_2(\bs u) \qquad \forall \; \bs u \in \mc U
\end{equation}
with corresponding eigenvectors $r_1(\bs u), r_2(\bs u)$.
We also let $\ell_1, \ell_2$ be the corresponding left eigenvectors, normalized so that 
$$
\ell_i(\bs u) \cdot r_i(\bs u) = \delta_{i,j} \qquad \forall \; \bs u \in \mc U.
$$

    Being a system of two equations, \eqref{eq:systemi} admits a coordinate system of \textit{Riemann invariants} $\phi_1, \phi_2$. The latter are smooth invertible functions $\phi = (\phi_1, \phi_2 ): \mc U \to \mathbb R^2$ defined by 
\begin{equation}\label{eq:riemdef}
    \nabla \phi_1(\bs u) = \ell_1(\bs u), \qquad \nabla \phi_2(\bs u) = \ell_2(\bs u) \qquad \forall \bs u \in \mc U.
\end{equation}
We let $\mc W \doteq (\phi_1, \phi_2)(\mc U) \subset \mathbb R^2$.  Since we consider bounded solutions, here $\mc U$ is assumed to be a rectangle in Riemann invariants, i.e. there holds
$$
\mc W = [\ubar w, \bar w] \times [\ubar z, \bar z].
$$
A function \(g\) can be expressed in terms of the state vector \(\bs u\) or in terms of the Riemann invariants \((w, z)\), according to 
\[
g(\bs u) = \hat g(\phi^{-1}(\bs u)), \qquad
\partial_w \hat g = r_1 \cdot \nabla g, \quad \partial_z \hat g = r_2 \cdot \nabla g.
\]
From now on, relying on a common abuse of notation, we will use the same symbol \(g\) for both expressions. 
    A pair of Lipschitz functions $\eta, q: \mc U \subset \mathbb R^2 \to \mathbb R$ is called an \textit{entropy-entropy flux pair} for \eqref{eq:systemi} if
    \begin{equation}\label{eq:entropyeq}
        \nabla \eta(\bs u) \cdot \mr D f(\bs u) = \nabla q(\bs u) \qquad \text{for almost every $\bs u \in \mc U$}.
    \end{equation}
 Admissible (entropy) solutions of \eqref{eq:systemi} are the ones that dissipate the family of \textit{convex} entropies. 

\begin{defi}
    A  function $\bs u: \Omega \to \mc U$ is called an \textit{entropy weak solution} of \eqref{eq:systemi} if it satisfies 
 \begin{equation}
     \mu_{\eta}:= \partial_t \eta(\bs u) + \partial_x q(\bs u) \leq 0 \qquad \text{in} \; \msc D^{\prime}(\Omega)
 \end{equation}
 for all entropy-entropy flux pairs $(\eta, q)$ with $\eta$ a convex function.
 \end{defi}
Given an open set $A \subset \mathbb R^n$, we denote by $\msc M(A)$ the space of Radon measures in $A$. A more general notion of solution is the following.
\begin{defi}\label{defi:fes}
We say that a weak solution of \eqref{eq:systemi} $\bs u$ is a \emph{finite entropy solution} to \eqref{eq:systemi} if $\mu_{\eta} \in \msc M(\Omega)$ for every entropy-entropy flux pair $(\eta, q)$ of class $C^2$. Moreover, $\bs u$ is \emph{isentropic} if $\mu_\eta = 0$ for every $(\eta, q)$ in the same class.
\end{defi}
Notice that here a sign on $\mu_\eta$ is not imposed, and the convexity of $\eta$ is not required. It is a simple but crucial observation that, in the presence of a uniformly convex entropy $E : \mc U \to \mathbb R$, entropy solutions are also finite entropy solutions.
\begin{prop}\label{prop:e-fe}
    Let $\bs u: \Omega \subset \mathbb R^+ \times \mathbb R \to \mc U$ be an entropy solution, and assume that there exists a uniformly convex entropy $E : \mc U \to \mathbb R$. Then $\bs u$ is also a finite entropy solution.
\end{prop}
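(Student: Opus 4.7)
The plan is to exploit the uniform convexity of $E$ to express an arbitrary $C^2$ entropy $\eta$ as a difference of two convex entropies (up to scaling), and then use the entropy inequality to sandwich $\mu_\eta$ between two finite Radon measures.

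First I would observe that, since $\bs u$ takes values in the bounded set $\mc U$ and $E$ is uniformly convex on $\mc U$, there is $c > 0$ such that $D^2 E \geq c I$ on $\mc U$. Given an arbitrary $C^2$ entropy-entropy flux pair $(\eta, q)$, the Hessian $D^2 \eta$ is bounded on $\mc U$ by some constant $C > 0$. Choosing $M \geq C/c$, both $ME + \eta$ and $ME - \eta$ have nonnegative Hessian on $\mc U$, hence are convex entropies whose associated entropy fluxes are $MQ + q$ and $MQ - q$ respectively, where $Q$ is the flux associated to $E$.

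Applying the entropy inequality to these two convex entropies yields
\begin{equation*}
M\mu_E + \mu_\eta \leq 0 \qquad \text{and} \qquad M\mu_E - \mu_\eta \leq 0 \qquad \text{in } \mathscr{D}'(\Omega),
\end{equation*}
so that as distributions
\begin{equation*}
M\mu_E \;\leq\; \mu_\eta \;\leq\; -M\mu_E.
\end{equation*}
Since $\bs u$ is an entropy solution, $\mu_E \leq 0$ is already known to be a (negative) Radon measure on $\Omega$, and consequently $-M\mu_E$ is a nonnegative Radon measure.

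The conclusion then follows from the standard fact that a distribution which is nonnegative on nonnegative test functions is represented by a positive Radon measure: the distribution $\mu_\eta - M\mu_E$ is nonnegative by the lower bound above, hence coincides with an element of $\msc M(\Omega)$. Writing $\mu_\eta = (\mu_\eta - M\mu_E) + M\mu_E$ exhibits $\mu_\eta$ as a difference of two Radon measures, so $\mu_\eta \in \msc M(\Omega)$, as required by Definition~\ref{defi:fes}.

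There is essentially no serious obstacle: the only point worth being careful about is the uniform control on $D^2 \eta$, which works precisely because $\mc U$ is bounded and $\bs u$ takes values in $\mc U$; this makes the choice of $M$ allowed to be independent of $(t,x)$ and purely determined by $\eta$ and $E$. The argument is purely algebraic/distributional once uniform convexity of $E$ is available.
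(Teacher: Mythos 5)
Your proof is correct and follows essentially the same route as the paper: add a large multiple of the uniformly convex entropy $E$ to $\eta$ to produce convex entropies, apply the entropy inequality, and write $\mu_\eta$ as a difference of (nonpositive) Radon measures. The only cosmetic difference is that you use both $ME+\eta$ and $ME-\eta$ to get a two-sided bound, whereas the paper uses only $\eta+\kappa E$ and the identity $\mu_\eta=\mu_{\eta+\kappa E}-\kappa\mu_E$; the underlying decomposition is the same.
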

\begin{proof}
    Let $\eta$ be any $C^2$ entropy, and let $\kappa > 0$ big enough such that $\eta + \kappa E$ is convex. Then we have
    $$
    \begin{aligned}
        \partial_t E(\bs u) + \partial_x G(\bs u) = \mu_E,\\
        \partial_t (\eta + \kappa E)(\bs u) + \partial_x (q + \kappa G)(\bs u) = \mu_{\eta + \kappa E}
    \end{aligned}
    $$
    where $G$ is the entropy flux of $E$, and $\mu_E$, $\mu_{\eta + \kappa E}$ are locally finite negative measures, because $\bs u$ is entropic. Then 
    $$
    \partial_t \eta(\bs u) + \partial_x q(\bs u) = \mu_{\eta + \kappa E} - \kappa \mu_E
    $$
    which proves the result.
\end{proof}

As it is well known, the viscous approximations to \eqref{eq:systemi}
 \begin{equation}\label{eq:vsystem}
 \partial_t \bs u^\epsilon(t,x) + \partial_x f(\bs u^\epsilon(t,x)) = \epsilon \partial^2_{xx} \bs u^\epsilon,  \qquad (t,x) \in \Omega \subset \mathbb R^+ \times \mathbb R   , \qquad \bs u \in \mc U
 \end{equation}
 produce entropy admissible weak solutions of \eqref{eq:systemi} in the limit $\eps \to 0^+$. 

 \begin{defi}\label{defi:vvs}
      We say that $\bs u: \Omega \to \mc U$ is a \emph{vanishing viscosity solution} to \eqref{eq:systemi} if there exists a sequence $\eps_i \to 0^+$ and solutions $\bs u^{\eps_i}: \Omega \to \mc U$  to \eqref{eq:vsystem} such that ${\bs u}^{\eps_i} \to \bs u$ in  $\mathbf L^1_{\mr{loc}}(\Omega)$.
 \end{defi}

We conclude by recalling the definition of \emph{genuine nonlinearity}.

\begin{defi}\label{defi:GNL}
We say that the eigenvalue $\lambda_1$ ($\lambda_2$) is genuinely nonlinear (GNL) if there is $\bar c > 0$ such that 
$$
\partial_w\lambda_{1}(\bs u)\geq \bar c \quad \Big( \partial_z\lambda_{2}(\bs u) \geq \bar c\Big)\qquad \forall \; \bs u \in \mc U.
$$
\end{defi}

\subsection{Kinetic formulation}

We recall the construction in \cite{AMT25} that leads to a kinetic formulation. It is based on the singular entropies of \cite{PT00}, \cite{Tza03}.
Let $g, h$ be 
$$
\begin{aligned}
g(w, z) & = \exp\left[\int_{{\ubar z}}^{ z} -\frac{\lambda_{1z}(w, y)}{\lambda_1(w, y)-\lambda_2(w, y)}\dif y  \right] \\
h(w, z) & = \exp\left[\int_{{\ubar w}}^{w} \frac{\lambda_{2w}(y, z)}{\lambda_1(y, z)-\lambda_2(y, z)}\dif y  \right].
\end{aligned}
$$
\begin{defi}
We denote by $\bs \Theta[\xi]$ the entropy constructed as the unique solution to the Goursat-boundary value problem 
$$
\begin{cases}
\bs \Theta_{wz} = \frac{g_z}{g}\bs \Theta_w + \frac{h_w}{h} \bs \Theta_z, & \text{in} \; \mc W\\
\\
\bs \Theta(w, {\ubar z}) = 1, & \forall \; w \in [{\ubar w}, \bar w] \\
\\
\bs \Theta(\xi, z) = g(\xi, z) & \forall \; z \in [{\ubar z}, \bar z].
\end{cases}
$$
\end{defi}
We call $\bs \Xi$ the entropy flux of $\bs \Theta$.
Up to an additive constant in the entropy flux we can assume that (see \cite{AMT25} or \cite{PT00})
\begin{equation}\label{eq:efluxxi}
\bs \Xi(\xi, z) = \lambda_1(\xi, z) \bs \Theta(\xi, z) \qquad \forall \; z \in [{\ubar z}, \bar z].
\end{equation}
We define distributional entropies (see \cite{PT00})
\begin{equation}\label{e:chir}
\bs \chi[\xi]  \doteq \bs \Theta[\xi]\cdot  \mathbf 1_{\{w \geq \xi\}} , \qquad \wt{\bs \chi}[\xi]  \doteq \bs \Theta[\xi]\cdot  \mathbf 1_{\{w \leq \xi\}} 
\end{equation}
and entropy fluxes
\begin{equation}\label{eq:fluxes}
\bs \psi[\xi] \doteq \bs \Xi[\xi]\cdot  \mathbf 1_{\{w \geq \xi\}} \qquad \wt{\bs \psi}[\xi] \doteq \bs \Xi[\xi]\cdot  \mathbf 1_{\{w \leq \xi\}}.
\end{equation}

For a proof of the following Proposition see e.g. \cite{AMT25}.
\begin{prop}\label{prop:localspeed}
    There exists positive $\bar {r}, c > 0$  such that, for every $\xi, w \in [\ubar w, \bar w]$  and $z \in [\ubar z, \bar z]$ such that $\xi \leq w \leq \xi + \bar r$, the following holds:
\begin{enumerate}
    \item Local strict positivity of the entropies: 
    $$
    \bs \chi[\xi](w, z) \geq c > 0
    $$
    \item If $\lambda_{1}$ is genuinely nonlinear, then we have the monotonicity of the kinetic speed:
$$
\frac{\dif}{\dif \xi} \bs \lambda_1[\xi](w, z) \geq c > 0 
$$
where
\begin{equation}\label{eq:lambdadef}
\bs \lambda_1[\xi](w, z) \doteq \frac{\bs \psi[\xi](w, z)}{\bs \chi[\xi](w, z)} \qquad   \forall \; \xi \leq w \leq \xi  +\bar r.
\end{equation}
\end{enumerate}
A completely symmetric statement holds for the entropies $\wt{\bs \chi}$.
\end{prop}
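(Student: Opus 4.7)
The plan is to prove both statements by analyzing $\bs \Theta[\xi]$ and $\bs \Xi[\xi]$ on the characteristic boundary $\{w = \xi\}$, where explicit formulas follow directly from the Goursat data and from \eqref{eq:efluxxi}, and then extend the resulting estimates to a neighborhood $\xi \leq w \leq \xi + \bar r$ by continuity, making $\bar r$ and $c$ uniform via compactness of $(\xi, z) \in [\ubar w, \bar w] \times [\ubar z, \bar z]$.

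For (1), the Goursat boundary condition gives $\bs \Theta[\xi](\xi, z) = g(\xi, z)$. Since $g$ is the exponential of a continuous function on a compact rectangle, it admits a uniform lower bound $c_0 > 0$. Standard regularity for the linear Goursat problem with smooth coefficients and smooth, parameter-dependent data yields joint continuity of $(\xi, w, z) \mapsto \bs \Theta[\xi](w, z)$ on the closed set $\{\ubar w \leq \xi \leq w \leq \bar w,\ \ubar z \leq z \leq \bar z\}$. A uniform continuity argument then provides $\bar r > 0$, independent of $(\xi, z)$, with $\bs \Theta[\xi](w, z) \geq c_0/2$ for $w \in [\xi, \xi + \bar r]$; multiplying by the indicator gives (1).

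For (2), the key observation is that $\partial_w \bs \lambda_1[\xi](w, z)$ vanishes on $\{w = \xi\}$. Indeed the entropy-flux compatibility for $(\bs \Theta, \bs \Xi)$, written in Riemann invariants, reads $\bs \Xi_w = \lambda_1 \bs \Theta_w$, and combined with \eqref{eq:efluxxi} the quotient rule gives
\begin{equation*}
\partial_w \bs \lambda_1[\xi](w, z)\Big|_{w=\xi} = \frac{\bs \Xi_w \bs \Theta - \bs \Xi \bs \Theta_w}{\bs \Theta^2}\bigg|_{w=\xi} = \frac{\lambda_1 \bs \Theta_w \bs \Theta - \lambda_1 \bs \Theta \bs \Theta_w}{\bs \Theta^2}\bigg|_{w=\xi} = 0.
\end{equation*}
Together with $\bs \lambda_1[\xi](\xi, z) = \lambda_1(\xi, z)$, also from \eqref{eq:efluxxi}, this yields the Taylor expansion $\bs \lambda_1[\xi](w, z) = \lambda_1(\xi, z) + O((w - \xi)^2)$ as $w \to \xi^+$. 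To extract the $\xi$-derivative at fixed $(w, z)$ I set $G(\xi, s, z) \doteq \bs \lambda_1[\xi](\xi + s, z)$, so that $\frac{d}{d\xi}\bs \lambda_1[\xi](w, z) = \partial_\xi G(\xi, w-\xi, z) - \partial_s G(\xi, w-\xi, z)$. Evaluating at $s = 0$: $\partial_\xi G|_{s=0} = \partial_w \lambda_1(\xi, z) \geq \bar c$ by genuine nonlinearity, while $\partial_s G|_{s=0} = 0$ by the computation above, so $\frac{d}{d\xi}\bs \lambda_1[\xi](w, z)|_{w=\xi} \geq \bar c$. Since $\bs \Theta[\xi]$ and $\bs \Xi[\xi]$ are jointly smooth in $(\xi, w, z)$ and $\bs \Theta[\xi]$ is bounded below on the strip (by part (1)), $\frac{d}{d\xi}\bs \lambda_1[\xi]$ is continuous on the compact set $\{\xi \leq w \leq \xi + \bar r\}$; continuity plus compactness then yields, after possibly shrinking $\bar r$, a uniform bound $\frac{d}{d\xi}\bs \lambda_1[\xi](w, z) \geq \bar c/2$, proving (2).

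The main technical point is justifying the joint smoothness of $\bs \Theta[\xi]$ in the parameter $\xi$ (which shifts the corner of the Goursat problem) and the uniformity of the $O((w-\xi)^2)$ remainder in $(\xi, z)$; both follow from standard theory for linear hyperbolic Goursat problems with smooth, parameter-dependent boundary data. The analogous statement for $\wt{\bs \chi}$ follows by a symmetric argument based on the Goursat problem with data prescribed on the opposite characteristic.
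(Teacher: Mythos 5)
The paper offers no proof of this proposition---it is quoted from \cite{AMT25}, and ultimately rests on the analysis of the Goursat entropies in \cite{PT00}, \cite{Tza03}---so there is no in-paper argument to compare against line by line; your proof is correct and is essentially the standard one: (1) follows from the boundary value $\bs\Theta[\xi](\xi,z)=g(\xi,z)\ge c_0>0$ together with uniform continuity near the diagonal, and (2) from the two identities $\bs\Xi_w=\lambda_1\bs\Theta_w$ (the entropy--flux relation in Riemann invariants) and $\bs\Xi(\xi,\cdot)=\lambda_1(\xi,\cdot)\,\bs\Theta(\xi,\cdot)$ from \eqref{eq:efluxxi}, which force $\partial_w\bs\lambda_1[\xi]$ to vanish on $\{w=\xi\}$, so that the total $\xi$-derivative there reduces to $\partial_w\lambda_1\ge\bar c$ by genuine nonlinearity. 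The one point you delegate to ``standard theory''---joint $C^1$ dependence of $\bs\Theta[\xi]$, $\bs\Xi[\xi]$ on $(\xi,w,z)$, which you need both to make sense of $\tfrac{\dif}{\dif\xi}\bs\lambda_1[\xi]$ and to run the compactness/uniform-continuity step---is the real technical content of the statement and should at least be justified, e.g.\ via the Riemann-function representation of the Goursat solution or by differentiating the fixed-point integral equation after straightening the boundary with the change of variables $w\mapsto w-\xi$; with that supplied, the proof is complete.
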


A symmetric construction can be made for the entropies that can be cut along the second Riemann invariant; for these entropies, for $\zeta \in [{\ubar z}, \bar z]$, we let $\bs \upsilon[\zeta](w,z)$ be entropy corresponding to $\bs \chi[\xi](w,z)$, and $\bs \varphi[\zeta](w,z)$ for the respective entropy flux, corresponding to $\bs \psi[\xi](w,z)$.

In the following, given a function $\bs u: \mathbb R^+ \times \mathbb R \to \mc U$, we define 
\begin{equation}\label{eq:kfdef}
\begin{aligned}
    & \bs \chi_{\bs u}(t,x,\xi) \doteq \bs \chi[\xi](\bs {\bs u}(t,x)) \qquad   \forall \; (t, x, \xi) \in \Omega \times (\ubar w, \bar w), \\
    & \bs \upsilon_{\bs u}(t,x,\zeta) \doteq \bs \upsilon[\zeta](\bs {\bs u}(t,x)) \qquad   \forall \; (t, x, \zeta) \in \Omega \times (\ubar z, \bar z)
    \end{aligned}
\end{equation}
\begin{equation}\label{eq:kfdef1}
\begin{aligned}
    & \bs \psi_{\bs u}(t,x,\xi) \doteq \bs \psi[\xi](\bs {\bs u}(t,x)) \qquad   \forall \; (t, x, \xi) \in \Omega \times (\ubar w, \bar w), \\
    & \bs \varphi_{\bs u}(t,x,\zeta) \doteq \bs \varphi[\zeta](\bs {\bs u}(t,x)) \qquad   \forall \; (t, x, \zeta) \in \Omega \times (\ubar z, \bar z).
    \end{aligned}
\end{equation}

Notice that the support of $\bs \chi_{\bs u}, \bs \psi_{\bs u}$ is contained in the hypograph of the first Riemann invariant $w = \phi_1(\bs u)$, while the support of $\bs \upsilon_{\bs u}$, $\bs \varphi_{\bs u}$ is contained in the hypograph of the second Riemann invariant $z = \phi_2(\bs u)$.

\begin{prop}[\cite{AMT25}]\label{prop:kfeseq}
   Let $\Omega \subset \mathbb R^2$ be an open set and let $\bs u \in \mathbf L^\infty(\Omega, \, \mc U)$. Then $\bs u$ is a finite entropy solution to \eqref{eq:systemi} if and only if there are locally finite measures $\mu_1, \mu_0 \in \msc M(\Omega \times (\ubar w, \bar w))$ and $\nu_1, \nu_0 \in \msc M(\Omega \times (\ubar z, \bar z))$  such that 
    \begin{equation}\label{eq:kin22}
        \; \; \partial_t \bs \chi_{\bs u}(t,x,\xi) + \partial_x \bs \psi_{\bs u}(t,x,\xi) = \partial_{\xi} \mu_1 + \mu_0 \qquad \text{in $\msc D^\prime$}\big(\Omega \times ({\ubar w}, \bar w) \big)
    \end{equation}
     \begin{equation}\label{eq:kin221}
        \partial_t \bs \upsilon_{\bs u}(t,x,\zeta) + \partial_x \bs \varphi_{\bs u}(t,x,\zeta) = \partial_\zeta \nu_1 + \nu_0 \qquad \text{in $\msc D^\prime$}\big(\Omega \times ({\ubar z}, \bar z) \big)
    \end{equation}
   Moreover, $\bs u$ is an isentropic solution, i.e. for every smooth entropy pair $\eta, q$ it holds $\mu_\eta = 0$, if and only if 
    \begin{equation}\label{eq:kin22se}
        \partial_t \bs \chi_{\bs u}(t,x,\xi) + \partial_x \bs \psi_{\bs u}(t,x,\xi) = 0 \qquad \text{in $\msc D^\prime$}\big(\Omega \times \mathbb R \big)
    \end{equation}
     \begin{equation}\label{eq:kin221se}
        \partial_t \bs \upsilon_{\bs u}(t, x, \zeta)+ \partial_x \bs \varphi_{\bs u}(t, x, \zeta) =0 \qquad \text{in $\msc D^\prime$}\big(\Omega \times  \mathbb R \big)
    \end{equation}
\end{prop}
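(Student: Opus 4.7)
The plan is to leverage the representation of smooth entropies as superpositions of the singular entropies $\bs \chi[\xi]$ and $\bs \upsilon[\zeta]$ from \cite{PT00, Tza03}. That construction shows every $C^2$ entropy--entropy flux pair $(\eta, q)$ on $\mc W$ decomposes, modulo an affine (``trivial'') pair, as
$$
\eta(\bs u) = \int_{\ubar w}^{\bar w} \alpha(\xi) \, \bs \chi[\xi](\bs u) \, d\xi + \int_{\ubar z}^{\bar z} \beta(\zeta) \, \bs \upsilon[\zeta](\bs u) \, d\zeta,
$$
with smooth weights $\alpha, \beta$, and analogously for $q$ via $\bs \psi[\xi], \bs \varphi[\zeta]$. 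Conversely, any $\alpha \in C^\infty_c((\ubar w, \bar w))$ inserted above produces a bona-fide $C^2$ entropy. This is the duality that the proof exploits in both directions.

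\emph{Forward direction.} Assume $\bs u$ is a finite entropy solution. For each $\alpha \in C^\infty_c((\ubar w, \bar w))$ the entropy $\eta_\alpha := \int \alpha(\xi)\, \bs \chi[\xi]\, d\xi$ is of class $C^2$, so $\mu_{\eta_\alpha} \in \msc M(\Omega)$ is locally finite and Fubini yields
$$
\int \alpha(\xi)\, \bigl[\partial_t \bs \chi_{\bs u} + \partial_x \bs \psi_{\bs u}\bigr](\cdot,\xi)\, d\xi = \mu_{\eta_\alpha} \quad \text{in } \msc D^\prime(\Omega).
$$
Hence the distribution $T := \partial_t \bs \chi_{\bs u} + \partial_x \bs \psi_{\bs u}$ on $\Omega \times (\ubar w, \bar w)$, paired against a smooth $\xi$-weight, is a locally finite measure in $(t,x)$, with local mass bounded linearly in $\|\alpha\|_{C^1(\xi)}$. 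The single $\xi$-derivative cost arises because $\xi \mapsto \bs \chi[\xi]$ is only $BV$, due to the moving jump at $w=\xi$. A standard structural argument then produces the decomposition $T = \partial_\xi \mu_1 + \mu_0$ with $\mu_0, \mu_1 \in \msc M_{\mathrm{loc}}(\Omega \times (\ubar w, \bar w))$. The measures $\nu_0, \nu_1$ are obtained symmetrically from $\bs \upsilon[\zeta]$.

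\emph{Reverse direction and isentropic case.} Given \eqref{eq:kin22}--\eqref{eq:kin221}, one recovers $\mu_\eta = \partial_t \eta(\bs u) + \partial_x q(\bs u)$ for any $C^2$ pair $(\eta, q)$ as a locally finite linear combination of the kinetic dissipations by pairing with the smooth weights $\alpha, \beta$ appearing in the representation of $\eta$, proving $\bs u$ is a finite entropy solution. If, moreover, $\mu_\eta \equiv 0$ for all smooth $\eta$, the same pairing forces $T$ to vanish in $\msc D^\prime(\Omega \times (\ubar w, \bar w))$. To upgrade to all $\xi \in \mathbb R$, observe that $\bs \chi_{\bs u}(\cdot,\xi) = \bs \Theta[\xi](\bs u)$ for $\xi \le \ubar w$ (a smooth entropy, whose dissipation vanishes by isentropy) while $\bs \chi_{\bs u}(\cdot,\xi) \equiv 0$ for $\xi \ge \bar w$; in both regimes $\partial_t \bs \chi_{\bs u} + \partial_x \bs \psi_{\bs u} = 0$. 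The equation for $\bs \upsilon$ is handled symmetrically.

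The main obstacle is the structural step in the forward direction, namely showing that $T$ has distributional order at most one in $\xi$ with uniformly (locally) finite measure bound in $(t,x)$, and then separating the part of the dissipation that is ``smooth in $\xi$'' (which becomes $\mu_0$) from the part generated by the moving jump of $\bs \chi[\xi]$ at $w = \xi$ (which, after an integration by parts in $\xi$, becomes $\partial_\xi \mu_1$). The explicit Goursat construction of $\bs \Theta[\xi]$ together with the local positivity in Proposition \ref{prop:localspeed} make this bookkeeping quantitative and allow one to identify $\mu_0, \mu_1$ as genuine locally finite Radon measures.
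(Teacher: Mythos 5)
This proposition is imported verbatim from \cite{AMT25}: the paper gives no proof of it, so there is nothing internal to compare your argument against line by line. Your strategy — the duality between $C^2$ entropies and superpositions $\int\alpha(\xi)\,\bs\chi[\xi]\,d\xi+\int\beta(\zeta)\,\bs\upsilon[\zeta]\,d\zeta$ coming from the Goursat/Riemann-function construction of \cite{PT00,Tza03} — is indeed the mechanism behind the cited result, and your forward direction and the isentropic characterization are correct in outline.

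Two points deserve flagging. First, in the forward direction the phrase ``a standard structural argument'' hides the actual content of the proof. The finite-entropy hypothesis only says that each $\mu_{\eta}$ is \emph{some} locally finite measure; to get a bound of the form $|\mu_{\eta_\alpha}|(K)\le C_K\|\alpha\|_{C^1}$ you need a uniform boundedness (Banach--Steinhaus) argument on the linear map $\eta\mapsto\mu_\eta$ from $C^2(\mc U)$ to $\msc M_{loc}(\Omega)$ — this is exactly what makes \eqref{eq:supmueta} a measure — and you then need to pass from tensor-product test functions $\varphi(t,x)\alpha(\xi)$ to general $\phi(t,x,\xi)$ before the Hahn--Banach/Riesz step produces $T=\mu_0+\partial_\xi\mu_1$. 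These steps are doable and standard, but they are the proof, and your write-up only names them.

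Second, and more substantively, your reverse direction has a gap: a finite entropy solution is by Definition \ref{defi:fes} a \emph{weak solution} with finite dissipation, and your argument only recovers that $\mu_\eta$ is a locally finite measure for every $C^2$ pair. The representation you invoke holds \emph{modulo affine pairs} $(a\cdot\bs u+b,\,a\cdot f(\bs u)+c)$, whose dissipation is $a\cdot(\partial_t\bs u+\partial_x f(\bs u))$; precisely these are invisible to the kinetic equations \eqref{eq:kin22}--\eqref{eq:kin221}, so pairing with the weights $\alpha,\beta$ cannot force $\partial_t\bs u+\partial_x f(\bs u)=0$. You must either verify the conservation law separately (it does not follow from the stated hypotheses alone) or observe that the equivalence is to be read within the class of weak solutions. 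A smaller technical point in the same direction: for a general $C^2$ entropy the weights $\alpha,\beta$ need not vanish near $\ubar w,\bar w$, so pairing $\alpha'$ against $\mu_1$ requires finiteness of $\mu_1$ up to the boundary of the $\xi$-interval, which should be extracted from the forward construction rather than assumed.
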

\begin{remark}\label{rem:prop:kfeseqsym}
Defining
\begin{equation}\label{eq:kfdeftilde}
\begin{aligned}
    & \widetilde{\bs \chi}_{\bs u}(t,x,\xi) \doteq \widetilde{\bs \chi}[\xi](\bs u(t,x)), \quad  \widetilde{\bs \psi}_{\bs u}(t,x,\xi) \doteq \widetilde{\bs \psi}[\zeta](\bs u(t,x)), \quad   \forall \; (t, x, \xi) \in \Omega \times (\ubar w, \bar w)
    \end{aligned}
\end{equation}
and with obvious notation also $\widetilde{ \bs \upsilon}_{\bs u}$, $\widetilde{ \bs \varphi}_{\bs u}$
one can make a symmetric statement: in particular,
$\bs u$ is an isentropic solution if and only if (recall \eqref{eq:kfdeftilde})
    \begin{equation}
        \partial_t \widetilde{\bs \chi}_{\bs u}(t,x,\xi) + \partial_x \widetilde{\bs \psi}_{\bs u}(t,x,\xi) = 0 \qquad \text{in $\msc D^\prime$}\big(\Omega \times \mathbb R \big)
    \end{equation}
     \begin{equation}
        \partial_t \widetilde{\bs \upsilon}_{\bs u}(t, x, \zeta)+ \partial_x \widetilde{\bs \varphi}_{\bs u}(t, x, \zeta) =0 \qquad \text{in $\msc D^\prime$}\big(\Omega \times  \mathbb R \big)
    \end{equation}
\end{remark}

The measures $\mu_i$, $\nu_i$ are related to the dissipation of smooth entropies. In fact, one can show that if
\begin{equation}\label{eq:supmueta}
\bs \nu \doteq \bigvee_{\substack{\eta \in \mc E \\ |\eta|_{C^2} < 1}} \mu_{\eta} \; \in \; \msc M(\Omega).
\end{equation}
then $|\mu_i| \leq \bs \nu$ and $|\nu_i| \leq \bs \nu$, provided that $\mu_i, \nu_i$ are chosen so that there are not cancellations in the right hand side \eqref{eq:kin22}, \eqref{eq:kin221}.
Here $\bigvee$ denotes the supremum in the sense of measures (see \cite[Definition 1.68]{AFP00}) and $\mc E$ is the set of entropies $\eta : \mc U \to \mathbb R$ of class $C^2$, while $\mu_\eta$ is the  dissipation measure 
$$
\mu_\eta := \eta_t(\bs u) + q_x(\bs u).
$$

\begin{remark}
    We remark that the kinetic equations \eqref{eq:kin22}, \eqref{eq:kin221} are not a characterization of entropy solutions, unlike the kinetic equation that can be obtained in the scalar case \cite{LPT94a}, in the case of the Euler system of gas dynamics \cite{LPT94b} or elastodynamics \cite{PT00}. They are a characterization of the more general concept of finite entropy solution, by Proposition \ref{prop:kfeseq}.
\end{remark}

\section{Strong Time Regularity}\label{sec:strongtime}
In this section we prove Theorem \ref{thm:strongtime}.  We need some preliminary tools from \cite{AMT25}, which we recall in the following Subsection.

\subsection{Lagrangian tools}
Let $\bs u$ be an isentropic solution to \eqref{eq:systemi} defined in $\mathbb R^+ \times \mathbb R$ and let $w = \phi_1(\bs u)$. 
From now on, all the results will be stated for the first Riemann invariant $w = \phi_1(\bs u)$, but it is clear that symmetric statements hold for the other $z = \phi_2(\bs u)$.
Define $w_{\max} \doteq \esssup_{t,x}  w$ and $w_{\min} \doteq \essinf_{t,x} w.$
Following \cite[Section 4]{AMT25}, for $r > 0$ sufficiently small we define
\begin{equation}\label{eq:chibcdef}
\begin{aligned}
   &  \bs \chi^{\max}(t,x,\xi) \doteq \bs \chi_{\bs u}(t,x,\xi)\cdot \mathbf 1_{\{(t,x,\xi) \; | \; w_{\max}- r \leq \xi \leq  w_{\max}\}}(t,x,\xi) \geq 0
       \end{aligned}
\end{equation}
\begin{equation}
    \begin{aligned}
     & \bs \chi^{\min}(t,x,\xi) \doteq \bs {\wt\chi}_{\bs u}(t,x,\xi)\cdot \mathbf 1_{\{(t,x,\xi) \; | \; w_{\min}  \leq \xi \leq  w_{\min} +r\}}(t,x,\xi) \geq 0
    \end{aligned}
\end{equation}
We have a strict inequality $\bs \chi^{\max}(t,x,\xi) > c> 0$ and $\bs \chi^{\min}(t,x,\xi) > c> 0$ when $w_{\max} -r < \xi < w(t,x)$ and $w(t,x) < \xi < w_{\min} + r $ respectively, which is an immediate consequence of Proposition \ref{prop:localspeed} if we choose $r < \bar r$. Notice that 
$$
\mr{supp} \, \bs \chi^{\max} = \mr{hyp} \,  \phi_1(\bs u) \cap \Big(\mathbb R^+ \times \mathbb R \times (w^{\max}-r, w^{\max}) \Big)
$$
and similarly for $\bs \chi^{\min}$
$$
\mr{supp} \, \bs \chi^{\min} = \mr{epi} \,  \phi_1(\bs u) \cap \Big(\mathbb R^+ \times \mathbb R \times (w^{\min}, w^{\min}+r) \Big)
$$
where $\mr{hyp} \, \phi_1(\bs u)$ and $\mr{epi}\, \phi_1(\bs u)$ denote the hypograph and epigraph, respectively, of the function $\phi_1(\bs u)$.

We have by  Proposition \ref{prop:localspeed} and Proposition \ref{prop:kfeseq}, that
\begin{equation}\label{eq:chikin}
 \partial_t \bs \chi^{\max} + \partial_x( \bs \lambda_1[\xi](\bs u(t,x)) \bs \chi^{\max}(t,x,\xi)) = 0 \qquad \text{in $\msc D^\prime(\mathbb R^+ \times \mathbb R \times  \mathbb R)$}
\end{equation}
\begin{equation}
     \partial_t \bs \chi^{\min} + \partial_x (\bs \lambda_1[\xi](\bs u(t,x)) \bs \chi^{\min}(t,x,\xi))= 0 \qquad \text{in $\msc D^\prime(\mathbb R^+ \times \mathbb R \times  \mathbb R)$}.
\end{equation}
where $\bs \lambda_1[\xi](\bs u)$ is as in \eqref{eq:lambdadef}.
 We have the following result from \cite{AMT25}:

\begin{prop}\label{prop:lagr}
Let $\bs \chi^{\max}$ be as above, satisfying equation \eqref{eq:chikin}. Then there exists a positive measure $\bs \omega \in \msc M^+(\Gamma)$, where
$$
\Gamma = \Big\{\gamma = (\gamma_x, \gamma_\xi) : \mathbb R^+ \to \mathbb R^2, \qquad \gamma_x \quad \text{Lipschitz curve}, \qquad \gamma_\xi \in \mathbf L^\infty(\mathbb R^+) \Big\}
$$
such that 
\begin{enumerate}
    \item $\bs \omega$ is concentrated on curves $\gamma \in \Gamma$ such that
    \begin{enumerate}
    \item[$(a_1)$] $\gamma_\xi$ is a constant function $\gamma_{\xi}(t) \equiv \xi_{\gamma} \in \mathbb R$ for all $t \in \mathbb R^+$;
        \item[$(b_1)$] $\gamma_x$ is characteristic for $\bs \lambda_1[\xi](\bs u)$:
    \begin{equation}\label{eq:charomega}
    \dot \gamma_x(t) = \bs \lambda_1[\xi_{\gamma}](\bs u(t,x)) \qquad \text{for a.e. $t \in \mathbb R^+$}.
        \end{equation}
         \end{enumerate}
    \item Up to redefining $\bs \chi^{\max}$  on a set of times of measure zero, we can recover it by superposition of the curves:
    \begin{equation}\label{eq:chisuper}
\bs \chi^{\max}(t, \cdot, \cdot)\cdot \msc L^2 = (e_t)_\sharp \bs \omega  \qquad \text{for all $t \in \mathbb R^+$}.
        \end{equation}
    where $e_t : \Gamma \to \mathbb R^2$ is the evaluation map $e_t(\gamma) = \gamma(t)$.
    \item 
     For $\bs \omega$ almost every $\gamma = (\gamma_x, \xi_{\gamma}) \in\Gamma$, for $\msc L^1$-almost every $t \in \mathbb R^+$ it holds
    \begin{enumerate}
        \item[$(a_2)$] $(t, \gamma_x(t))$ is a Lebesgue point of $\bs u$;
        \item[$(b_2)$] it holds $w(t,\gamma_x(t))-r \leq \xi_{\gamma} \leq w(t,\gamma_x(t))$.
    \end{enumerate}
  
\end{enumerate}

Entirely similar considerations hold for $\bs \chi^{\min}$; we call $\bs \eta \in \msc M^+(\Gamma)$ the corresponding measure. In particular (3) is replaced by 
 \begin{enumerate}
        \item[(3')] For $\bs \eta$ almost every $\sigma = (\sigma_x, \xi_\sigma) \in \Gamma$, for $\mathscr L^1$-a.e. $t \in \mathbb R^+$, it holds
        \begin{enumerate}
            \item[($a_2^{\text{'}}$)] $(t,\sigma_x(t))$ is a Lebesgue point of $\bs u$,
        \item[($b_2^{\text{'}}$)]  it holds $w(t,\sigma_x(t)) \leq \xi_{\sigma} \leq w(t,\sigma_x(t))+r$.
         \end{enumerate}
    \end{enumerate}
\end{prop}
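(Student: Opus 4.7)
My plan is to derive the proposition from a superposition principle for positive measure-valued solutions of continuity equations.

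\smallskip

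\emph{Step 1 (reformulation as a continuity equation).} I would first re-read equation \eqref{eq:chikin} as a continuity equation in the enlarged phase space $(x,\xi)\in\mathbb R^2$, with time-dependent Borel vector field
$$
V(t,x,\xi) \doteq \bigl(\bs\lambda_1[\xi](\bs u(t,x)),\;0\bigr).
$$
By strict hyperbolicity \eqref{eq:hyperb} and the definition \eqref{eq:lambdadef}, $V$ is globally bounded, and $\bs\chi^{\max}$ is a bounded nonnegative density, compactly supported in $\xi$ inside $[w_{\max}-r,w_{\max}]$ (so in particular the integrability condition $\int|V|\,\bs\chi^{\max}\,\dif x\,\dif\xi\,\dif t<\infty$ is trivially satisfied on compact time intervals).

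\smallskip

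\emph{Step 2 (superposition principle).} At this point I would invoke the (weak) superposition principle of Ambrosio (as in the version discussed in Ambrosio--Crippa's lecture notes), which does not require well-posedness of the ODE: it produces a positive measure $\bs\omega\in\msc M^+(\Gamma)$ concentrated on absolutely continuous integral curves of $V$ such that $(e_t)_\sharp\bs\omega=\bs\chi^{\max}(t,\cdot,\cdot)\,\msc L^2$ for almost every $t$. Since the $\xi$-component of $V$ is identically zero, $\gamma_\xi$ is constant in $t$ along $\bs\omega$-a.e.\ curve, giving $(a_1)$; the $x$-component of the ODE is exactly $(b_1)$. For (2), using that $\bs\chi^{\max}$ solves a continuity equation with bounded velocity, the map $t\mapsto\bs\chi^{\max}(t,\cdot,\cdot)\msc L^2$ admits a weakly-$*$ continuous representative, so the identity can be upgraded to hold for every $t$.

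\smallskip

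\emph{Step 3 (properties along curves).} Property $(b_2)$ I would obtain by a Fubini/support argument: for a.e.\ $t$, the density $\bs\chi^{\max}(t,\cdot,\cdot)$ vanishes outside
$$
S_t \doteq \bigl\{(x,\xi):\; w(t,x)-r\leq\xi\leq w(t,x)\bigr\},
$$
because $\bs\chi_{\bs u}$ is supported in the hypograph of $w$ and we cut at $\xi\geq w_{\max}-r$. Therefore, pushing $\bs\omega\otimes\msc L^1$ forward under $(\gamma,t)\mapsto(t,\gamma_x(t),\xi_\gamma)$ yields a measure absolutely continuous with respect to $\msc L^3$, concentrated on $\bigcup_t\{t\}\times S_t$; by Fubini, $\bs\omega$-a.e.\ curve has $\gamma(t)\in S_t$ for a.e.\ $t$, which is $(b_2)$. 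Property $(a_2)$ follows by the same pushforward argument: the complement of the Lebesgue set of $\bs u$ is $\msc L^2$-negligible in $\mathbb R^+\times\mathbb R$, hence its pre-image under the above map is $(\bs\omega\otimes\msc L^1)$-negligible.

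\smallskip

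\emph{Step 4 (symmetric case).} The statement for $\bs\chi^{\min}$ is obtained by an entirely parallel argument, using $\wt{\bs\chi}$ (supported in the epigraph of $w$) in place of $\bs\chi$ and replacing the range condition accordingly.

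\smallskip

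The main technical obstacle is the low regularity of $V$: since $\bs u$ is merely $\mathbf L^\infty$, $V$ is only bounded and Borel, so one cannot appeal to the DiPerna--Lions or Ambrosio uniqueness theory. This is precisely why one must use the \emph{weak} superposition principle (non-uniqueness of integral curves is admitted) and why the strict positivity estimate in Proposition \ref{prop:localspeed}(1), guaranteeing that $\bs\chi^{\max}>c>0$ on the relevant set, is crucial to make sense of the Lagrangian decomposition and to later exploit the monotonicity of $\bs\lambda_1[\xi]$ in $\xi$.
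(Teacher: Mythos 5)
Your proposal is correct and takes essentially the same route as the paper, which does not prove Proposition~\ref{prop:lagr} in detail but refers to \cite{AMT25} and notes only that the result follows from the Ambrosio superposition principle for the continuity equation \cite{Amb08} --- precisely your Steps 1--2, with your Step 3 Fubini/pushforward arguments being the standard way $(a_2)$, $(b_2)$ are then deduced. The one point worth making explicit is that $\bs \chi^{\max}(t,\cdot,\cdot)\,\msc L^2$ need not have finite total mass, so one should invoke a version of the superposition principle valid for locally finite measures (or localize using the finite speed of propagation coming from the boundedness of $\bs \lambda_1$), but this is a routine adjustment.
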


\begin{defi}
      We let $\Gamma^{\max}$ be the set of curves where $\bs \omega$ is concentrated (i.e. those satisfying $(a_1), (b_1)$), that additionally satisfy $(a_2), (b_2)$. Similarly, we let $\Gamma^{\min}$ be a set of curves where $\bs \eta$ concentrated (i.e. satisfying $(a_1), (b_1)$, satisfying also $(a_2^{\text{'}}), (b_2^{\text{'}})$.
\end{defi}

\begin{remark}\label{rem:suppwchi}
We remark that from \eqref{eq:chisuper} the Riemann invariant $w$ and $\bs \omega$ are related by the following: for a.e. $(t,x)$ such that $w(t,x) > w_{\max}-r$, there holds
$$
w(t,x) = \sup \{\xi_\gamma \; | \; \gamma \in \Gamma^{\max}, \quad \gamma_x(t) = x\}
$$
and similarly for the other Riemann invariant.
\end{remark}

We refer to \cite{AMT25} for more details and the proof of the Proposition, let us only mention that it follows from the application of the Ambrosio superposition principle for the continuity equation \cite{Amb08}.

\begin{remark}\label{rem:loclagr}
    By a localization argument, the Lagrangian representation $\bs \omega$ can be obtained for isentropic solutions in a localized domain
    $$
    \Omega := \{(t,x) \; | \; y_1(t) < x < y_2(t)\}
    $$
    where $y_1< y_2: \mathbb R^+ \to \mathbb R$ are Lipschitz curves,
    provided that $\bs \chi^{\max}$
    has no flux through the boundaries $y_1, y_2$, i.e. that 
    \begin{equation}\label{eq:localsol}
        \int \int_{\overline \Omega} \varphi_t \bs \chi^{\max} + \varphi_{x}  \bs \lambda_1[\xi](\bs u) \bs \chi^{\max} \dif t \dif x \dif \xi = 0 \qquad \text{for all $\varphi \in C^1_c(\overline \Omega\times \mathbb R)$}, \quad \mr{supp} \, \varphi \subset \{t > 0\}.
    \end{equation}
    In fact, \eqref{eq:localsol} implies that the extended function 
    $$
    \widehat{\bs \chi}^{\max}(t,x, \xi) := \begin{cases}
        \bs \chi^{\max}(t,x, \xi) & \text{if $(t,x) \in \Omega$}\\
        0 & \text{outside}
    \end{cases}
    $$
    solves \eqref{eq:chikin} in the whole $\mathbb R^+\times \mathbb R^2$, so that Proposition \ref{prop:lagr} can be applied. It is then immediate to see that in this case Proposition \ref{prop:lagr} holds with
    $$
\Gamma = \Big\{\gamma = (\gamma_x, \gamma_\xi) : \mathbb R^+ \to \mathbb R^2, \quad \gamma_x \quad \text{Lipschitz}, \quad \gamma_\xi \in \mathbf L^\infty(\mathbb R^+), \quad y_1(t) \leq \gamma_x(t) \leq y_2(t)\Big\}.
$$
The same holds for $\bs \eta$, $\bs \chi^{\min}$.
\end{remark}

The following proposition is an immediate consequence of a result in \cite{AMT25}, we provide its proof below.

\begin{prop}\label{prop:inter}
 Let $\bs u :  \Omega \to \mc U$ be an isentropic solution, where 
 $$
 \Omega = \{(t,x) \; | \;  x \leq y_2(t)\}, \qquad y_2: \mathbb R^+ \to \mathbb R \quad \text{Lipschitz}.
 $$
Then there does not exist $r > 0$, with $r < \bar r$, such that
$$
\bs \chi^{\max}(t,x,\xi) := \bs \chi_{\bs u}(t,x,\xi) \cdot \mathbf 1_{\{(t,x,\xi) \; | \; w_{\max}- r \leq \xi \leq  w_{\max}\}}(t,x,\xi)
$$
 satisfies \eqref{eq:localsol}, where
$$
w_{\max} := \esssup_{t,x \in \Omega} \phi_1(\bs u).
$$ 
\end{prop}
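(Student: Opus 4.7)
The plan is to argue by contradiction, reducing to the Lagrangian representation of Proposition \ref{prop:lagr}. Suppose that some $r\in (0,\bar r)$ exists for which \eqref{eq:localsol} holds. By Remark \ref{rem:loclagr}, the extension $\widehat{\bs \chi}^{\max}$ by zero outside $\Omega$ satisfies the kinetic transport equation \eqref{eq:chikin} on all of $\mathbb R^+\times\mathbb R\times\mathbb R$, so Proposition \ref{prop:lagr} produces a positive measure $\bs \omega\in\msc M^+(\Gamma)$ concentrated on characteristic curves $\gamma=(\gamma_x,\xi_\gamma)$ with $\gamma_x(t)\leq y_2(t)$ for all $t\geq 0$ and $\dot\gamma_x(t)=\bs \lambda_1[\xi_\gamma](\bs u(t,\gamma_x(t)))$ for a.e.\ $t$.

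The two structural inputs I would then combine are the strict monotonicity $\frac{\dif}{\dif\xi}\bs \lambda_1[\xi]\geq c>0$ from Proposition \ref{prop:localspeed}(2), and the reconstruction formula of Remark \ref{rem:suppwchi}, which identifies $w(t,x)$ with the supremum of the $\xi$-labels of the curves of $\bs \omega$ passing through $(t,x)$. Since $w_{\max}=\esssup_\Omega\phi_1(\bs u)$, the support of $\bs \omega$ in the $\xi$ variable must accumulate at $w_{\max}$: for every $\epsilon>0$ the set $\{\xi_\gamma>w_{\max}-\epsilon\}$ carries strictly positive $\bs \omega$-mass. Moreover, the strict monotonicity rules out crossings between characteristics of $\bs \omega$ carrying distinct labels, so at any fixed time the rightmost curves are exactly those carrying the largest $\xi$-labels.

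The main obstacle is to extract an outright contradiction from this structural picture. The natural candidate is to follow the rightmost characteristic $x^\ast(t)\doteq \sup\{\gamma_x(t):\gamma\in \mr{supp}(\bs \omega)\}$, whose associated label must equal $w_{\max}$ by the previous accumulation argument and the non-crossing property; condition $(b_2)$ of Proposition \ref{prop:lagr} then forces $w(t,x^\ast(t))=w_{\max}$ a.e., so that $x^\ast$ propagates at the strictly maximal kinetic speed $\bs \lambda_1[w_{\max}](\bs u(t,x^\ast(t)))$. A blow-up/rigidity analysis along $x^\ast$, of the type developed in \cite{AMT25} (and in the spirit of \cite{Vas99}), is then used to see that such an extremal configuration is incompatible with the confinement $x^\ast(t)\leq y_2(t)$ imposed by the Lipschitz boundary: characteristics with label slightly below $w_{\max}$ must still carry positive $\bs \omega$-mass while, by the uniform monotonicity constant $c>0$, lagging strictly behind $x^\ast$ and eventually crossing $y_2$ in finite time, contradicting \eqref{eq:localsol}. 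This blow-up step, which borrows the substantive content of the corresponding result of \cite{AMT25}, is the essential ingredient; the rest is bookkeeping of the Lagrangian representation.
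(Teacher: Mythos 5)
Your setup is the same as the paper's: argue by contradiction, extend $\bs \chi^{\max}$ by zero via Remark \ref{rem:loclagr}, and invoke the Lagrangian representation of Proposition \ref{prop:lagr}, noting that the labels $\xi_\gamma$ accumulate at $w_{\max}$. After that point, however, the argument goes off track. First, the structural claim that strict monotonicity of $\xi\mapsto \bs \lambda_1[\xi]$ \emph{rules out} crossings between characteristics with distinct labels is backwards: at a common point the curve with the larger label moves strictly faster, so crossings do occur --- monotonicity only makes them one-way (a higher-label curve overtakes a lower-label one and never falls back behind it). Consequently the asserted ordering ``the rightmost curves are exactly those carrying the largest labels'' is unjustified and false in general. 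Second, the proposed contradiction mechanism does not close: characteristics lagging behind $x^\ast(t)\le y_2(t)$ remain to the left of $y_2$, so there is no reason any of them ``eventually crosses $y_2$''; nothing forces a relation between the speed of the Lipschitz boundary $y_2$ and the kinetic speeds. The essential step is thus delegated to an unspecified ``blow-up/rigidity analysis,'' which is not the ingredient used here (blow-ups appear later, in the proof of Theorem \ref{thm:strongtime}).

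The actual contradiction is quantitative and exploits precisely the crossings you excluded. Fix a curve $\bar\gamma\in\Gamma^{\max}$ with label $b:=\xi_{\bar\gamma}>w_{\max}-r=:a$ and consider
$$
\mc Q(t)=\int_a^b\int_{\bar\gamma_x(t)}^{y_2(t)}\bs \chi^{\max}(t,x,\xi)\,\dif x\,\dif\xi,
$$
the kinetic mass at levels below $b$ lying between $\bar\gamma$ and the boundary. By genuine nonlinearity $\bar\gamma$ travels strictly faster than the mass at levels $\xi<b$ that it meets, and by Proposition \ref{prop:localspeed}(1) there is a definite density $\bs \chi\geq c>0$ of such mass near the graph of $w$ along $\bar\gamma$; since no mass exits through $y_2$ by hypothesis \eqref{eq:localsol}, one gets $\mc Q(t)-\mc Q(0)\le -Ct$ for all $t>0$ (this is \cite[Proposition 5.1]{AMT25}). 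As $\mc Q(0)<+\infty$ and $\mc Q(t)\ge 0$, this is the desired contradiction. Your write-up is missing this decay functional, which is the substantive content of the proof.
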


\begin{proof}
   By contradiction, assume that there exists such an $r > 0$. Then, by Remark \ref{rem:loclagr}, we can consider its Lagrangian representation $\bs \omega$, with corresponding set of curves $\Gamma^{\max}$. Next, let $\bar \gamma$ be a curve in $\Gamma^{\max}$ such that
\begin{equation}\label{eq:contrhyp}
 b \doteq \xi_{\bar \gamma} > w_{\max}-r  \doteq a 
\end{equation}
Define
\begin{equation}\label{eq:Qdef}
    \mc Q(t) \doteq \int_{a}^{b} \int_{\bar \gamma_x(t)}^{y_2(t)} \bs \chi^{\max}(t,x,\xi) \dif x \dif  \xi, \qquad t \geq 0.
\end{equation}
Then, proceeding exactly as in \cite{AMT25}, Proposition 5.1, we deduce that there is $C>0$ such that for every $t > 0$ it holds
\begin{equation}
    \mc Q(t)  - \mc Q(0) \leq -t \, C 
\end{equation}
This yields a contradiction since $\mc Q(0)$ is finite and $\mc Q(t)$ is positive for all $t > 0$.
\end{proof}

\subsection{Proof of the time regularity} We are now ready to prove Theorem \ref{thm:strongtime}. We will use some preliminary results.
 The next Lemma will be used later (for the proof see e.g. \cite[Lemma 5]{Mar22}).
\begin{lemma}\label{lemma:traces}
Assume that $\bar \sigma_x:(t_1,t_2)\subset \mathbb R\to \R$ is a Lipschitz curve and that for $\msc L^1$-a.e. $t \in (t_1,t_2)$ the point $(t, \bar \sigma_x(t))$ is a Lebesgue point of $w \in \mathbf L^\infty(\mathbb R^2; \,\mathbb R)$. Then
$$
\lim_{\delta \to 0} \int_{t_1}^{t_2} \frac{1}{\delta}\int_{\bar \sigma_x(t)-\delta}^{\bar \sigma_x(t)+\delta}|w(t,x)-w(t,\bar \sigma_x(t))|\dif x \dif t = 0.
$$
\end{lemma}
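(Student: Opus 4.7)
The approach is approximation by mollification combined with dominated convergence. First, by the bi-Lipschitz change of variables $y = x - \bar\sigma_x(t)$, which preserves $\mathbf L^\infty$ functions and Lebesgue points up to a null set, we may assume $\bar\sigma_x \equiv 0$ and reduce to proving
\begin{equation*}
\lim_{\delta \to 0^+} \int_{t_1}^{t_2} \frac{1}{\delta}\int_{-\delta}^{\delta} |w(t, y) - w(t, 0)| \dif y \dif t = 0,
\end{equation*}
where $w(t, 0)$ denotes the Lebesgue value, well-defined for a.e.\ $t$ by hypothesis.

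Let $w_n := w \ast \rho_{1/n}$ be a standard mollification, so that $w_n$ is smooth with $\|w_n\|_\infty \leq \|w\|_\infty$, and $w_n(t, 0) \to w(t, 0)$ for a.e.\ $t \in (t_1, t_2)$ by the standard property that mollifications converge to the Lebesgue value at Lebesgue points. The triangle inequality
\begin{equation*}
|w(t, y) - w(t, 0)| \leq |w - w_n|(t, y) + |w_n(t, y) - w_n(t, 0)| + |w_n - w|(t, 0)
\end{equation*}
splits the integral into three pieces: a tube error $A_n(\delta)$, a continuous-model error $I_n(\delta)$, and a trace error $B_n$. For each fixed $n$, $I_n(\delta) \to 0$ as $\delta \to 0$ by the uniform continuity of $w_n$ on the compact set containing the tube. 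The trace error $B_n = 2\int_{t_1}^{t_2}|w_n(t, 0) - w(t, 0)|\dif t$ tends to $0$ as $n \to \infty$ by the dominated convergence theorem, the integrand being bounded by $2\|w\|_\infty$ and converging pointwise almost everywhere.

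The main obstacle is the tube error $A_n(\delta) = \frac{1}{\delta}\int_{-\delta}^{\delta} G_n(y) \dif y$, with $G_n(y) := \int_{t_1}^{t_2} |w - w_n|(t, y) \dif t$: the limits $\delta \to 0^+$ and $n \to \infty$ must be interleaved carefully, since for fixed $n$ the function $G_n$ need not be continuous at $0$. By Fubini applied to the set of $2$D Lebesgue points of $w$, for a.e.\ $y \in \mathbb R$ the line $\{(t, y) : t \in (t_1, t_2)\}$ consists of Lebesgue points of $w$ for a.e.\ $t$, so $G_n(y) \to 0$ as $n \to \infty$ for a.e.\ $y$; combined with the uniform bound $\|G_n\|_\infty \leq 2\|w\|_\infty (t_2 - t_1)$, this gives $\|G_n\|_{\mathbf L^1(-1, 1)} \to 0$ by dominated convergence. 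To close the argument, one couples $n$ and $\delta$ via a diagonal extraction, or, more robustly, replaces the mollifier scale $1/n$ by a radius $r = r(\delta) \sim \delta$ and covers the tube by $\mathcal O(1/\delta)$ two-dimensional balls of radius $\sim \delta$ centered on the curve; the tube error is then dominated by a sum of $2$D ball averages of $|w - w_r|$, each of which vanishes as $\delta \to 0$ thanks to the Lebesgue point hypothesis along the curve.
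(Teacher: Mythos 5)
The paper does not prove this lemma itself (it cites \cite[Lemma 5]{Mar22}), so the comparison is with a correct proof rather than with the paper's text. Your setup is fine: the shear $(t,x)\mapsto(t,x-\bar\sigma_x(t))$ is bi-Lipschitz with unit Jacobian, so it preserves Lebesgue points and values, and of your three terms the continuous-model error $I_n(\delta)$ (fixed $n$, $\delta\to0$) and the trace error $B_n$ ($n\to\infty$) are correctly handled. But you have correctly located the crux in the tube error $A_n(\delta)$ and then not actually resolved it, and neither of your two proposed fixes works as stated. \emph{Diagonal extraction:} for fixed $n$ nothing prevents $\limsup_{\delta\to0}A_n(\delta)$ from being of order $\|w\|_\infty(t_2-t_1)$, so you would need $n(\delta)\to\infty$ fast enough that $A_{n(\delta)}(\delta)\to0$ while slow enough that $I_{n(\delta)}(\delta)\to0$; the latter forces $n(\delta)\,\delta\to0$ (since $\mathrm{Lip}(w_n)\sim n$), and no quantitative lower bound on how fast $n$ must grow for $A_n(\delta)$ to be small is ever established --- the diagonal is a restatement of the problem, not a solution. \emph{The $r(\delta)\sim\delta$ variant:} here the middle term breaks, because $|w_r(t,y)-w_r(t,0)|\le \mathrm{Lip}(w_r)|y|\sim \delta^{-1}\delta=O(1)$ on the window $|y|\le\delta$, so the ``continuous-model error'' is merely bounded, not small. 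In addition, the ball averages $\fint_{B_{C\delta}(p_i^\delta)}|w-w(p_i^\delta)|$ do vanish for each \emph{fixed} Lebesgue point, but your centers form a $\delta$-net of the curve that changes with $\delta$, and you take an average of $\mathcal O(1/\delta)$ such quantities; pointwise convergence at each center does not give convergence of the average without a uniformization step.

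A complete argument must couple the $t$-integration with the Lebesgue-point hypothesis \emph{on the curve}. One clean route: replace $F(t):=\delta^{-1}\int_{\bar\sigma_x(t)-\delta}^{\bar\sigma_x(t)+\delta}|w(t,x)-w(t,\bar\sigma_x(t))|\dif x$ by its time average $\fint_{t-\delta}^{t+\delta}F(s)\dif s$ (Fubini, up to boundary terms); by the Lipschitz bound on $\bar\sigma_x$ the resulting two-dimensional domain lies in $B_{(2+L)\delta}(t,\bar\sigma_x(t))$, so the averaged quantity is dominated by
$$
C\fint_{B_{C\delta}(t,\bar\sigma_x(t))}|w-w(t,\bar\sigma_x(t))| \;+\; \fint_{t-\delta}^{t+\delta}\big|w(s,\bar\sigma_x(s))-w(t,\bar\sigma_x(t))\big|\dif s ,
$$
and both terms tend to $0$ for a.e.\ $t$ (the first by the hypothesis, the second at Lebesgue points of the bounded measurable trace $t\mapsto w(t,\bar\sigma_x(t))$), so dominated convergence concludes. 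Alternatively one can run your covering argument with centers chosen as near-minimizers of $t\mapsto\fint_{B_{C\delta}(t,\bar\sigma_x(t))}|w-w(t,\bar\sigma_x(t))|$ on each interval, or invoke Egorov to get uniformity; either way, the comparison between $w(t,\bar\sigma_x(t))$ and the value at the chosen center must be estimated explicitly, which your sketch omits. As written, the proposal identifies the difficulty but does not close it.
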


In the scalar case $f(u) = u^2$, if a solution $u: \mathbb R^+ \times \mathbb R \to \mathbb R$ is isentropic, then $u(t, \cdot)$ must be an increasing function for all $t >0$. The following Lemma is a weaker property of this type which remains valid in the setting of $\mathbf L^\infty$ solutions to genuinely nonlinear $2\times 2$ systems.
\begin{lemma}\label{lemma:lrsol}
    Let $\bs u: \mathbb R^+ \times \mathbb R \to \mc U$ be an isentropic solution and let $w = \phi_1(\bs u)$.
    Then for every $T > 0$
\begin{equation}\label{eq:-inf}
\lim_{M \to -\infty} \Bigg( \esssup_{(t,x) \in (0,T)\times (-\infty, M) } w \Bigg) \; = \;  w_{\min}
\end{equation}
and
\begin{equation}\label{eq:+inf}
\lim_{M \to +\infty}\Bigg( \essinf_{(t,x) \in (0, T) \times (M, +\infty) } w \Bigg)\; =\; w_{\max}
\end{equation}
\end{lemma}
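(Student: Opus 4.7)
The plan is to argue by contradiction combining a space-translation blow-up with Proposition \ref{prop:inter}. Since $M \mapsto \esssup_{(0,T) \times (-\infty, M)} w$ is monotone nondecreasing in $M$, the limit in \eqref{eq:-inf} exists and lies in $[w_{\min}, w_{\max}]$; call it $w^*$. I will assume $w^* > w_{\min}$ and derive a contradiction with Proposition \ref{prop:inter}. The second identity \eqref{eq:+inf} is handled by the symmetric construction based on Remark \ref{rem:prop:kfeseqsym}, after swapping the roles of left and right boundaries.

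First I would fix $\epsilon > 0$ with $3\epsilon < \min\{\bar r, w^* - w_{\min}\}$. By definition of $w^*$, for every $M$ the set $\{(t, x) \in (0, T) \times (-\infty, M) : \phi_1(\bs u)(t, x) > w^* - \epsilon\}$ has positive Lebesgue measure. Pick a sequence of Lebesgue points $(t_n, x_n)$ of $\phi_1(\bs u)$ belonging to this set with $x_n \to -\infty$, and set $\bs u_n(t, x) := \bs u(t, x + x_n)$. Each $\bs u_n$ is again an isentropic $\mathbf L^\infty$ solution. Invoking the $\mathbf L^1_{loc}$ compactness of GNL $2\times 2$ isentropic solutions (via compensated compactness, which is available thanks to the trivial dissipation and the kinetic formulation of Proposition \ref{prop:kfeseq}), I extract a subsequential strong limit $\bs u_\infty$, itself an isentropic solution on $\mathbb R^+ \times \mathbb R$. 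The bound $\esssup_{(0,T) \times (-\infty, \max K + x_n)} \phi_1(\bs u) \to w^*$ gives $w_\infty := \phi_1(\bs u_\infty) \leq w^*$ a.e.\ on $(0, T) \times \mathbb R$, and a Lebesgue-differentiation argument combined with strong $\mathbf L^1_{loc}$ convergence shows that $w_\infty = w^*$ on a set of positive measure, localized near the accumulation point of $(t_n, 0)$.

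The core of the argument is then to localize $\bs u_\infty$ to a domain $\Omega := \{(t, x) : t > 0,\; x \leq y_2(t)\}$ with $y_2$ Lipschitz, chosen so that the set $\{w_\infty = w^*\}$ is contained in $\Omega$. In particular $\esssup_\Omega w_\infty = w^*$, and Proposition \ref{prop:inter} applies to $\bs u_\infty|_\Omega$. If I can choose $r \in (0, \bar r)$ small enough that the super-level set $\{w_\infty \geq w^* - r\} \cap \Omega$ remains at positive distance from the boundary $\{x = y_2(t)\}$, then $\bs \chi^{\max}$ vanishes near this boundary; by the extension-by-zero argument of Remark \ref{rem:loclagr}, the no-flux condition \eqref{eq:localsol} holds for that $r$, directly contradicting Proposition \ref{prop:inter}.

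The main obstacle will be precisely the existence of such a pair $(y_2, r)$: a priori the super-level set $\{w_\infty \geq w^* - r\}$ may spread to arbitrarily large $x$, so no static choice of $y_2$ confines its support. Overcoming this will require either (a) iterating the translation blow-up in order to further compactify the support of $w_\infty$ in space, or (b) replacing the constant boundary $y_2 \equiv M_0$ by a Lipschitz curve that tracks the Lagrangian drift of $\bs \chi^{\max}$: by genuine nonlinearity and Proposition \ref{prop:localspeed}, the kinetic speeds $\bs \lambda_1[\xi]$ at $\xi$ near $w^*$ are strictly larger than those at $\xi$ slightly below, so the curves of $\Gamma^{\max}$ supporting $\bs \chi^{\max}$ drift rigidly to the right and can be enclosed by a $t$-dependent $y_2(t)$ of controlled Lipschitz constant — reproducing, at the end, the decay of the functional $\mc Q$ used in the proof of Proposition \ref{prop:inter}.
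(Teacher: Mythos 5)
There are two genuine gaps in your argument, and the second one is exactly the point where the paper's proof does its real work.

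First, the blow-up step does not deliver what you claim. From $w^* > w_{\min}$ you only know that for every $M$ the set $A_\epsilon \cap \big((0,T)\times(-\infty,M)\big)$, with $A_\epsilon := \{w > w^*-\epsilon\}$, has \emph{positive} measure; there is no lower bound on $\msc L^2\big(A_\epsilon \cap ((0,T)\times(x_n-R,x_n+R))\big)$ as $x_n \to -\infty$. If this mass tends to zero in every moving window (thinner and thinner spikes), then the strong $\mathbf L^1_{loc}$ limit $\bs u_\infty$ satisfies $w_\infty \leq w^*-\epsilon$ a.e., and your claim that ``$w_\infty = w^*$ on a set of positive measure'' fails: the essential supremum is not stable under $\mathbf L^1_{loc}$ convergence when the superlevel sets have vanishing measure. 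A single Lebesgue point per $n$ is not enough, because the Lebesgue-point quality is not uniform in $n$.

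Second, and more importantly, the localization step — finding a Lipschitz $y_2$ and $r<\bar r$ for which the no-flux condition \eqref{eq:localsol} holds — is left as an acknowledged ``main obstacle'' with only speculative remedies, neither of which works as stated: the superlevel set $\{w_\infty \geq w^*-r\}$ need not stay at positive distance from any fixed boundary, and Proposition \ref{prop:localspeed} gives monotonicity of $\bs\lambda_1[\xi]$ in $\xi$, not a rigid rightward drift that a curve $y_2(t)$ could outrun. The paper closes this gap without any blow-up: it takes the boundary $y_2 = \bar\sigma_x$ to be a Lagrangian curve $\bar\sigma \in \Gamma^{\min}$ from the representation of $\bs\chi^{\min}$ (Proposition \ref{prop:lagr}), so that $\xi_{\bar\sigma}$ is as close to $w_{\min}$ as desired and, by property $(b_2')$, the trace of $w$ along $\bar\sigma_x$ satisfies $w(t,\bar\sigma_x(t)) \leq \xi_{\bar\sigma}$ at a.e.\ $t$, with $(t,\bar\sigma_x(t))$ a Lebesgue point. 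Since the localized entropy $\bs\chi_\ell^{\max}$ is supported where $w > \xi > \xi_{\bar\sigma} \geq w(t,\bar\sigma_x(t))$, the flux through the boundary is controlled by $\frac1\varepsilon\int\int_{\bar\sigma_x(t)-2\varepsilon}^{\bar\sigma_x(t)-\varepsilon}|w(t,x)-w(t,\bar\sigma_x(t))|$, which vanishes by the trace Lemma \ref{lemma:traces}; Proposition \ref{prop:inter} then gives the contradiction, and sending $\xi_{\bar\sigma^n}\to w_{\min}$ (using the finite speed $|\dot{\bar\sigma}_x|\le L$ to convert the moving boundary into a half-line) yields \eqref{eq:-inf}. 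Without identifying these minimal Lagrangian curves as the barriers, your argument cannot be completed along the lines you propose.
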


\begin{proof}
  Let $\Gamma^{\min}, \Gamma^{\max}$ be the set of curves of Proposition \ref{prop:lagr} for the solution $\bs u$, and $\bs \chi^{\max}$, $\bs \chi^{\min}$ defined as in \eqref{eq:chibcdef}. Take any $\bar \sigma \in \Gamma^{\min}$ and define
   $$
   \Omega_{\bar \sigma} := \{(t,x) \; | \; x \leq \bar \sigma_x(t)\}, \quad \bs v := \bs u_{\mid \Omega_{\bar \sigma}}.
   $$
 By contradiction, assume that
    \begin{equation}\label{eq:claimv}
         w^\ell_{\max}:= \esssup_{\Omega_{\bar \sigma}} \phi_1(\bs v) > \bar \sigma_{\xi}.
    \end{equation}   
Let 
\begin{equation}\label{eq:chielldef}
\bs \chi_{\ell}^{\max}(t, x, \xi) := \bs \chi_{\bs v}(t,x,\xi)\mathbf 1_{\{(t,x,\xi) \; | \;  w^\ell_{\max}- \wt r \leq \xi \leq   w^\ell_{\max}\}}
\end{equation}
where $\wt r$ is so small that $w^{\ell}_{\max} - \wt r > \bar \sigma_{\xi}$, where we recall that $\bs \chi_{\bs v}$ is just
$$
\bs \chi_{\bs v} = (\bs \chi_{\bs u})_{\mid \Omega_{\bar \sigma} }.
$$
Since  $\bar \sigma \in \Gamma^{\min}(\bs u)$, by (3') of Proposition \ref{prop:lagr} we obtain that $(t, \bar \sigma(t))$ is a Lebesgue point of $w$ for a.e. $t > 0$ and that
$$
w(t, \bar \sigma(t)) < \bar \sigma_\xi \qquad \text{for a.e. $t > 0$}.
$$

\vspace{0.3cm}
Next, we claim that $\bs \chi_{\ell}^{\max}$ has no flux through the lateral boundary of $\Omega_{\bar \sigma}$, in particular that it satisfies \eqref{eq:localsol}. This will yield a contradiction, by Proposition \ref{prop:inter}, thus proving the result.

To prove the claim, we first notice that since $\bs \chi_{\bs u}$ satisfies \eqref{eq:chikin}, then by definition \eqref{eq:chielldef} we have 
\begin{equation}\label{eq:chiell1}
    \int \int_{\Omega_{\bar \sigma}} \varphi_t  \bs \chi_{\ell}^{\max} + \varphi_x  \bs \lambda_1[\xi](\bs u) \bs \chi_{\ell}^{\max} \dif x \dif t \dif \xi = 0 \qquad \forall \; \varphi \in C^1_c(\Omega_{\bar \sigma}), \quad \mr{supp} \, \varphi \subset \{t > 0\}.
\end{equation}
To conclude the proof we need to extend \eqref{eq:chiell1}  to all $\varphi \in C^1_c({\overline \Omega}_{\bar \sigma})$ with $\mr{supp} \, \varphi \subset \{t > 0\}$. Let $\varphi$ be such a function and consider a smooth cutoff $h_\varepsilon$ of the following form: 

\begin{equation}
\begin{cases}
 h_\varepsilon(t,x) = 1 & \qquad \text{in $\{(t,x) \;  | \; x \leq \bar \sigma_x(t) -2\varepsilon\}$} \\
 h_\varepsilon(t,x) = 0 & \qquad \text{in $\{(t,x) \;  | \;  \bar \sigma_x(t) -\varepsilon < x <\bar \sigma_x(t)\}$}\\
 |h_\varepsilon(t,x)| < 1,  \quad  |\nabla h_{\varepsilon}| \leq \frac{C}{\varepsilon} & \qquad \text{everywhere}
\end{cases}
\end{equation}
where $C> 0$ is a constant independent of $\varepsilon$. This is possible because $\bar \sigma_x$ is Lipschitz. Define $\varphi^\varepsilon := \varphi h_\varepsilon$, so that $\varphi^\varepsilon$ can be now used in \eqref{eq:chiell1} and we get 
\begin{equation}
\begin{aligned}
     \lim_{\varepsilon \to 0} \Bigg[& \int_{\Omega_{\bar \sigma}} \int h_\varepsilon \Big( \varphi_t  \bs \chi_{\ell}^{\max}  + \varphi_x  \bs \lambda_1[\xi](\bs u) \bs \chi_{\ell}^{\max} \Big) \dif \xi \dif x \dif t  \\
    & + \int_{\Omega_{\bar \sigma}}\int \varphi \Big( h^\varepsilon_t  \bs \chi_{\ell}^{\max} + h^\varepsilon_x  \bs \lambda_1[\xi](\bs u) \bs \chi_{\ell}^{\max} \Big) \dif \xi \dif x \dif t \Bigg]= 0
\end{aligned}
\end{equation}
The second term can be estimated by 
\begin{equation}
    \begin{aligned}
         \left|\int_{\Omega_{\bar \sigma}}\int \varphi \Big( h^\varepsilon_t  \bs \chi_{\ell}^{\max} + h^\varepsilon_x  \bs \lambda_1[\xi](\bs u) \bs \chi_{\ell}^{\max} \Big) \dif \xi \dif x \dif t \right| & \leq C \int \frac{1}{\varepsilon}\int_0^T\int_{\bar \sigma_x(t)-2\varepsilon}^{\bar \sigma_x(t)-\varepsilon} \bs \chi_\ell^{\max}(t,x,\xi) \dif x \dif t \dif \xi
    \end{aligned}
\end{equation}
where $T> 0$ is big enough so that $\mr{supp} \; \varphi \subset (0,T) \times \mathbb R^2$. Since 
$$
\bs \chi_\ell^{\max}(t,x,\xi) \neq 0 \quad \Longrightarrow \quad w(t,x) > \xi > \bar \sigma_\xi \geq w(t,\bar \sigma_x(t)) \quad \text{for a.e. $(t,x, \xi)$}
$$
we deduce
$$
\begin{aligned}
    \frac{1}{\varepsilon}\int_0^T\int_{\bar \sigma_x(t)-2\varepsilon}^{\bar \sigma_x(t)-\varepsilon} \int \bs \chi_\ell^{\max}(t,x,\xi) \dif \xi \dif x \dif t &\leq C \frac{1}{\varepsilon}\int_0^T\int_{\bar \sigma_x(t)-2\varepsilon}^{\bar \sigma_x(t)-\varepsilon} |w(t,x)- w(t,\bar \sigma_x(t))| \dif x \dif t.
\end{aligned}
$$
Finally, the second term above goes to zero thanks to Lemma \ref{lemma:traces}, and this proves our claim.
This proves our claim.
\vspace{0.3cm}

By Remark \ref{rem:suppwchi}, we have thus proved that for every $\bar \sigma \in \Gamma^{\min}$, there holds
$$
\esssup \bs u_{\mid \{(t,x) \, | \; x \leq \bar \sigma_x(t)\}} \leq \bar \sigma_\xi
$$
Since $|\dot {\bar \sigma}_x| \leq L$ for some uniform positive constant $L >0$, we have
$$
\esssup \bs u_{\mid \{(t,x) \, | \; x \leq \bar \sigma_x(T)- LT, \quad t \in [0,T]\}} \leq \bar \sigma_\xi
$$
The proof is concluded by taking a sequence of curves $\bar \sigma^n \in \Gamma^{\min}$ such that 
$$
w_{\min} \leq \bar \sigma^n_\xi \leq w_{\min} + \frac{1}{n}
$$
whose existence is guaranteed by Proposition \ref{prop:lagr}.
\end{proof}

\begin{coro}\label{lemma:uconst}
    Let $\bs u : \mathbb R^+ \times \mathbb R \to \mc U$ be an isentropic solution and let
$$
\bar {\bs u}_0 := \text{weak$^*$}-\lim_{t \to 0^+} \bs u(t,\cdot)
$$ Assume that $\bar {\bs u}_0$ is constant.
    Then $\bs u(t,x) =  \bar {\bs u}_0$ for a.e. $(t,x)$, i.e. $\bs u$ is also constant in $\mathbb R^+ \times \mathbb R$.
\end{coro}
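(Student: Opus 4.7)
The plan is to apply Lemma \ref{lemma:lrsol} to both Riemann invariants $w = \phi_1(\bs u)$ and $z = \phi_2(\bs u)$ and combine the resulting far-field behavior with the weak-$*$ trace assumption, forcing the range $\mc W = [w_{\min}, w_{\max}] \times [z_{\min}, z_{\max}]$ to degenerate to a single point. Set $\bar w_0 := \phi_1(\bar{\bs u}_0)$ and $\bar z_0 := \phi_2(\bar{\bs u}_0)$.

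First I would fix $T > 0$ and $\epsilon > 0$ and apply Lemma \ref{lemma:lrsol} to $w$ together with its symmetric counterpart for $z$. This produces $M = M(\epsilon) > 0$ such that for a.e.~$(t, x) \in (0, T) \times (M, +\infty)$,
$$
w_{\max} - \epsilon \leq w(t, x) \leq w_{\max}, \qquad |z(t, x) - z^R| \leq \epsilon,
$$
where $z^R \in \{z_{\min}, z_{\max}\}$ is the extremum attained by $z$ at $+\infty$; an analogous bound holds on $(0, T) \times (-\infty, -M)$ with $w_{\max}$ replaced by $w_{\min}$ and $z^R$ by the opposite extremum $z^L$. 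Because $\phi^{-1}$ is smooth on $\overline{\mc W}$, these Riemann-invariant estimates promote to uniform state-variable bounds $|\bs u(t, x) - \bs u^R| \leq C \epsilon$ on the right strip and $|\bs u(t, x) - \bs u^L| \leq C \epsilon$ on the left strip, with $\bs u^R := \phi^{-1}(w_{\max}, z^R)$ and $\bs u^L := \phi^{-1}(w_{\min}, z^L)$.

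Next I would test the weak-$*$ convergence $\bs u(t, \cdot) \rightharpoonup^* \bar{\bs u}_0$ against the $\mathbf L^1(\mathbb R)$ function $\chi_{[M, M+1]}$: as $t \to 0^+$ the integral $\int_M^{M+1} \bs u(t, x) \, \dif x$ converges to $\bar{\bs u}_0$, while for a.e.~$t \in (0, T)$ it lies within $C \epsilon$ of $\bs u^R$ by the previous step; hence $|\bar{\bs u}_0 - \bs u^R| \leq C \epsilon$, and letting $\epsilon \to 0$ yields $\bar{\bs u}_0 = \bs u^R$. The same argument on $[-M-1, -M]$ gives $\bar{\bs u}_0 = \bs u^L$. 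Invertibility of $\phi$ then turns $\bs u^R = \bar{\bs u}_0 = \bs u^L$ into $(w_{\max}, z^R) = (\bar w_0, \bar z_0) = (w_{\min}, z^L)$, and since $\{z^R, z^L\} = \{z_{\min}, z_{\max}\}$, this forces $w_{\max} = w_{\min} = \bar w_0$ and $z_{\max} = z_{\min} = \bar z_0$; therefore $\bs u \equiv \bar{\bs u}_0$ a.e.

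The main technical subtlety I anticipate is identifying the extremum $z^R$ attained by $z$ at each spatial end, since the Lagrangian curves associated with $z$ flow with speed $\bs \lambda_2$ that may have sign opposite to $\bs \lambda_1$; this is a bookkeeping issue for the symmetric version of Proposition \ref{prop:inter}, but does not affect the final collapse argument, which only requires $z^R \neq z^L$ whenever $z_{\min} < z_{\max}$.
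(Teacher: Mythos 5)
Your proposal is correct and follows essentially the same route as the paper: the paper's proof is exactly the observation that Lemma \ref{lemma:lrsol} pins $w$ (and, by symmetry, $z$) to its extremal values near $x=\pm\infty$ uniformly for $t\in(0,T)$, so the constant weak-$*$ trace forces $w_{\min}=w_{\max}$ and $z_{\min}=z_{\max}$. Your version merely makes explicit the step of transferring the far-field bounds to the trace by testing against indicators, and sidesteps (correctly) the question of which extremum $z$ attains at which end; the paper instead treats the two Riemann invariants separately by the symmetric argument.
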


\begin{proof}
We prove that $w = \phi_1(\bs u)$ is constant, the proof for the other Riemann invariant being entirely symmetric.
By \eqref{eq:-inf}, \eqref{eq:+inf} it holds
$$
w_{\min} = \lim_{x \to -\infty} \bar {\bs u}_0(x) = \lim_{x \to +\infty} \bar {\bs u}_0(x) = w_{\max}
$$
and this proves the Corollary.
\end{proof}

The following is a standard Lemma that is useful for later.
\begin{lemma}\label{lemma:covering}
    Let $\bs \nu$ be a locally finite measure in $\mathbb R^+ \times \mathbb R$. Then for every $\bar t \geq 0$ there holds  \begin{equation}\label{eq:covlemma}
        \lim_{r \to 0^+} \frac{1}{r}\bs \nu \left( \big((\bar t-r, \bar t) \cup (\bar t , \bar t + r)\big) \times (\bar x- r, \bar x+ r)\right) = 0 \qquad \text{for a.e. $\bar x \in \mathbb R$}
    \end{equation}
    \end{lemma}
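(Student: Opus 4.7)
The plan is to use a Vitali covering argument applied to the family of one-dimensional measures on $\mathbb R$ obtained by slicing $\bs \nu$ over time intervals of radius $r$ around $\bar t$.

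First, I would define, for each $r > 0$, the Borel measure $\lambda_r$ on $\mathbb R$ by
$$\lambda_r(A) := \bs \nu\big( S_r \times A \big), \qquad S_r := \big((\bar t - r, \bar t) \cup (\bar t, \bar t + r)\big) \cap \mathbb R^+.$$
This family is monotone in $r$ (since $S_r \subseteq S_\rho$ whenever $r \le \rho$, one has $\lambda_r \le \lambda_\rho$ as measures), and for each bounded Borel set $A \subset \mathbb R$ and each $r > 0$, $\lambda_r(A) < +\infty$ by local finiteness of $\bs \nu$. Since $\bigcap_{r > 0} S_r = \emptyset$, continuity of measure from above yields $\lambda_r(A) \to 0$ as $r \to 0^+$, for every such bounded $A$.

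Next, fixing $M \in \mathbb N$ and $\delta > 0$, I would consider
$$A_{\delta, M} := \Big\{ \bar x \in [-M, M] \; : \; \limsup_{r \to 0^+} \frac{\lambda_r\big( (\bar x - r, \bar x + r) \big)}{r} > \delta \Big\}$$
and show that $|A_{\delta, M}| = 0$. Given $\varepsilon > 0$, I would choose $\rho \in (0, 1)$ so small that $\lambda_\rho([-M-1, M+1]) < \varepsilon$. For each $\bar x \in A_{\delta, M}$, by definition of $\limsup$ there exists $r(\bar x) \in (0, \rho)$ with $\lambda_{r(\bar x)}((\bar x - r(\bar x), \bar x + r(\bar x))) > \delta \, r(\bar x)$, and the family $\{(\bar x - r(\bar x), \bar x + r(\bar x))\}_{\bar x \in A_{\delta, M}}$ then constitutes a Vitali cover of $A_{\delta, M}$. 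Applying the Vitali covering theorem would give a countable disjoint subfamily $\{I_k = (\bar x_k - r_k, \bar x_k + r_k)\}_k$ with $|A_{\delta, M} \setminus \bigcup_k I_k| = 0$. The monotonicity $\lambda_{r_k} \le \lambda_\rho$, together with the disjointness of the $I_k$ and the inclusion $I_k \subset [-M-1, M+1]$ (since $r_k < \rho < 1$), would then produce
$$\delta \sum_k r_k \le \sum_k \lambda_\rho(I_k) \le \lambda_\rho([-M-1, M+1]) < \varepsilon,$$
so that $|A_{\delta, M}| \le 2 \sum_k r_k \le 2\varepsilon/\delta$. Sending $\varepsilon \to 0$ forces $|A_{\delta, M}| = 0$, and taking a countable union over $\delta \in \{1/n\}_{n \in \mathbb N}$ and $M \in \mathbb N$ concludes the proof: since the $\limsup$ vanishes a.e.\ and the quantity in \eqref{eq:covlemma} is nonnegative, the limit itself vanishes a.e.

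The argument is essentially routine; the only subtle point is that the measure $\lambda_{r(\bar x)}$ in each covering element depends on $\bar x$ through the covering scale, which at first sight prevents a direct summation. The monotonicity of $r \mapsto \lambda_r$, combined with the uniform bound $r(\bar x) < \rho$, is precisely what allows replacing $\lambda_{r(\bar x)}$ by the single reference measure $\lambda_\rho$ before summing, after which the estimate can be driven to zero by shrinking $\rho$.
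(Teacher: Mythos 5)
Your proof is correct, and it takes a genuinely different route from the paper. The paper first removes the part of $\bs \nu$ sitting on the slice $H_{\bar t}=\{t=\bar t\}$ and then invokes the Besicovitch differentiation theorem for the remaining measure with respect to $\mathscr H^1\llcorner H_{\bar t}$: the density is zero $\mathscr H^1$-a.e.\ precisely because the remaining measure gives no mass to $H_{\bar t}$, which yields \eqref{eq:covlemma}. You instead give a self-contained covering argument: slicing $\bs \nu$ into the one-dimensional measures $\lambda_r$, using continuity from above (which is exactly where the exclusion of the time $\bar t$ from $S_r$ enters, playing the role of the paper's subtraction of $\bs\nu\llcorner H_{\bar t}$), and then running a Vitali-type estimate on the exceptional sets $A_{\delta,M}$. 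The two approaches buy different things: the paper's is shorter because it outsources the covering argument to a cited theorem, but it requires translating between balls $B_r$ and the punctured rectangles appearing in the statement; yours is longer but elementary, and it handles the geometry of the statement directly. One small point of precision: as written you select a single radius $r(\bar x)<\rho$ per point, and a one-interval-per-point family is not a Vitali (fine) cover; either observe that the $\limsup$ condition actually provides \emph{arbitrarily small} admissible radii below $\rho$ (so the full family of admissible intervals is a fine cover, and the Vitali covering theorem applies as you use it), or replace the Vitali covering theorem by the $5r$-covering lemma, which only changes the final bound to $|A_{\delta,M}|\leq 10\varepsilon/\delta$ and does not affect the conclusion.
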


\begin{proof}
Let 
    $$
   H_{\bar t} = \{(t,x) \; | \;  t = \bar t\}, \qquad \mu = \mathscr H^1 \llcorner H_{\bar t}, \qquad \wt {\bs \nu} := \bs \nu - \bs \nu\llcorner H_{\bar t}.
    $$
    By the Besicovitch differentiation theorem (see for example \cite[Theorem 2.22]{AFP00}) we have
    $$
    \wt{\bs \nu} = f \mu + \wt{\bs \nu}^s
    $$
    where $f$ is defined $\mu$-a.e. by 
    $$
    f(t,x) := \lim_{r \to 0^+} \frac{\wt{\bs \nu}(B_r((t,x)))}{\mu(B_r((t,x)))} \qquad \text{for $(t,x) \in \mr{supp}\, \mu$}
    $$
    and $\wt{\bs \nu}^s$ is a positive measure (concentrated on a $\mu$-negligible set). Since $\wt{\bs \nu}(H_{\bar t}) = 0$, we must have $f(t,x) = 0$ for $\mu$-a.e. $(t, x$, that is the same of 
    $$
    \lim_{r \to 0^+}\frac{1}{2r}\bs \nu \left(\big((\bar t-r, \bar t) \cup (\bar t , \bar t + r)\big) \times (\bar x-r,\bar x+ r)\right) = 0 \qquad \text{for a.e. $\bar x \in \mathbb R$.}
    $$
\end{proof}

\begin{lemma}\label{lemma:timestrip}
    Let $\bs u : \mathbb R^+ \times \mathbb R$ be a finite entropy solution to \eqref{eq:systemi}. Let $\bar t \geq 0$ fixed. Then for every $M > 0$
    $$
    \lim_{r \to 0} \fint_0^r \int_{-M}^M| \bs u(\bar t + s, x)- \bs u(\bar t,x)| \dif x \dif s = 0
    $$
\end{lemma}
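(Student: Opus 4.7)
The plan is to bound the $\mathbf L^1$ time difference via the defect of a uniformly convex entropy, and to show that this defect vanishes on the time average by combining the entropy equation with the weak-$*$ right continuity of $\bs u$ in time.

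From the weak form of \eqref{eq:systemi}, the map $t\mapsto \int \bs u(t,x)\varphi(x)\dif x$ is Lipschitz for every $\varphi\in C^\infty_c(\mathbb R)$. I fix the weak-$*$ right-continuous representative of $\bs u$, so that $\bs u(\bar t+s,\cdot)\rightharpoonup^*\bs u(\bar t,\cdot)$ as $s\to 0^+$. Under the standing assumptions there exists a $C^2$ entropy $E$ uniformly convex on the range of $\bs u$ (e.g.\ a Lax entropy for large enough parameter; cf.\ the Remark after Theorem~\ref{thm:strongtime}), with flux $G$. The uniform convexity $D^2 E\geq cI$ yields the pointwise bound
\[
c\,|\bs u(\bar t+s,x)-\bs u(\bar t,x)|^2 \leq D(s,x) := E(\bs u(\bar t+s,x)) - E(\bs u(\bar t,x)) - \nabla E(\bs u(\bar t,x))\cdot\bigl(\bs u(\bar t+s,x)-\bs u(\bar t,x)\bigr),
\]
and Cauchy--Schwarz plus Jensen reduce the lemma to showing $\fint_0^r\!\int_{-M}^M D(s,x)\dif x\dif s\to 0$ as $r\to 0^+$.

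Pick $\varphi\in C^\infty_c(\mathbb R)$ with $\varphi\equiv 1$ on $[-M,M]$ and $\varphi\geq 0$. Testing the entropy equation $\partial_t E(\bs u)+\partial_x G(\bs u)=\mu_E$ against $\varphi(x)\mathbf 1_{(\bar t,\bar t+s]}(t)$ (via smooth approximation) yields
\[
\int[E(\bs u(\bar t+s,x))-E(\bs u(\bar t,x))]\varphi\,\dif x = -\int_{\bar t}^{\bar t+s}\!\!\int G(\bs u)\varphi'\,\dif x\dif t + \mu_E\bigl((\bar t,\bar t+s]\times\mr{supp}\,\varphi\bigr).
\]
The flux piece is $O(s)$, so its time-average is $O(r)$. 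Since $\bigcap_{s>0}(\bar t,\bar t+s]=\emptyset$, continuity of measures from above gives $|\mu_E|\bigl((\bar t,\bar t+s]\times\mr{supp}\,\varphi\bigr)\to 0$ as $s\to 0^+$, whence its average over $s\in(0,r)$ tends to $0$ by bounded convergence. For the linear term $\int\nabla E(\bs u(\bar t,x))\cdot(\bs u(\bar t+s,x)-\bs u(\bar t,x))\varphi(x)\dif x$, weak-$*$ right continuity of $\bs u$ (with $\nabla E(\bs u(\bar t,\cdot))\,\varphi\in\mathbf L^1(\mathbb R)$) sends it to $0$ as $s\to 0^+$, and its average again vanishes. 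Combining the three contributions, $\fint_0^r\int D(s,x)\varphi(x)\dif x\dif s\to 0$, completing the proof.

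The main obstacle is the rigorous derivation of the displayed identity for $\int[E(\bs u(\bar t+s))-E(\bs u(\bar t))]\varphi\dif x$ with the half-open interval $(\bar t,\bar t+s]$: this forces the representative $\bs u(\bar t,\cdot)$ (weak-$*$ right limit of $\bs u$) to be compatible with the weak-$*$ right limit of $E(\bs u)$, which a priori may differ by a convexity defect at the (countably many) potential jump times of $\mu_E$. Handling these times rigorously---by invoking Lemma~\ref{lemma:covering} applied to $|\mu_E|$ to exclude line-concentrations of the dissipation on $\{t=\bar t\}\times\mr{supp}\,\varphi$---is the technically delicate step.
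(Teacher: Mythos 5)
There is a genuine gap, and it is exactly at the point you flag as ``technically delicate'': your displayed identity is not valid as written. Testing $\partial_t E(\bs u)+\partial_x G(\bs u)=\mu_E$ on $(\bar t,\bar t+s]$ and letting the lower endpoint tend to $\bar t^+$ along Lebesgue points produces, on the left-hand side, the \emph{weak-$*$ right limit of $E(\bs u(t,\cdot))$}, call it $L^+$, and not $\int E(\bs u(\bar t,x))\varphi\,\dif x$ computed from the weak-$*$ right limit of $\bs u$. By convexity $L^+\geq \int E(\bs u(\bar t,\cdot))\varphi$, and the difference is precisely the convexity defect measuring the failure of strong convergence of $\bs u(t,\cdot)$ as $t\to\bar t^+$. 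Running your argument honestly therefore yields $\fint_0^r\int D(s,x)\varphi\,\dif x\,\dif s\to L^+-\int E(\bs u(\bar t,\cdot))\varphi\geq 0$, i.e.\ it reduces the lemma to showing that this defect vanishes --- which is essentially the statement to be proved (in the paper this commutation, \eqref{eq:commutation}, is \emph{deduced from} Lemma~\ref{lemma:timestrip}, not used to prove it). Your proposed patch does not close this: Lemma~\ref{lemma:covering} applied to $|\mu_E|$ only rules out concentration of the dissipation on the line $\{t=\bar t\}$, whereas the convexity defect is produced by possible oscillations of $\bs u(t,\cdot)$ for $t$ slightly above $\bar t$, which are compatible with arbitrarily small dissipation near $\{t=\bar t\}$. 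Ruling out such oscillations requires genuine nonlinearity and a compactness/rigidity input, which your argument never invokes.

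A secondary point: you assume a $C^2$ uniformly convex entropy $E$ exists. This is not a hypothesis of the lemma (nor of Theorem~\ref{thm:strongtime}); the remark about Lax entropies is conditional, and the paper explicitly treats the case without such an entropy. The paper's actual proof is of a different nature: it blows up around $(\bar t,\bar x)$ for a.e.\ $\bar x$, uses Lemma~\ref{lemma:covering} to kill the dissipation in the limit, applies compensated compactness (this is where genuine nonlinearity enters) to obtain a strongly convergent subsequence to an isentropic solution with constant weak-$*$ initial trace, and then invokes the Liouville-type Corollary~\ref{lemma:uconst} (built on the Lagrangian representation) to conclude the blow-up limit is the constant $\bs u(\bar t,\bar x)$; a Fubini argument then gives the time-strip estimate. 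Some input of this kind is unavoidable, and your proposal is missing it.
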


\begin{proof}
    \textbf{1.} In this step, we show that 
   \begin{equation}\label{eq:fullconv}
 \bs u_r^{\bar x} \longrightarrow \bs u(\bar t, \bar x) \qquad  \text{strongly in $\mathbf L^1_{loc}(\mathbb R^+\times \mathbb R)$ as $r \to 0^+$} \quad \forall \; \bar x \in \mathbb R \setminus \mc N
\end{equation}
where $\msc L^1(\mc N) = 0$, and for $\bar x \in \mathbb R \setminus \mc N$ we define the rescalings 
$$
\bs u^{\bar x}_r(s, y) := \bs u(\bar t + r s, \bar x + r y). 
$$
    
    From Lemma \ref{lemma:covering} applied with $\bs \nu$ as in \eqref{eq:supmueta}, we deduce
    \begin{equation}\label{eq:nufub}
        \lim_{r \to 0^+} \frac{1}{r}\bs \nu \left((\bar t , \bar t + r) \times (\bar x-r,\bar x+ r)\right) = 0 \qquad \text{for a.e. $\bar x \in \mathbb R$}
    \end{equation}

Let $\mc N \subset \mathbb R$ be a Lebesgue null set containing all the points at which \eqref{eq:nufub} fails and the set of non-Lebesgue points of $u(\bar t, \cdot)$.
Then, up to a subsequence $\{r_j\}$, by compensated compactness (see e.g. \cite[Chapter 9]{Ser00}, \cite{Tza03}, or \cite{AMT25} for a summary of the results) and by \eqref{eq:nufub}, there is a weak  isentropic solution $\bs u_\infty^{\bar x} : \mathbb R^+ \times \mathbb R \to \mc U$ such that 
\begin{equation}\label{eq:rjtouinf}
\bs u^{\bar x}_{r_j}  \longrightarrow  \bs u_{\infty}^{\bar x} \qquad \text{strongly in $\mathbf L^1_{loc}(\mathbb R^+ \times \mathbb R)$ as $j \to +\infty$}.
\end{equation}
We now claim that 
\begin{equation}\label{eq:constinit}
     \bs u^{\bar x}_{\infty}(t,\cdot)\longrightarrow \bs u(\bar t, \bar x) \qquad \text{weakly$^*$ in $\mathbf L^\infty(\mathbb R)$ as $t \to \bar t^+$}.
\end{equation}
In fact, take any $\phi \in C^1_c(\mathbb R)$ supported in a compact set $[-L, L]$. By \eqref{eq:rjtouinf} we have
$$
\int_{\mathbb R^+} \Phi_{r_j}(t) \dif t \longrightarrow 0 \quad \text{as $j \to +\infty$}, \quad \Phi_{r_j}(t) := \int_{\mathbb R} \phi(y)|\bs u_{\infty}(t,x) - \bs u_{r_j}^{\bar x}(t, x)| \dif x.
$$
Therefore up to extracting a further subsequence $\Phi_{r_j}$ goes to zero pointwisely for a.e. $t$, so that we can assume also
\begin{equation}\label{eq:tconvSc}
    \lim_j \int_{\mathbb R} \phi(y)|\bs u_{\infty}(t,x) - \bs u_{r_j}^{\bar x}(t, x)| \dif x = 0 \quad \text{for all $t \in \mathbb R^+ \setminus S,\quad |S| = 0$.}
\end{equation}
Moreover, since $\phi$ is compactly supported and $\bar x$ is a Lebesgue point of $\bs u(\bar t, \cdot)$ we have
\begin{equation}\label{eq:lebp}
    \lim_j  \int_{\mathbb R} \phi(y) \left|\bs u(\bar t, \bar x + r_j y) -  \bs u(\bar t , \bar x)\right| \dif y  = 0.
\end{equation}
Then we estimate for $t \in S^c$, using \eqref{eq:lebp}, \eqref{eq:tconvSc} in the first two equalities
$$
\begin{aligned}
    \left|\int_{\mathbb R}  \phi(y) \big(\bs u(\bar t, \bar x) -  \bs u^{\bar x}_{\infty}(t,y)\big)\right| \dif y  &  = \lim_{j \to \infty}  \left|\int_{\mathbb R}  \phi(y) \big(\big(\bs u(\bar t, \bar x + r_j y) -  \bs u^{\bar x}_{\infty}(t,y)\big)\right|  \\
    & = \lim_{j \to \infty} \left| \int_{\mathbb R}  \phi(y) \big(\bs u(\bar t, \bar x + r_j y) -  \bs u^{\bar x}_{r_j}(t,y)\big) \dif y\right| \\
    & \leq \int_{\bar t}^t \int_{\mathbb R} |\phi^\prime(y) f(\bs u_{r_j}^{\bar x}(s, y)) |\dif y \dif s \leq  C (t-\bar t)
\end{aligned}
$$
where in the last inequality we used 
$$
\begin{aligned}
    \int_{\mathbb R}  \phi(y) \big(\bs u^{\bar x}_{r_j}(t,y)- \bs u(\bar t, \bar x + r_j y) \big) \dif y & = \int_{\bar t}^t \frac{\dif}{\dif t} \int_{\mathbb R} \phi(y) \bs u_{r_j}^{\bar x}(s, y) \dif y \dif s \\
    & = \int_{\bar t}^t \int_{\mathbb R} \phi^\prime(y) f(\bs u_{r_j}^{\bar x})(s, y) \dif y \dif s
\end{aligned}
$$

Finally, we obtained that $\bs u_\infty^{\bar x}$ is an isentropic solution in $\mathbb R^+\times \mathbb R$ satisfying \eqref{eq:constinit} with $\bs u_0 = \bs u(\bar t, \bar x)$, and therefore by Lemma \ref{lemma:uconst} we obtain  that $\bs u_{\infty}^{\bar x} \equiv \bs u(\bar t, \bar x)$ is constant.
In particular, the limiting constant $\bs u^{\bar x}_{\infty}$ does not depend on the subsequence, therefore we actually obtain the full convergence \eqref{eq:fullconv}.

\vspace{0.3cm}
       \textbf{2.} It follows from \eqref{eq:fullconv} that 
       \begin{equation}\label{eq:point}
       \lim_{r \to 0^+} \int_0^1 \int_{-1}^1 |\bs u(\bar t + s r, \bar x + y r) -\bs u(\bar t, \bar x)| \dif y \dif s = 0 \qquad \text{for a.e. $\bar x \in \mathbb R$}.
       \end{equation}
       Let $\omega : \mathbb R^+ \to \mathbb R^+$ the $\mathbf L^1$ modulus of continuity of $\bs u(\bar t, \cdot)$ on the compact set $[-M -1, M+1]$, i.e.
       \begin{equation}
           \int_{-M-1}^{M+1} |\bs u(\bar t, z) - \bs u(\bar t, z+\delta)|\dif z \leq \omega(|\delta|)
       \end{equation}
       By Fubini's theorem we obtain
\begin{equation}\label{eq:eqfub}
    \begin{aligned}
        & \int_{-M}^M \int_0^1 \int_{-1}^1| \bs u(\bar t + s r, \bar x + y r) - \bs u(\bar t, \bar x)| \dif y \dif s \dif \bar x \\
        & =  \int_{[-1,1]} \int_{-M}^M \int_0^1 |\bs u(\bar t + s r, \bar x + y r) - \bs u(\bar t, \bar x)|\dif \bar x\dif s \dif y  \\
        & = \int_{[-1,1]} \int_0^1 \int_{-M+yr}^{M+yr} |\bs u(\bar t + s r, z) - \bs u(\bar t, z-y r)|\dif z\dif s \dif y \\
        & \leq \int_{[-1,1]} \int_0^1 \int_{-M}^{M} |\bs u(\bar t + s r, z) - \bs u(\bar t, z)|\dif z\dif s \dif y  +   4r\|\bs u\|_{\mathbf L^\infty} +\int_{-1}^1 \omega(|y|r)\dif y \\
        & =2 \fint_{0}^r \int_{-M}^{M} |\bs u(\bar t + s, z) - \bs u(\bar t, z)|\dif z\dif s + o(1)
    \end{aligned}
\end{equation}
where $o(1)$ goes to zero as $r \to 0$.
From \eqref{eq:point}, \eqref{eq:eqfub} and the dominated convergence theorem, we deduce that
$$
\lim_{r \to 0^+} \fint_{0}^r \int_{-M}^{M} |\bs u(\bar t + s, z) - \bs u(\bar t, z)|\dif z\dif s  = \frac{1}{2}  \lim_{r \to 0^+} \int_{-M}^M \int_0^1 \int_{-1}^1|\bs u(\bar t + s r, \bar x + y r) - \bs u(\bar t, \bar x)| \dif y \dif s \dif \bar x  = 0
$$
       \end{proof}

\begin{proof}[Proof of Theorem \ref{thm:strongtime}]
Notice that the limit 
\begin{equation}\label{eq:weaklimu}
\bs u(\bar t, \cdot) = \text{weak$^*$}-\lim_{t \to \bar t} \bs u(t,\cdot)
\end{equation}
exists at every time $\bar t$, as well as the one sided limits
\begin{equation}\label{eq:weaklimeta}
\text{weak$^*$}-\lim_{t \to \bar t^{\pm}} \eta(\bs u(t,\cdot))
\end{equation}
Therefore we only have to prove that the limit in \eqref{eq:weaklimu} is strong, in which case the one sided limits \eqref{eq:weaklimeta} coincide.
If $\bs u(t,x)$ is  a finite entropy solution, $\bs v(t,x) :=\bs u(T-t, -x)$ is again a finite entropy solution in $[0,T]\times \mathbb R$. Therefore, up to this change of variable, to prove Theorem \ref{thm:strongtime} it is enough to prove that $\bs u(t, \cdot) \longrightarrow \bs u(\bar t, \cdot)$ strongly for $t \to \bar t^+$.

    \textbf{1.} Thanks to Lemma \ref{lemma:timestrip}, for every $M > 0$, we deduce that there exists a sequence $t_j \to \bar t^+$ such that 
    $$
    \lim_{j \to +\infty} \int_{-M}^M| \bs u(t_j, x)- \bs u(\bar t,x)| \dif x = 0.
    $$
Let $(\eta, q)$ be any entropy-entropy flux pair.
Computing the limit along the specific sequence $t_j$ we find that
\begin{equation}\label{eq:commutation}
\text{weak$^*$}-\lim_{t \to \bar t^{\pm}} \eta(\bs u(t, \cdot)) = \eta(\text{weak$^*$}-\lim_{t \to \bar t} \bs u(t,\cdot)) = \eta (\bs u(\bar t, \cdot)).
\end{equation}
If a uniformly convex entropy exists, then taking $\eta$ as that entropy the proof can be immediately concluded by a standard argument, using that the weak limit commutates with a uniformly convex function. In the more general case, a bit more work is required, and is the content of the next step.

\vspace{0.3cm}
\textbf{2.} Let now $\{t_j\}$ be a sequence converging to $\bar t$ for which there exists a Young measure $\alpha : \mathbb R \to \mc P(\mc U)$, $x \mapsto \alpha_x \in \mc P(\mc U)$, such that
$$
\bs u(t_j, \cdot) \longrightarrow  \alpha \qquad \text{in the sense of Young measures as $j \to +\infty$}.
$$
We recall that the latter convergence means that for every continuous function $\psi : \mc U \to \mathbb R$ there holds
$$
\psi(\bs u(t_j, \cdot)) \rightharpoonup^\ast f_{\psi}(x) := \int \psi(\bs v) \dif \alpha_{x}(\bs v) \qquad \text{weakly$^\ast$ in $\mathbf L^{\infty}$ as $j \to +\infty$.}
$$
Since every bounded sequence in $\mathbf L^\infty$ admits a subsequence converging in the sense of Young measures, it is clear that to conclude the proof it is enough to show that $\alpha$ is a Dirac Young measure for every such sequence, i.e. that $\alpha_x = \delta_{\bs v(x)}$, for some $\bs v(x) \in \mc U$. 

We first notice that \eqref{eq:commutation}, applied to the sequence $\{t_j\}$, means (taking $\psi = \eta$, where $\eta$ is a smooth entropy):
\begin{equation}\label{eq:comm1}
 \int_{\mc U} \eta(\bs v) \dif \alpha_x(\bs v) = \eta \left( \int_{\mc U} \bs v \dif \alpha_x(\bs v)\right) \qquad \text{for a.e. $x \in \mathbb R$}
\end{equation}
Let $x$ be a point where \eqref{eq:comm1} holds, and by contradiction assume that $\alpha_x$ is not a Dirac mass. Then, without loss of generality, we can assume that $\alpha_x$ is not supported in a single curve $S^1_{\xi} := \{\phi_1(\bs u) = \xi\}$ (otherwise, it is not supported in a single curve of the second Riemann invariant, for which the same analysis can be applied). 
Define the barycenter of $\alpha_x$
$$
\bs b_x := \int_{\mc U} \bs w \dif \alpha_x(\bs w). 
$$
Then at least one of the following holds:
\begin{equation}\label{eq:bsplits}
\alpha_x \big(\{\bs w \; | \; \phi_1(\bs w) < \phi_1(\bs b_x) \}\big) > 0, \qquad  \alpha_x \big(\{\bs w \; | \; \phi_1(\bs w) > \phi_1(\bs b_x) \}\big) > 0.
\end{equation}
Without loss of generality assume that the first one holds.
Let $\rho$ be a smooth function $\rho:[\ubar w, \bar w] \to \mathbb R^+$ and define 
$$
\eta_{\rho}(\bs w) := \int_{\mathbb R} \bs \chi[\xi](\bs w) \rho(\xi) \dif \xi.
$$
Since $\bs \chi[\xi]$ are distributional entropies, it is immediate to check that $\eta_\rho$ is a smooth entropy (see \cite{PT00}, \cite{AMT25}). Therefore by \eqref{eq:comm1} there holds
\begin{equation}
    \int_{\mc U} \int_{\mathbb R} \bs \chi[\xi](\bs w) \rho(\xi) \dif \xi \dif \alpha_x(\bs w) = \int_{\mathbb R} \bs \chi[\xi]\left(\bs b_x\right)  \rho(\xi) \dif \xi 
\end{equation}
But now let $\rho$ be strictly positive in a thin strip 
$$
\{\bs w \in \mc U \;  | \; \phi_1(\bs w) \in (\xi_{\min}-\varepsilon, \xi_{\min}+\varepsilon) \}
$$
and zero outside, where 
$$
\xi_{\min} : = \inf \{\phi_1(\bs w) \;  | \; \bs w \in \mr{supp} \; \alpha_x\}, \qquad \varepsilon < \min\{\bar r/2, \, \mr{dist} (\xi_{\min}, \phi_1(\bs b_x)\}
$$
and $\bar r$ is as in Proposition \ref{prop:localspeed}. Then we have 
$$
0 < \int_{\mc U} \int_{\mathbb R} \bs \chi[\xi](\bs w) \rho(\xi) \dif \xi \dif \alpha_x(\bs w)  = \int_{\mathbb R} \bs \chi[\xi]\left(\bs b_x\right)  \rho(\xi) \dif \xi  = 0
$$
where in the first inequality we used that $\bs \chi[\xi](\bs w) > c > 0$ if 
$$
\xi, \phi_1(\bs w) \in (\xi_{\min}-\varepsilon, \xi_{\min}+\varepsilon), \qquad \xi \leq \phi_1(\bs w)
$$
by Proposition \ref{prop:localspeed}, and in the last inequality we used $\bs \chi[\xi](\bs b_x) = 0$ for all $\xi \in \mr{supp} \, \rho$.
\end{proof}

We conclude the section with a simple corollary of Theorem \ref{thm:strongtime}.

\begin{coro}
    Let $\bs u$ be a finite entropy solution to \eqref{eq:systemi}. Then for every entropy-entropy flux $\eta, q$, the dissipation measure $\mu_\eta$ satisfies
    $$
   |\mu_\eta| \big(\{t = \bar t\} \big) = 0 \qquad \forall \; \bar t > 0.
    $$
\end{coro}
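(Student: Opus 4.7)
The plan is to deduce the absence of time-atoms of $\mu_\eta$ directly from Theorem~\ref{thm:strongtime}, by testing the distributional identity $\mu_\eta = \partial_t \eta(\bs u) + \partial_x q(\bs u)$ against a product $\rho_\varepsilon(t)\varphi(x)$, where $\rho_\varepsilon$ smoothly approximates the indicator $\mathbf 1_{\{\bar t\}}$. The heuristic is that an atom of $\mu_\eta$ on $\{t = \bar t\}$ would correspond to a jump in time of $\eta(\bs u)$, which strong $\mathbf L^1_{loc}$-continuity forbids, while the term $\partial_x q(\bs u)$ cannot produce such atoms because $q(\bs u) \in \mathbf L^\infty$ and the differentiation is in $x$ only.

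Since $\mc U$ is bounded and any $C^2$ entropy $\eta$ is Lipschitz on $\mc U$, Theorem~\ref{thm:strongtime} gives $\eta(\bs u) \in C^0([0,+\infty);\, \mathbf L^1_{loc}(\mathbb R))$, so for every $\varphi \in C_c^\infty(\mathbb R)$ the function $F_\varphi(t) := \int_{\mathbb R} \eta(\bs u(t,x))\varphi(x)\dif x$ is continuous on $[0,+\infty)$. Fix $\bar t > 0$ and, for $\varepsilon < \bar t/4$, choose $\rho_\varepsilon \in C_c^\infty((0,+\infty))$ with $\rho_\varepsilon \equiv 1$ on $[\bar t - \varepsilon, \bar t + \varepsilon]$, $\mr{supp}\,\rho_\varepsilon \subset [\bar t - 2\varepsilon, \bar t + 2\varepsilon]$, and $\int |\rho_\varepsilon'(t)|\dif t \leq 2$. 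Testing the distributional identity against $\rho_\varepsilon(t)\varphi(x)$ yields
$$
\int \rho_\varepsilon\,\varphi \dif\mu_\eta \;=\; -\int\!\!\int \eta(\bs u)\,\rho_\varepsilon'(t)\,\varphi(x)\dif x\dif t \;-\; \int\!\!\int q(\bs u)\,\rho_\varepsilon(t)\,\varphi'(x)\dif x\dif t.
$$

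The last term on the right is bounded by $4\varepsilon\,\|q(\bs u)\|_{\mathbf L^\infty}\|\varphi'\|_{\mathbf L^\infty}|\mr{supp}\,\varphi|$ and vanishes as $\varepsilon \to 0$. Using $\int \rho_\varepsilon'(t)\dif t = 0$, the penultimate term rewrites as $\int \rho_\varepsilon'(t)\big(F_\varphi(t) - F_\varphi(\bar t)\big)\dif t$, which is dominated by $2\sup_{|t - \bar t|\leq 2\varepsilon}|F_\varphi(t) - F_\varphi(\bar t)|$ and vanishes by continuity of $F_\varphi$. On the left-hand side, local finiteness of $|\mu_\eta|$ and dominated convergence applied to $\rho_\varepsilon(t)\varphi(x) \to \mathbf 1_{\{\bar t\}}(t)\varphi(x)$ yield convergence to $\int \varphi(x)\dif(\mu_\eta\llcorner\{t = \bar t\})$. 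Since $\varphi \in C_c^\infty(\mathbb R)$ is arbitrary, $\mu_\eta\llcorner\{t = \bar t\} = 0$ as a Radon measure, hence $|\mu_\eta|(\{t = \bar t\}) = 0$. The only delicate point, already handled by the cancellation $\int \rho_\varepsilon' = 0$, is to absorb the $1/\varepsilon$ blow-up of $\rho_\varepsilon'$ against the modulus of continuity of $F_\varphi$; no further obstacle is expected once strong time regularity is in hand.
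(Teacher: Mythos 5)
Your proof is correct and follows essentially the same route as the paper: both test $\mu_\eta = \partial_t\eta(\bs u)+\partial_x q(\bs u)$ against a time-localized cutoff times $\varphi(x)$, kill the $q$-term by the shrinking time support, and kill the $\eta$-term by the strong time continuity of $t\mapsto\int\eta(\bs u(t,\cdot))\varphi$ furnished by Theorem~\ref{thm:strongtime}. The only cosmetic difference is your plateau cutoff $\rho_\varepsilon$ (exploiting $\int\rho_\varepsilon'=0$) versus the paper's tent function $h^\varepsilon$ (whose derivative produces the difference of two averages $\fint_{\bar t}^{\bar t+\varepsilon}-\fint_{\bar t-\varepsilon}^{\bar t}$), which is the same cancellation mechanism.
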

In particular $\bs \nu$ defined by \eqref{eq:supmueta} satisfies $\bs \nu(\{t = \bar t\}) = 0$ for all $\bar t > 0$.
\begin{proof}
Let 
$$
h^\varepsilon(t) := \begin{cases}
    \varepsilon^{-1}(t-\bar t+\varepsilon) & \text{if $\bar t-\varepsilon < t < \bar t$}, \\
    \varepsilon^{-1}(\bar t + \varepsilon-t) & \text{if $\bar t < t < \bar t+ \varepsilon$}, \\
    0 & \text{otherwise}.
\end{cases}
$$
    It is enough to show that for every smooth $\varphi \in C^1_c(\mathbb R)$ there holds
    $$
    \lim_{\varepsilon \to 0^+} \iint \varphi(x) h^\varepsilon(t) \dif \mu_\eta(t,x) = 0.
    $$
By definition of $\mu_\eta$, we have
$$
\begin{aligned}
   \left|\iint \varphi(x) h^\varepsilon(t) \dif \mu_\eta(t,x)\right| & = \left|\iint  \varphi(x) h^\varepsilon_t(t) \eta(\bs u)  + q(\bs u) \varphi^\prime(x) h^\varepsilon(t) \dif x \dif t\right|   \\
    & =\left|\fint_{\bar t}^{\bar t + \varepsilon} \eta(\bs u) \varphi(x) \dif t \dif x - \fint_{\bar t-\varepsilon}^{\bar t}\eta(\bs u) \varphi(x) \dif t \dif x \right| + C \varepsilon.
\end{aligned}
$$
    where $C$ depends on $\varphi$ and $\|\bs u\|_{\mathbf L^\infty}$. The first term goes to zero as $\varepsilon \to 0$ since $\eta$ is Lipschitz and $\bs u(t,\cdot) \to \bs u(\bar t, \cdot)$ strongly in $\mathbf L^1_{loc}(\mathbb R)$, and this concludes the proof.
\end{proof}

\section{Proof of the decay estimate}\label{sec:decay}
\subsection{Idea of the proof} The proof is based on the dispersive properties of the transport equation \eqref{eq:kin22}, \eqref{eq:kin221}. Let us sketch the proof of the decay for the first Riemann invariant $w$. To convey the main idea, assume for the moment that $\bs u$ is a finite entropy solution and that (1) and (2) of Proposition \ref{prop:localspeed} hold for every $\xi \in [\ubar w, \bar w]$ and $(w, z) \in [\ubar w, \bar w] \times [\ubar z, \bar z]$, and that $\mu_0, \mu_1 = 0$ in some time interval $[0,T]$, which means entropy has not been dissipated yet. Let $\bs u$ be an entropy solution whose initial datum $\bs u(0,x)$ is in $\mathbf L^1$ with respect to a constant $\widehat {\bs u} \in \mc U$, i.e. $\bs u(0,x)-\widehat{\bs u} \in \mathbf L^1$. Assume also $w(t,x) > \widehat{w}$, where $\widehat{w} = \phi_1(\widehat{\bs u})$ (the general case would follow by considering the positive and negative part separately).

Here \eqref{eq:kin22} can be rewritten as a transport equation for $\bs \chi_{\bs u}$:
$$
        \; \; \partial_t \bs \chi_{\bs u}(t,x,\xi) + \partial_x  \Big(\bs \lambda_1[\xi](\bs u(t,x))\bs \chi_{\bs u}(t,x,\xi)\Big) = 0 \qquad \text{in $\msc D^\prime$}\big(\Omega \times ({\ubar w}, \bar w) \big)
$$
where $\bs \chi_{\bs u} \equiv \bs \chi_{\bs u} \mathbf 1_{\xi > \widehat{w}}(\xi)$
and where $\bs \lambda_1$ is as in \eqref{eq:lambdadef} where here we have considered $\bs \lambda_1[\xi](\bs u)$ as a function of $\bs u$ or of the Riemann invariants up to the invertible change of variables $(\phi_1, \phi_2) : \mc U \to \mc W$. Now, by our simplifying assumptions, for fixed $\bs u$ we have
$$
\partial_\xi \bs \lambda_1[\xi](\bs u) > c > 0, \qquad \bs \chi_{\bs u}(t,x,\xi) > c > 0.
$$
This means that particles at height $\xi$ will typically cross from the left particles at height $\xi^\prime < \xi$, and once they cross they will never meet again. This basic dispersive mechanism is formalized in the introduction of an interaction functional 
$$
\mc I(t) := \int_{\mathbb R^4} \bs \chi_{u}(t, x, \xi) \bs \chi_{u}(t, x^\prime, \xi^\prime) \mathbf 1_{\xi > \xi^\prime}(\xi, \xi^\prime) \mathbf 1_{x < x^\prime}(x, x^\prime) \dif x \dif x^\prime \dif \xi \dif \xi^\prime.
$$
In this simple case, at least formally, by differentiating we obtain 
$$
\dot{\mc I}(t) := 
\int_{ \mathbb R^3}  \mathbf 1_{\xi > \xi^\prime}(\xi, \xi^\prime) \Big(  \bs \lambda_1[\xi^\prime](\bs u(t,y)) - \bs \lambda_1[\xi](\bs u(t,y)) \Big)\, \bs \chi_{\bs u}(t,y, \xi) \bs \chi_{\bs u}(t,y, \xi^\prime) \dif \xi \dif \xi^\prime \dif y 
$$
and therefore since $$\bs \lambda_1[\xi^\prime](\bs u(t,y)) < -c (\xi-\xi^\prime) + \bs \lambda_1[\xi](\bs u(t,x))$$
we obtain
$$
\dot{\mc I}(t) \leq - C \int_{\mathbb R} |w(t,x)-\widehat w|^3 \dif x
$$
where $\widehat{w} = \phi_1(\widehat{\bs u})$, and by integrating we obtain the desired estimate 
$$
\int_0^T \int_{\mathbb R} |w(t,x)-\widehat w|^3 \dif x \leq \frac{1}{C} \mc I(0).
$$
Since $\mc I(0) < +\infty$, in this case this argument would also yield an upper bound on the time $T$, thus providing an upper bound for the time of shock formation.

In the general case, one has to take into account the fact that (1), (2) of Proposition \ref{prop:localspeed} hold only locally in $\xi \sim \phi_1(\bs u)(t,x)$. This will be achieved by localizing the kinetic equation in strips between two values $\mathbb R^+_t \times \mathbb R_x \times [\ell, \ell + r]$, where $0 < r < \bar r$ is sufficiently small (Lemma \ref{lemma:kinloc}), by proving a decay estimate in each strip (Proposition \ref{prop:locdecayGNL}) and finally concluding by an iterative procedure which allows to pass from one strip to the next one (Theorem \ref{thm:kinit}). Moreover, due to the presence of the term $\partial_\xi \mu_1$, one has to add a weight of the form $(\xi-\xi^\prime)$ into the definition of the functional, which in turn allows to control only the $\mathbf L^4$ norm of the solution.

For vanishing viscosity solutions, additional structure on the dissipation measures can be obtain, see \cite{AMT25}.
\begin{thm}[\cite{AMT25}, Theorem 1.1] \label{thm:kin}
Let $\bs u: \Omega \to \mc U$ be a vanishing viscosity solution to \eqref{eq:systemi}, assume that \eqref{eq:systemi} admits a uniformly convex entropy $E : \mc U \to \mathbb R$. Define $\bs \chi_{\bs u}, \bs \psi_{\bs u}$ and $\bs \upsilon_{\bs u}, \bs \varphi_{\bs u}$ as in \eqref{eq:kfdef}, \eqref{eq:kfdef1}. Then there are locally finite measure $\mu_0,\mu_1 \in \msc M(\Omega \times ({\ubar w}, \bar w))$ and $\nu_0,\nu_1 \in \msc M(\Omega \times ({\ubar z}, \bar z))$ such that
\begin{enumerate}
\item Equations \eqref{eq:kin22}, \eqref{eq:kin221} hold with $\mu_1, \nu_1 \geq 0$.
    \item  For some constant $C > 0$, we have
         $$
 ({\mathtt p_{t,x}})_\sharp |\mu_0| +  ({\mathtt p_{t,x}})_\sharp |\nu_0| \leq C \, ({\mathtt p_{t,x}})_\sharp \mu_1 + ({\mathtt p_{t,x}})_\sharp \nu_1. 
 $$
\item For every $M, T > 0$, there holds 
    \begin{equation}
\int_0^T \int_{-M-L(T-t)}^{M + L(T-t)} \int_{\mathbb R} \dif |\mu_i|  \leq C \int_{-M-LT}^{M + LT} E(\bs u(0, x)) \dif x
    \end{equation}
    \begin{equation}
      \int_0^T \int_{-M-L(T-t)}^{M + L(T-t)}\int_{\mathbb R} \dif |\nu_i| \leq C \int_{-M-LT}^{M + LT} E(\bs u(0, x)) \dif x.
 \end{equation}
 where $C, L > 0$ depends only on $E$ and on $f$.
 \end{enumerate}
\end{thm}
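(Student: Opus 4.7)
The approach is to derive the kinetic identities at the level of the viscous approximations $\bs u^\epsilon$ solving \eqref{eq:vsystem}, identify the right-hand side as a distinguished positive measure plus a controlled remainder, and then pass to the limit $\epsilon \to 0^+$. For any $C^2$ entropy--flux pair $(\eta, q)$, the chain rule on \eqref{eq:vsystem} gives
$$
\partial_t \eta(\bs u^\epsilon) + \partial_x q(\bs u^\epsilon) - \epsilon \, \partial_{xx}^2 \eta(\bs u^\epsilon) = - \epsilon \, D^2 \eta(\bs u^\epsilon)(\partial_x \bs u^\epsilon, \partial_x \bs u^\epsilon).
$$
Specializing to $\eta = E$ and integrating over a truncated cone of slope $L > \max|\lambda_i|$, the uniform convexity of $E$ yields $\epsilon \iint |\partial_x \bs u^\epsilon|^2 \leq C \int E(\bs u(0,\cdot))$ on the truncated cone, with $C$ depending only on $E$ and $f$; this is the bound that eventually gives (3).

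Next, I would apply the same formal chain rule to the singular entropies $\bs \chi[\xi]$, $\bs \upsilon[\zeta]$. Since $\bs \chi[\xi](w,z) = \bs \Theta[\xi](w,z)\,\mathbf{1}_{w \geq \xi}$ is discontinuous across $\{w = \xi\}$, the distributional second derivative $D^2 \bs \chi[\xi]$ decomposes into the smooth Hessian of $\bs \Theta[\xi]$ restricted to $\{w > \xi\}$ plus two singular pieces supported on $\{w = \xi\}$: a Dirac of the form $2\,\bs \Theta\,\partial_w \bs \Theta\,\delta(w-\xi)$, and a $\delta^\prime(w-\xi)$ term that can be rewritten as a $\partial_\xi$-derivative of a Dirac. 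Because $\bs \psi[\xi]$ has an analogous jump structure, the identity \eqref{eq:efluxxi} ensures that the singular contributions coming from $\partial_x \bs \psi_{\bs u^\epsilon}$ combine with those from $\partial_t \bs \chi_{\bs u^\epsilon}$ so that, after substituting the viscous equation, the net right-hand side takes the form $\mu_0^\epsilon + \partial_\xi \mu_1^\epsilon$ with $\mu_1^\epsilon$ proportional to $\epsilon (\partial_x w^\epsilon)^2 \bs \Theta(\xi, z)$ localized on $\{w^\epsilon = \xi\}$. This is non-negative by Proposition \ref{prop:localspeed}(1), and a symmetric argument produces $\nu_0^\epsilon, \nu_1^\epsilon \geq 0$.

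To close, observe that the $(t,x)$-projection of $|\mu_0^\epsilon|$ is bounded pointwise by $\epsilon|\partial_x \bs u^\epsilon|^2$ with smooth bounded coefficients; strict hyperbolicity turns this into a constant multiple of $\epsilon(|\partial_x w^\epsilon|^2 + |\partial_x z^\epsilon|^2)$, which by the computation of $\mu_1^\epsilon$ above is exactly a multiple of the projection of $\mu_1^\epsilon + \nu_1^\epsilon$. This is (2) at the viscous level. The energy bound from the first step yields uniform local finiteness of all four measures, so up to subsequence they converge weakly-$\ast$ to Radon measures $\mu_i, \nu_i$ on $\Omega \times (\ubar w, \bar w)$, $\Omega \times (\ubar z, \bar z)$. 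Positivity of $\mu_1, \nu_1$ and the inequalities (2)--(3) pass to the limit by lower semicontinuity, while \eqref{eq:kin22}--\eqref{eq:kin221} follow from passing to the limit in the viscous identities, using $\bs u^\epsilon \to \bs u$ in $\mathbf L^1_{\text{loc}}$ together with continuity of $\bs \chi[\xi], \bs \psi[\xi]$ in $\bs u$.

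The main obstacle is the distributional computation of $D^2 \bs \chi[\xi]$ and the correct isolation of the positive piece $\mu_1^\epsilon$. The singular entropies are not $C^2$, so one must be meticulous in separating the Dirac contribution (which enters $\mu_1^\epsilon$) from the $\delta^\prime$ contribution (which must be absorbed into $\partial_\xi\mu_1^\epsilon$), and in checking that the interaction between $\partial_t \bs \chi_{\bs u^\epsilon}$ and $\partial_x \bs \psi_{\bs u^\epsilon}$ governed by \eqref{eq:efluxxi} preserves the sign of $\mu_1^\epsilon$. A secondary difficulty is that $\mu_1^\epsilon$ is \emph{a priori} a distribution of higher order concentrated on the level set $\{w^\epsilon = \xi\}$; one must argue, via the coarea formula together with the pointwise positivity of $\bs \Theta$ in the relevant range, that after integration against a test function in $\xi$ it is a genuine non-negative Radon measure.
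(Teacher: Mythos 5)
This theorem is not proved in the paper under review: it is quoted verbatim from [AMT25, Theorem~1.1], so there is no internal argument to compare against. Your sketch reconstructs the standard vanishing-viscosity derivation of kinetic formulations (Lions--Perthame--Tadmor, Perthame--Tzavaras), which is indeed the route of the cited reference: the uniform bound $\eps\iint|\partial_x\bs u^\eps|^2\lesssim\int E(\bs u_0)$ on truncated cones yields (3); the $\delta'(w^\eps-\xi)$ part of $D^2\bs\chi[\xi]$ contracted with $\eps(\partial_x w^\eps)^2$ produces $\partial_\xi\mu_1^\eps$ with $\mu_1^\eps=\eps(\partial_x w^\eps)^2\,\bs\Theta[\xi](\xi,z^\eps)\,\delta_{\{\xi=w^\eps\}}\ge 0$, positivity coming from $\bs\Theta[\xi](\xi,\cdot)=g(\xi,\cdot)>0$ on the compact rectangle $\mc W$; and the remaining first-order singular terms (the plain Dirac with coefficient $2\partial_w\bs\Theta$, which belongs in $\mu_0^\eps$ rather than $\mu_1^\eps$ as one reading of your last paragraph suggests) together with the smooth Hessian part form $\mu_0^\eps$, whose $(t,x)$-projection is dominated by $\eps|\partial_x\bs u^\eps|^2\lesssim(\mathtt p_{t,x})_\sharp(\mu_1^\eps+\nu_1^\eps)$, giving (2) before the limit. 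The outline is correct, and the technical point you flag --- making the chain rule rigorous for the discontinuous entropies --- is handled in the standard way by integrating against $\rho(\xi)\,\dif\xi$ so that only the genuinely $C^{1,1}$ entropies $\eta_\rho=\int\bs\chi[\xi]\rho(\xi)\dif\xi$ are ever differentiated twice.
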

Here $\mathtt p_{t,x}$ denotes the canonical projection on the $t,x$ variables. We recall that given a measurable map $f: X \to Y$ between measure spaces $X, Y$, for any $\mu \in \msc M(X)$ the pushforward measure $f_\sharp \mu \in \msc M(Y)$ is defined by 
$$
f_\sharp \mu(A) = \mu(f^{-1}(A)) \qquad \forall \; \text{measurable} \; A \subset Y.
$$

In the following $\bs u(t,x)$ is a vanishing viscosity solution such that $\bs u(0, x) -\widehat{\bs u} \in \mathbf L^1$, for some $\widehat{\bs u} \in \mc U$, and $\widehat{w} = \phi_1(\widehat{\bs u})\in (\ubar w, \bar w)$. In the following, we will focus on estimating $[w(t,x)-\widehat{w}]^+$, the estimate for the negative part being entirely symmetric.

 For $r > 0$ and $\ell \in [\widehat{w}, \bar w-r]$ we define 
\begin{equation}\label{eq:chirldef}
\bs \chi^{r, \ell}(t,x,\xi) := \bs \chi_{\bs u}(t,x,\xi) \mathbf 1_{\{\ell < \xi < \ell + r\}}(t,x,\xi)
\end{equation}
\begin{equation}
    \bs \psi^{r, \ell}(t,x,\xi) := \bs \psi_{\bs u}(t,x,\xi) \mathbf 1_{\{\ell < \xi < \ell + r\}}(t,x,\xi).
\end{equation}

\begin{lemma}\label{lemma:kinloc}
    In the notation defined above, there are positive, locally finite measures $ \ms f^{\ell+ r}_{\mr{in}}, \ms f^{\ell}_{\mr{out}}$ such that:
    \begin{enumerate}
        \item[$(i)$] The following equation holds in the sense of distributions:
    \begin{equation}\label{eq:kinloc}
    \begin{aligned}
    \partial_t \bs \chi^{r,\ell}  + \partial_x  \bs \psi^{r, \ell}  & = \partial_{\xi} [ \mu_1 \cdot \mathbf 1_{\{\ell < \xi < \ell+r\}}] \\
    & +  \mu_0\cdot \mathbf 1_{\{\ell < \xi <\ell+r\}} + \ms f^{\ell+ r}_{\mr{in}} - \ms f^{\ell}_{\mr{out}}  \qquad \text{in $\msc D^\prime_{t,x,\xi}$}.
        \end{aligned}
        \end{equation}
\item[$(ii)$] There is $C >0$, depending only on the system, such that 
        \begin{equation}
           \iint_{\mathbb R^+ \times \mathbb R^2} \dif\,  \ms f^{\ell+ r}_{\mr{in}}(t,x, \xi) \leq C \cdot \Vert{\bs u_0} - \widehat{\bs u}\Vert_{\mathbf L^1}, \quad  \iint_{\mathbb R^+ \times \mathbb R^2} \dif\,  \ms f^{\ell}_{\mr{out}}(t,x, \xi) \leq C \cdot \Vert{\bs u_0} - \widehat{\bs u}\Vert_{\mathbf L^1}
        \end{equation}
        \item[$(iii)$] For every time $t> 0$ it holds:
\begin{equation}\label{eq:(iii)}
    \int_{\mathbb R^2} |\bs \chi^{r, \ell}(t,x,\xi)| \dif x \dif \xi < C \|\bs u_0-\widehat{\bs u}\|_{\mathbf L^1}
\end{equation}
        \end{enumerate}

\end{lemma}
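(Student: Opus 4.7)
The plan is to localize the kinetic equation \eqref{eq:kin22} to the $\xi$-strip $(\ell, \ell + r)$, treating its two endpoints as boundaries; the fluxes $\ms f^{\ell+r}_{\mr{in}}$ and $\ms f^{\ell}_{\mr{out}}$ arise as the natural boundary traces of the positive measure $\mu_1$. I would first establish (i) by a mollification-and-limit argument, then deduce (ii) from Theorem \ref{thm:kin}(3), and finally derive (iii) by integrating \eqref{eq:kinloc} in $(x,\xi)$. The hard step is the rigorous identification of the trace measures in the limit, since $\mu_1$ need not admit a well-defined slice at every level $\xi = \ell$; positivity of $\mu_1$ and a careful compactness argument are essential.

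For (i), mollify $\mathbf 1_{\{\ell < \xi < \ell + r\}}$ by smooth cutoffs $\chi_\eps(\xi)$ with $\partial_\xi \chi_\eps = \rho_\eps(\xi - \ell) - \rho_\eps(\xi - \ell - r)$, where $\rho_\eps$ is a standard mollifier. Testing \eqref{eq:kin22} against $\varphi \chi_\eps$ for $\varphi \in C^\infty_c$ and integrating by parts in $\xi$ gives
\begin{equation*}
\langle \partial_t \bs \chi^{r,\ell}_\eps + \partial_x \bs \psi^{r,\ell}_\eps, \varphi\rangle = \langle \mu_0, \varphi\chi_\eps\rangle - \langle \mu_1, \chi_\eps \partial_\xi \varphi \rangle - \langle \mu_1, \varphi\, \partial_\xi \chi_\eps\rangle,
\end{equation*}
where $\bs \chi^{r,\ell}_\eps := \chi_\eps \bs \chi_{\bs u}$. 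As $\eps \to 0$, the first two terms on the right converge to $\langle \mu_0 \mathbf 1_{\{\ell<\xi<\ell+r\}}, \varphi\rangle$ and $\langle \partial_\xi(\mu_1 \mathbf 1_{\{\ell<\xi<\ell+r\}}), \varphi\rangle$, respectively. For the third term, positivity of $\mu_1$ and weak-$\ast$ compactness of Radon measures with uniformly bounded mass allow to extract, along a subsequence, positive limit measures $\ms f^{\ell}_{\mr{out}}$ and $\ms f^{\ell+r}_{\mr{in}}$, concentrated on $\{\xi = \ell\}$ and $\{\xi = \ell + r\}$ respectively. For $\ell$ outside the at most countable set of atoms of the $\xi$-marginal $({\mathtt p_\xi})_\sharp \mu_1$, the limits are unique and independent of the regularization.

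For (ii), we may assume $E(\widehat{\bs u}) = 0$ and $\nabla E(\widehat{\bs u}) = 0$, since subtracting an affine function of $\bs u$ preserves uniform convexity and leaves the dissipation measures $\mu_1, \nu_1$ unchanged. Uniform convexity and the $\mathbf L^\infty$ bound on $\bs u_0$ then give $E(\bs u_0(x)) \leq C |\bs u_0(x) - \widehat{\bs u}|^2 \leq C' |\bs u_0(x) - \widehat{\bs u}|$, hence $\int E(\bs u_0) dx \leq C \|\bs u_0 - \widehat{\bs u}\|_{\mathbf L^1}$. Letting $M, T \to \infty$ in Theorem \ref{thm:kin}(3) yields the global mass bound $|\mu_1|(\mathbb R^+ \times \mathbb R \times (\ubar w, \bar w)) \leq C \|\bs u_0 - \widehat{\bs u}\|_{\mathbf L^1}$; the bounds on $\ms f^{\ell}_{\mr{out}}$ and $\ms f^{\ell+r}_{\mr{in}}$ then follow by construction as weak-$\ast$ limits of restrictions of $\mu_1$.

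Finally, for (iii), I test \eqref{eq:kinloc} against a sequence of cutoffs approximating the indicator of $[0, t] \times \mathbb R \times \mathbb R$. The $\partial_x \bs \psi^{r,\ell}$ term has no net contribution in the limit, by a truncation argument that uses finite propagation speed and the density of compactly supported perturbations of $\widehat{\bs u}$ in $\mathbf L^1$; the interior $\partial_\xi$ term drops because $\mu_1 \mathbf 1_{\{\ell < \xi < \ell+r\}}$ vanishes outside the strip. One thereby obtains a conservation identity expressing $\int \bs \chi^{r,\ell}(t, x, \xi) dx d\xi$ as its initial value plus the integrals of $\mu_0 \mathbf 1$, $\ms f^{\ell+r}_{\mr{in}}$, and $-\ms f^{\ell}_{\mr{out}}$ over $[0, t]$. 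The initial term is controlled via $|\bs \chi^{r,\ell}(0,x,\xi)| \leq \|\bs \Theta\|_\infty \mathbf 1_{\{\ell < \xi < w_0(x)\}} \mathbf 1_{\{\xi < \ell+r\}}$, giving $\int \bs \chi^{r,\ell}(0) dx d\xi \leq C\int ((w_0 - \ell)^+ \wedge r) dx \leq C \|\bs u_0 - \widehat{\bs u}\|_{\mathbf L^1}$ via the Lipschitz relation between $\bs u$ and $w = \phi_1(\bs u)$ (using $\ell \geq \widehat{w}$). The $\mu_0$ contribution is bounded via Theorem \ref{thm:kin}(2) and the mass bound from (ii), and the flux contributions by (ii) directly, which together yield \eqref{eq:(iii)}.
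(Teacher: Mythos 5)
Your overall strategy (localize the kinetic equation in the $\xi$-strip and read off the boundary traces of $\mu_1$) is the same as the paper's, but the step where you actually produce $\ms f^{\ell}_{\mr{out}}$, $\ms f^{\ell+r}_{\mr{in}}$ and bound their masses has a genuine gap. The candidate traces are limits of $\frac{1}{\eps}\,\mu_1\llcorner\big(\mathbb R^+\times\mathbb R\times(\ell,\ell+\eps)\big)$, not of plain restrictions of $\mu_1$: because of the $1/\eps$ normalization, finiteness of $\mu_1$ gives no uniform mass bound on this family (take $\mu_1$ with $\xi$-marginal density $\sim(\xi-\ell)^{-1/2}$ near $\ell$ and the masses blow up), so ``weak-$\ast$ compactness of Radon measures with uniformly bounded mass'' cannot be invoked, and in (ii) the masses of the traces certainly do not ``follow by construction'' from the mass of $\mu_1$. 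The paper's way out is to test \eqref{eq:kin22} with $\varphi\, g_\eps$, where $g_\eps$ is the one-sided ramp equal to $0$ for $\xi\le\ell$ and to $1$ for $\xi\ge\ell+\eps$: then $\frac{1}{\eps}\int\varphi\,\mathbf 1_{(\ell,\ell+\eps)}\,\dif\mu_1$ is expressed as a sum of terms ($\mu_0$- and $\mu_1$-integrals against $\varphi g_\eps$ and $\varphi_\xi g_\eps$, plus $\int(\varphi_t\bs\chi_{\bs u}+\varphi_x\bs\psi_{\bs u})g_\eps$), each of which converges as $\eps\to0$; hence the limit exists as a distribution for \emph{every} $\ell$ (no exceptional set of atoms is needed, which matters since the lemma is later applied at prescribed levels $\ell_i$), and positivity of $\mu_1$ upgrades it to a positive measure. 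The mass bound (ii) then comes from taking $\varphi\uparrow\mathbf 1_{(0,T)\times\mathbb R}$ in this identity and estimating the three resulting terms: $|\mu_0|((0,T)\times\mathbb R^2)$ via Theorem \ref{thm:kin}, the time-$T$ term $-\int\mathbf 1_{\xi\ge\ell}\,\bs\chi[\xi](\bs u(T,\cdot))$ via the negative-part estimate \eqref{eq:negpart} (Proposition \ref{prop:localspeed} combined with a Chebyshev argument using the entropy $E$), and the initial-datum term as you do. Without this mechanism both the existence and the $\mathbf L^1$ bound of the trace measures remain unproved.

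A secondary point: in (iii) your conservation identity controls $\int\bs\chi^{r,\ell}(t,x,\xi)\,\dif x\,\dif\xi$, a signed quantity, whereas \eqref{eq:(iii)} asks for $\int|\bs\chi^{r,\ell}(t,x,\xi)|\,\dif x\,\dif\xi$. Since $\bs\chi[\xi](\bs u)$ can be negative when $\xi<\phi_1(\bs u)-\bar r$, you must separately bound the negative part of $\bs\chi^{r,\ell}(t,\cdot,\cdot)$ at the fixed time $t$; this is again \eqref{eq:negpart}. Your outline for (iii) otherwise matches the paper's argument.
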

\begin{proof}

We first prove $(i)$. For $0 < \eps < r/2$, define an $\eps^{-1}$-Lipschitz function $h_\eps$ by
    $$
    h_\eps(\xi)\doteq \begin{cases}
        0 & \text{if $\xi \leq \ell$} \\
        (\xi-\ell)\eps^{-1} & \text{ if $\ell \leq \xi \leq \ell +\eps$}\\
         1 & \text{if $\ell +\eps \leq \xi \leq \ell+r-\eps$} \\
         (\ell+r-\xi)\eps^{-1} & \text{if $\ell+r-\eps \leq \xi \leq \ell+r$} \\
         0 & \text{if $\xi \geq \ell + r$}
    \end{cases}
    $$
    and notice that 
    $$
    \bs \chi_{\bs u} h_\eps \; \longrightarrow \; \bs \chi^{r, \ell} \qquad \text{in $\mathbf L^1_{loc}$ as $\eps \to 0^+$}.
    $$
For any test function $\varphi(t,x,\xi)$ we compute
\begin{equation}\label{eq:chi+comp}
\begin{aligned}
    \int_{\mathbb R^+ \times \mathbb R^2} & \big[ \varphi_t \bs \chi^{r, \ell} + \varphi_x\bs \psi^{r, \ell}\big]\dif \xi \dif x \dif t \\
    & = \lim_{\eps \to 0^+} \int_{\mathbb R^+ \times \mathbb R^2} \big[ \varphi_t h_\eps\bs \chi_{\bs u} + \varphi_x h_\eps\bs \psi_{\bs u}\big]\dif \xi \dif x \dif t \\
    & =  \lim_{\eps \to 0^+} \int_{\mathbb R^+ \times \mathbb R^2}  (\varphi  h^\prime_\eps + \varphi_\xi h_\eps) \dif \mu_1  -  \int_{\mathbb R^+ \times \mathbb R^2} \varphi h_\eps \dif \mu_0.
\end{aligned}
\end{equation}
We now compute
\begin{equation}\label{eq:49}
\begin{aligned}
    -\int_{\mathbb R^+ \times \mathbb R^2} \varphi h_\eps^\prime \dif \mu_1 & =  \frac{1}{\eps}\int_{\mathbb R^+ \times \mathbb R^2} \varphi \mathbf 1_{\{\ell \leq \xi \leq \ell +\eps\}}(\xi) \dif \mu_1 \\
    & -\frac{1}{\eps}\int_{\mathbb R^+ \times \mathbb R^2} \varphi \mathbf 1_{\{\ell+r-\eps \leq \xi \leq \ell+r \}}(\xi) \dif \mu_1
\end{aligned}
\end{equation}
Next we compute the limits
in the right hand side, we start with the first term. Define a test function 
$$
g_\eps(\xi) = \begin{cases}
     0 & \text{if $\xi \leq \ell$} \\
        (\xi-\ell)\eps^{-1} & \text{ if $\ell \leq \xi \leq \ell +\eps$}\\
         1 & \text{if $\ell +\eps \leq \xi $}.
\end{cases}
$$
Now, for every $\varphi \in C^1_c(\mathbb R^+ \times \mathbb R^2)$ we test $\varphi g_\eps$ against \eqref{eq:kin22}, so that we obtain using \eqref{eq:49}
$$
\begin{aligned}
\int_{\mathbb R^+ \times \mathbb R^2} \varphi g_\eps^\prime \dif \mu_1  & =\int_{\mathbb R^+ \times \mathbb R^2} \varphi g_\eps \dif \mu_0 + \int_{\mathbb R^+ \times \mathbb R^2} \big( \varphi_t \bs \chi_{\bs u} + \varphi_x \bs \psi_{\bs u}\big) g_\eps \dif x \dif t \dif \xi  - \int_{\mathbb R^+ \times \mathbb R^2} \varphi_{\xi} g_\eps \dif \mu_1.
\end{aligned}
$$
Now taking the limit we obtain 
\begin{equation}\label{eq:limmu1eps}
\begin{aligned}
   \lim_{\eps \to 0^+} & \int_{\mathbb R^+ \times \mathbb R^2} \varphi  \dif \Big( \frac{1}{\eps}\mu_1\llcorner \, \big(\mathbb R^+ \times \mathbb R \times (\ell, \ell + \eps)  \big)\Big) \\
    = & \int_{\mathbb R^+ \times \mathbb R^2} \varphi \mathbf 1_{\{\xi > \ell\}}(\xi) \dif \mu_0 - \int_{\mathbb R^+ \times \mathbb R^2} \varphi_\xi \mathbf 1_{\{\xi > \ell\}} \dif \mu_1 \\
    + & \int_{\mathbb R^+ \times \mathbb R^2}  \big( \varphi_t \bs \chi[\xi](\bs u) + \varphi_x \bs \psi[\xi](\bs u)\big)\mathbf 1_{\xi > \ell}(\xi) \dif x \dif t \dif \xi.
\end{aligned}
\end{equation}
Therefore we deduce that the limit 
$$
\ms f^{\ell}_{\mr{out}} \doteq \lim_{\eps \to 0^+} \frac{1}{\eps}\mu_1\llcorner \, \big(\mathbb R^+ \times \mathbb R \times (\ell, \ell + \eps)  \big)
$$
exists in the sense of distributions; moreover, since $\mu_1$ it is a positive measure, the limit must be a positive distribution and therefore is a locally finite positive measure that we call $\ms f^{\ell}_{\mr{out}} \in \msc M(\mathbb R^+\times \mathbb R^2)$. Doing the same for the second term and inserting the result in \eqref{eq:chi+comp} we deduce
\begin{equation}\label{eq:chi+comp1}
\begin{aligned}
    \int_{\mathbb R^+ \times \mathbb R^2} \big[ \varphi_t \bs \chi^{r, \ell}&  + \varphi_x\bs \psi^{r, \ell}\big]\dif \xi \dif x \dif t  = \int_{\mathbb R^+ \times \mathbb R^2} \varphi \dif ({\ms f^\ell_{\ms{out}} - \ms f^{\ell +r}_{\ms{in}}}) \\
    & + \int_{\mathbb R^+\times \mathbb R^2} \varphi_{\xi} \mathbf 1_{\{\ell < \xi < \ell + r\}} \dif \mu_1 -  \int_{\mathbb R^+ \times \mathbb R^2} \varphi \mathbf 1_{\{\ell < \xi < \ell + r\}} \dif \mu_0. 
\end{aligned}
\end{equation}

Next, we turn to the proof of $(ii)$.  First, we notice that, since $E$ is uniformly convex and smooth, up to summing an affine function to $E$, we can assume that $E \geq 0$, $E(\widehat{\bs u}) = 0$, and that for some $c > 0$
\begin{equation}\label{eq:Esquare}
\frac{1}{c}|\bs u- \widehat{\bs u}|^2 \leq E(\bs u) \leq c |\bs u- \widehat{\bs u}|^2 \qquad \text{$\forall \, \bs u \in \mc U$}.
\end{equation}
Therefore we have
$$
\int_{\mathbb R} E(\bs u_0(x)) \dif x \leq c \int_{\mathbb R} |\bs u_0(x) - \widehat{\bs u}|  \dif x.
$$
This implies that letting $M \to +\infty$ in point 3 of Theorem \ref{thm:kin}, we obtain 
\begin{equation}\label{eq:muiglobal}
|\mu_i|((0,T) \times \mathbb R) \leq C \| \bs u_0-\widehat{\bs u}\|_{\mathbf L^1(\mathbb R)}< +\infty.
\end{equation}
By (1) of Proposition \ref{prop:localspeed}, there holds
$$
\bs \chi[\xi](\bs u(t,x)) <0 \quad \Longrightarrow \quad \xi < \phi_1(\bs u(t,x))- \bar r
$$
and therefore
\begin{equation}\label{eq:chiL1}
\begin{aligned}
     \int_{\mathbb R^+\times \mathbb R} & \mathbf 1_{\xi \geq \ell}(\xi) [\bs \chi[\xi]({\bs u}(t,x))]^- \dif x\dif \xi \\
     & \leq (\bar w- \ell) \sup_{t,x, \xi} |\bs \chi| \mathscr L^1\Big(\big\{x \in \mathbb R\; | \; \phi_1(\bs u(t,x)) > \ell + \bar{r}  \big\}\Big)
\end{aligned}
\end{equation}
where we used that $\bar w \geq \xi \geq \ell$ in the integrand.
Here if $a \in \mathbb R$:
$$
[a]^- := \max\{0, -a\}, \qquad [a]^+ := \max\{0, a\}.
$$
Moreover, by the Markov's inequality, using $\phi_1(\widehat{\bs u}) < \ell$,
\begin{equation}\label{eq:chiL2}
\begin{aligned}
    \bar r^2 \mathscr L^1\Big(\big\{x \in \mathbb R\; | \; \phi_1(\bs u(t,x)) > \ell + \bar{r}  \big\}\Big) 
    & \leq   \int_{\mathbb R} |\phi_1(\bs u(t,x))- \ell|^2 \dif x \\
    & \leq \int_{\mathbb R} |\phi_1(\bs u(t,x))- \phi_1(\widehat{\bs u})|^2 \dif x \\
    & \leq \mr{Lip}(\phi_1) \int_{\mathbb R} |\bs u(t,x)-\widehat{\bs u}|^2 \dif x\\
    & \leq c \,\mr{Lip}(\phi_1)  \int_{\mathbb R} E(\bs u(t,x))\dif x \\
    & \leq c \,\mr{Lip}(\phi_1)   \int_{\mathbb R} E(\bs u_0(x))\dif x  \\
    & \leq c^2 \,\mr{Lip}(\phi_1)  \| \bs u_0-\widehat{\bs u}\|_{\mathbf L^2}^2.
     \end{aligned}
\end{equation}
Therefore combining \eqref{eq:chiL1}, \eqref{eq:chiL2}, we find that there is a positive constant $C$ such that
\begin{equation}\label{eq:negpart}
    \begin{aligned}
     \int_{\mathbb R^+\times \mathbb R} & \mathbf 1_{\xi \geq \ell}(\xi) [\bs \chi[\xi]({\bs u}(t,x))]^- \dif x\dif \xi \leq C \| \bs u_0-\widehat{\bs u}\|_{\mathbf L^1}
\end{aligned}
\end{equation}

Then we choose 
\begin{equation}\label{eq:varphi}
\varphi(t,x) = \mathbf 1_{(0,T)\times \mathbb R}(t,x) \qquad \forall \; t, x \in \mathbb R^+ \times \mathbb R
\end{equation}
and we compute, using \eqref{eq:limmu1eps} and a standard mollification argument on $\varphi$,
$$
\begin{aligned}
\int_{\mathbb R^+ \times \mathbb R^2} \dif \ms f^{\ell, r}_{\ms {out}}(x, t, \xi) &  \leq \lim_{T\to \infty} |\mu_0|((0,T) \times \mathbb R^2))  -\lim_{T \to \infty} \int_{\mathbb R^+\times \mathbb R} \mathbf 1_{\xi \geq \ell}(\xi) \bs \chi[\xi]({\bs u}(T,x)) \dif x\dif \xi \\ 
& + \int_{\mathbb R^+\times \mathbb R} \mathbf 1_{\xi \geq \ell}(\xi) \bs \chi[\xi](\bs u_0) \dif x\dif \xi \leq C \|\bs u_0 - \widehat{\bs u}\|_{\mathbf L^1(\mathbb R)}
\end{aligned}
$$
where $C >0$ is another constant, and we used \eqref{eq:muiglobal} to estimate the first term,  \eqref{eq:negpart} to estimate the second term, and finally the fact that the third integrand is not zero only for $\xi$ such that $\ell \leq \xi \leq w_0(x)$ with $\phi_1(\widehat{\bs u}) \leq \ell$ implies that 
\begin{equation}\label{eq:initdatachi}
    \int_{\mathbb R^+\times \mathbb R} \mathbf 1_{\xi \geq \ell}(\xi) \bs \chi[\xi](\bs u_0) \dif x\dif \xi \leq \sup \, \bs \chi \|\bs u_0 - \widehat{\bs u}\|_{\mathbf L^1(\mathbb R)}
\end{equation}

An entirely similar argument proves that 
$$
 \ms f^{\ell+ r}_{\mr{in}} \doteq \lim_{\eps \to 0^+} \frac{1}{\eps}\mathbf 1_{\{\ell+r-\eps \leq \xi \leq \ell+r \}}(\xi) \cdot  \mu_1.
$$
and
$$
\int_{\mathbb R^+ \times \mathbb R^2} \dif \ms f^{\ell, r}_{\ms {in}}(x, t, \xi) \leq C \cdot \Vert{\bs u_0}-\widehat{\bs u}\Vert_{\mathbf L^1}
$$

To prove (iii), we again test \eqref{eq:chi+comp1} with $\varphi$ as in \eqref{eq:varphi}, and we obtain 
$$
\begin{aligned}
\int_{\mathbb R^2}& \mathbf 1_{\xi \geq \ell}(\xi) [\bs \chi[\xi](\bs u(T,x))]^+ \dif x \dif \xi \leq - \int_{\mathbb R^2} \mathbf 1_{\xi \geq \ell}(\xi) [\bs \chi[\xi](\bs u(T,x))]^- \dif x \dif \xi \\
& +\int_{\mathbb R^2} \mathbf 1_{\xi \geq \ell}(\xi) \bs \chi[\xi](\bs u_0(x)) \dif x \dif \xi + \int_0^T \int_{\mathbb R^2} \dif \big(\ms f_{\ms{in}}^{\ell +r} +\ms f_{\ms{out}}^\ell + |\mu_0| \big)
\end{aligned}
$$
Using \eqref{eq:negpart}, \eqref{eq:initdatachi},  \eqref{eq:muiglobal} and point (ii) to estimate all the terms in the right hand side, we finally obtain \eqref{eq:(iii)}.
\end{proof}

We introduce the following function
\begin{equation}\label{eq:gdef}
g^{r,\ell}(t,x) \doteq \begin{cases}
    w(t,x)-\ell & \text{if $\ell \leq w(t,x) \leq \ell + r$}, \\
    0 & \text{if $w(t,x) < \ell$}, \\
    r & \text{if $w(t,x) > \ell+ r$}.
\end{cases}
\end{equation}

In the following proposition we prove a localized version of the decay estimate, for the function $g^{r, \ell}$ in \eqref{eq:gdef}.

\begin{prop}\label{prop:locdecayGNL}
Assume that the eigenvalue $\lambda_1$ is genuinely nonlinear and let $g^{r, \ell}$ be defined as in \eqref{eq:gdef}, with $r \leq \bar r$. Then there is $C >0$ such that
    \begin{equation}
    \begin{aligned}
    \int_0^{+\infty}\int_\mathbb R\big( g^{r,\ell}(t,x)\big)^4 \dif x \dif t & \leq C \cdot \Big(\Vert{\bs u_0} -\widehat{\bs u}\Vert_{\mathbf L^1} + \Vert{\bs u_0} -\widehat{\bs u}\Vert_{\mathbf L^1}^2\Big)\\
    & + C \cdot \msc L^2\Big( \big\{(t,x) \in \mathbb R^+ \times \mathbb R \; | \; w(t,x) \geq  \ell +r \big\} \Big).
        \end{aligned}
    \end{equation}
\end{prop}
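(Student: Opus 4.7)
The plan is to adapt the interaction-functional argument sketched in Subsection 4.1 to the localized kinetic equation \eqref{eq:kinloc}. Introduce
\begin{equation*}
\mc I(t)\doteq \int_{\mathbb R^4}\bs \chi^{r,\ell}(t,x,\xi)\,\bs \chi^{r,\ell}(t,x',\xi')\,(\xi-\xi')\,\mathbf 1_{\{\xi>\xi'\}}\,\mathbf 1_{\{x<x'\}}\dif x\dif x'\dif \xi\dif \xi',
\end{equation*}
which, by Lemma \ref{lemma:kinloc}(iii) and the bound $|(\xi-\xi')\mathbf 1_{\{\xi>\xi'\}}|\leq r$, satisfies $|\mc I(t)|\leq Cr\|\bs u_0-\widehat{\bs u}\|_{\mathbf L^1}^2$ at every time.

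Differentiating $\mc I$ formally along \eqref{eq:kinloc}, the transport contribution is obtained by integrating by parts in $x,x'$: the distributional derivative of $\mathbf 1_{\{x<x'\}}$ produces $\pm \delta_{\{x=x'\}}$, collapsing the double space integral to a single one, and after using $\bs \psi^{r,\ell}=\bs \lambda_1[\xi]\bs \chi^{r,\ell}$ from \eqref{eq:lambdadef} one obtains
\begin{equation*}
\dot{\mc I}_{\mr{transp}}(t)=\int_{\mathbb R^3}\bs \chi^{r,\ell}(t,x,\xi)\bs \chi^{r,\ell}(t,x,\xi')\bigl(\bs \lambda_1[\xi'](\bs u)-\bs \lambda_1[\xi](\bs u)\bigr)(\xi-\xi')\mathbf 1_{\{\xi>\xi'\}}\dif x\dif \xi\dif \xi'.
\end{equation*}
By genuine nonlinearity and Proposition \ref{prop:localspeed}(2), $\bs \lambda_1[\xi']-\bs \lambda_1[\xi]\leq -c(\xi-\xi')$ whenever $\xi>\xi'$ lie inside the strip. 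Combined with the strict positivity $\bs \chi[\xi](\bs u)\geq c>0$ on the region $\{\xi\leq \phi_1(\bs u)\leq \xi+\bar r\}$ from Proposition \ref{prop:localspeed}(1), and the assumption $r\leq \bar r$, performing the $\xi,\xi'$ integration for each fixed $x$ with $w(t,x)\leq \ell+r$ yields $c''\,g^{r,\ell}(t,x)^4$, so
\begin{equation*}
\dot{\mc I}_{\mr{transp}}(t)\leq -c''\int_{\mathbb R}g^{r,\ell}(t,x)^4\dif x+C'\msc L^1\bigl(\{x\in\mathbb R:w(t,x)\geq \ell+r\}\bigr),
\end{equation*}
where the additive error accounts both for the saturated region $\{w\geq \ell+r\}$ (where $g^{r,\ell}\equiv r$ cannot be recovered as a dispersive quantity) and for any loss of pointwise positivity of $\bs \chi$ when $\phi_1(\bs u)$ exceeds $\ell+\bar r$; after time integration this produces the $\msc L^2$ term on the right-hand side of the proposition.

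The remaining terms on the right-hand side of \eqref{eq:kinloc} are error terms. The boundary fluxes $\ms f^{\ell+r}_{\mr{in}},\ms f^\ell_{\mr{out}}$, tested against $\bs \chi^{r,\ell}(t,x',\xi')(\xi-\xi')\mathbf 1_{\{\xi>\xi'\}}\mathbf 1_{\{x<x'\}}$ and its symmetric counterpart, produce bilinear expressions in which one factor has total mass $\leq C\|\bs u_0-\widehat{\bs u}\|_{\mathbf L^1}$ by Lemma \ref{lemma:kinloc}(ii), the other is $L^\infty_{x,\xi}$-bounded, and the extra factor $(\xi-\xi')\leq r$ makes the whole contribution $O(\|\bs u_0-\widehat{\bs u}\|_{\mathbf L^1}^2)$ after time integration via Lemma \ref{lemma:kinloc}(iii). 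The terms $\mu_0\mathbf 1_{\{\ell<\xi<\ell+r\}}$ and $\partial_\xi[\mu_1\mathbf 1_{\{\ell<\xi<\ell+r\}}]$ are controlled via the global bound $|\mu_i|(\mathbb R^+\times \mathbb R^2)\leq C\|\bs u_0-\widehat{\bs u}\|_{\mathbf L^1}$ established in \eqref{eq:muiglobal} (itself derived from Theorem \ref{thm:kin}(3) together with $E\lesssim|\cdot-\widehat{\bs u}|^2\leq \|\bs u_0-\widehat{\bs u}\|_{\mathbf L^\infty}|\cdot-\widehat{\bs u}|$); after integration by parts in $\xi$ the kernel becomes $\mathbf 1_{\{\xi>\xi'\}}$, and pairing with the $L^1_{x,\xi}$-bound on $\bs \chi^{r,\ell}$ yields the linear contribution $C\|\bs u_0-\widehat{\bs u}\|_{\mathbf L^1}$.

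Integrating $\dot{\mc I}$ over $[0,T]$ and letting $T\to+\infty$, using the two-sided bound on $\mc I$, gives the target estimate. The main obstacle will be to justify the formal integration by parts against the discontinuous kernel $(\xi-\xi')\mathbf 1_{\{\xi>\xi'\}}\mathbf 1_{\{x<x'\}}$ paired with measure-valued right-hand side. This will be done by a standard two-parameter mollification in $(x,\xi)$ and $(x',\xi')$ followed by passage to the limit, using crucially the explicit realization of $\ms f^{\ell+r}_{\mr{in}},\ms f^\ell_{\mr{out}}$ as thin-$\xi$-slab limits of $\mu_1$ from the proof of Lemma \ref{lemma:kinloc} to ensure that all traces at the lateral boundaries $\xi=\ell,\ell+r$ are well defined and match.
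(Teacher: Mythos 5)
Your proposal is correct and follows essentially the same route as the paper: the same weighted interaction functional $\int(\xi-\xi')^+\mathbf 1_{\{x<x'\}}\bs\chi^{r,\ell}\bs\chi^{r,\ell}$, the same a priori bound via Lemma \ref{lemma:kinloc}(iii), the same collapse to the diagonal and use of Proposition \ref{prop:localspeed} to extract the quartic decay plus the $\msc L^2(\{w\ge\ell+r\})$ error, the same treatment of $\mu_0$, $\partial_\xi\mu_1$ and the boundary fluxes, and the same mollification to justify the formal computation. No substantive differences to report.
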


\begin{proof}
We introduce the following interaction functional
\begin{equation}
\mc Q(t) :=  \int_{\mathbb R^4}  \mathbf 1_{\xi^\prime \leq \xi}(\xi, \xi^\prime) \mathbf 1_{x \leq x^\prime}(x, x^\prime) (\xi-\xi^\prime)\, \bs \chi^{r, \ell}(t,x, \xi) \bs \chi^{r, \ell}(t,x^\prime, \xi^\prime) \dif x \dif \xi \dif x^\prime \dif \xi^\prime. 
\end{equation}
 Notice that $\mc Q(t)$ is finite for all $t > 0$ because
 \begin{equation}\label{eq:Qbound}
 \mc Q(t) \leq \bar r\Big( \int_{\mathbb R^2} |\bs \chi^{r, \ell}(t,x, \xi)| \dif x \dif \xi \Big)^2 \leq C \|\bs u_0- \widehat{\bs u}\|_{\mathbf L^1}^2
  \end{equation}
by (iii) of Lemma \ref{lemma:kinloc}.
Let $\varrho : \mathbb R^3 \to \mathbb R$ be smooth, compactly supported, with 
$$
\int_{\mathbb R^3} \varrho = 1
$$
and define 
$$
\varrho^\delta(\cdot) := \frac{1}{\delta^3}\varrho(\frac{\cdot}{\delta}).
$$
Define
$$
\bs \chi^{r,\ell}_\delta  \doteq \bs \chi^{r, \ell} \ast \varrho_\delta, \quad \bs \psi^{r,\ell}_\delta  \doteq \bs \psi^{r, \ell} \ast \varrho_\delta$$
$$
\mu_i^\delta \doteq (\mu_i\mathbf 1_{\{\ell < \xi < \ell + r\}}(\xi)) \ast \varrho_\delta \quad \text{for $i = 0,1$}
$$
and
$$
\ms f_{\ms{in}}^\delta = \ms f^{r, \ell}_{\ms{in}} \ast \varrho_\delta, \qquad \ms f_{\ms{out}}^\delta = \ms f^{r, \ell}_{\ms{out}} \ast \varrho_\delta
$$
Therefore, taking the convolution of \eqref{eq:kinloc} with $\varrho_\delta$, we obtain 
$$
\partial_t \bs \chi^{r,\ell}_\delta + \partial_x \bs \psi^{r,\ell}_\delta = \partial_\xi \mu^{\delta}_1 + \mu^{\delta}_0 + \ms f_{\ms{in}}^\delta  -\ms f_{\ms{out}}^\delta \qquad \forall t > 0, \quad x, \xi \in \mathbb R
$$
Define also 
\begin{equation}\label{eq:func1}
 \mc Q^\delta(t) \doteq  \int_{\mathbb R^4} \mathbf 1_{\xi^\prime \leq \xi}(\xi, \xi^\prime) \mathbf 1_{x \leq x^\prime}(x, x^\prime) (\xi-\xi^\prime)\, \bs \chi_\delta^{r, \ell}(t,x, \xi) \bs \chi_\delta^{r, \ell}(t,x^\prime, \xi^\prime) \dif x \dif \xi \dif x^\prime \dif \xi^\prime. 
  \end{equation}
Since 
$$
\bs \chi_\delta^{r, \ell}(t,x, \xi) \bs \chi_\delta^{r, \ell}(t,x^\prime, \xi^\prime)  \longrightarrow \bs \chi^{r, \ell}(t,x, \xi) \bs \chi^{r, \ell}(t,x^\prime, \xi^\prime) \quad \text{strongly in $\mathbf L^1_{loc}(\mathbb R^+ \times \mathbb R^4)$}
$$
we have for all $T > 0$
$$
\lim_{\delta \to 0} \int_0^T |\mc Q^\delta(t) - \mc Q(t)| \dif t  = 0 
$$
from which we deduce that, up to a subsequence $\{\delta_j\}$, we have
\begin{equation}\label{eq:Qdeltaconv}
\mc Q^{\delta_j}(t) \longrightarrow \mc Q(t) \qquad \text{for a.e. $t > 0$}.
\end{equation}
Compute
 \begin{equation}\label{eq:Qdeltadif}
\begin{aligned}
    \frac{\dif}{\dif t}  \mc Q_\phi^\delta(t)   \doteq & \frac{\dif}{\dif t}\int_{\mathbb R^4} \mathbf 1_{\xi^\prime \leq \xi}(\xi, \xi^\prime) \mathbf 1_{x \leq x^\prime}(x, x^\prime) (\xi-\xi^\prime)\, \bs \chi^{r,\ell}_\delta(t,x, \xi) \bs \chi^{r,\ell}_\delta(t,x^\prime, \xi^\prime) \dif X \\
     =& -\int_{\mathbb R^3}  \mathbf 1_{\xi^\prime \leq \xi}(\xi, \xi^\prime) (\xi-\xi^\prime) \bs \psi^{r,\ell}_\delta(t,x,\xi) \bs \chi^{r,\ell}_\delta(t,x, \xi^\prime)\dif x \dif \xi \dif \xi^\prime \\
    &+  \int_{x \leq x^\prime, \, \xi^\prime \leq \xi} (\xi-\xi^\prime) \left(\partial_\xi \mu_1^\delta(t,x,\xi)+\mu_0^\delta(t,x,\xi)+(\ms f^\delta_{\ms{in}} -\ms f^{\ell, \delta}_{\mr{out}})(t,x,\xi)\right) \bs \chi^{r,\ell}_\delta(t,x^\prime, \xi^\prime) \dif X \\
    & +   \int_{\mathbb R^3} \mathbf 1_{\xi^\prime \leq \xi}(\xi, \xi^\prime)  (\xi-\xi^\prime)\, \bs \psi^{r,\ell}_\delta (t,x,\xi^\prime) \bs \chi^{r,\ell}_\delta(t,x, \xi) \dif x \dif \xi \dif \xi^\prime \\
      &+  \int_{x \leq x^\prime, \, \xi^\prime \leq \xi}   (\xi-\xi^\prime) \left(\partial_{\xi^\prime} \mu_1^\delta(t,x^\prime,\xi^\prime)+\mu_0^\delta(t,x^\prime,\xi^\prime)+(\ms f^\delta_{\ms{in}} -\ms f^{\ell, \delta}_{\mr{out}})(t,x^\prime,\xi^\prime)\right) \bs \chi^{r,\ell}_\delta(t,x, \xi) \dif X \\
     =: \; & I_1(t) + I_2(t) + I_1^\prime(t) + I_2^\prime(t).
\end{aligned}
 \end{equation}
where $\dif X =  \dif x \dif \xi \dif x^\prime \dif \xi^\prime$ denotes integration against the set of variables $(x, \xi, x^\prime, \xi^\prime)$.
We first notice that $I_2(t)$ can be estimated by 
\begin{equation}\label{eq:I2est}
\begin{aligned}
    |I_2(t)|  \leq C \cdot \int_{\mathbb R^2} & \Big(\mu_1^\delta(t,x,\xi) + |\mu^\delta_0(t,x,\xi)| \\
    & + \ms f_{\ms{in}}^\delta(t,x,\xi) + \ms f_{\ms{out}}^\delta(t,x,\xi)\Big) \dif x \dif \xi \int_{\mathbb R^2} |\bs \chi_\delta^{r, \ell}(t,x, \xi)| \dif x \dif \xi\\
    & \leq C \cdot \int_{\mathbb R^2}  \Big(\mu_1^\delta(t,x,\xi) + |\mu_0(t,x,\xi)| \\
     & + \ms f_{\ms{in}}^\delta(t,x,\xi) + \ms f_{\ms{out}}^\delta(t,x,\xi)\Big) \dif x \dif \xi \|\bs u_0-\widehat{\bs u}\|_{\mathbf L^1}
    \end{aligned}
\end{equation}
where we used Lemma \ref{lemma:kinloc} to obtain the last inequality.
Integrating in time and using again Lemma \ref{lemma:kinloc} and \eqref{eq:muiglobal} we obtain for all $T > 0$
\begin{equation}\label{eq:I2est}
    \begin{aligned}
        \int_0^{T} |I_2(t)| \dif t \leq  C  \cdot  \|\bs u_0-\widehat{\bs u}\|_{\mathbf L^1}^2
    \end{aligned}
\end{equation}
where $C$ is a fixed constant depending on the system.
 A completely similar inequality can be obtain for $I^\prime_2(t)$:
\begin{equation}\label{eq:I2est1}
\int_0^{T} |I_2^\prime(t)|\dif t \leq C  \cdot \| \bs u_0- \widehat{\bs u}\|^2_{\mathbf L^1}
\end{equation}
Therefore integrating in time \eqref{eq:Qdeltadif} and usign \eqref{eq:I2est}, \eqref{eq:I2est1}, we get for $T_0 < T$
\begin{equation}\label{eq:Qdeltaineq}
    \begin{aligned}
         \mc Q^\delta(T) -\mc Q^\delta(T_0)  &  \leq \int_{T_0}^T \int_{\mathbb R^3}\mathbf 1_{\{\xi^\prime \leq \xi\}} (\xi-\xi^\prime) \big[ \bs \psi^{r, \ell}_\delta (t, y, \xi^\prime) \bs \chi^{r, \ell}_\delta(t,y, \xi) \\
         & \quad- \bs \psi^{r, \ell}_\delta (t, y, \xi) \bs \chi^{r, \ell}_\delta(t,y, \xi^\prime)\big] \dif \xi \dif \xi^\prime \dif y \dif t 
 + C\cdot  \Vert{\bs u_0}-\widehat{\bs u}\Vert_{\mathbf L^1}.
    \end{aligned}
\end{equation}
Clearly
$$
\Phi_\delta(t,y,\xi, \xi^\prime) := \bs \psi^{r, \ell}_\delta (t, y, \xi^\prime) \bs \chi^{r, \ell}_\delta(t,y, \xi) - \bs \psi^{r, \ell}_\delta (t, y, \xi) \bs \chi^{r, \ell}_\delta(t,y, \xi^\prime)
$$
satisfies
$$
\Phi_\delta \longrightarrow \bs \psi^{r, \ell} (t, y, \xi^\prime) \bs \chi^{r, \ell}(t,y, \xi) - \bs \psi^{r, \ell} (t, y, \xi) \bs \chi^{r, \ell}(t,y, \xi^\prime) \quad \text{strongly in $\mathbf L^1_{loc}(\mathbb R^+ \times \mathbb R^3)$}.
$$
Therefore we can take the limit $\delta \to 0^+$ in \eqref{eq:Qdeltaineq}, using also \eqref{eq:Qdeltaconv} in the left hand side, and we obtain for a.e. $T_0 < T$
\begin{equation}
    \begin{aligned}
         \mc Q(T) -\mc Q(T_0)  &  \leq \int_{T_0}^T \int_{\mathbb R^3}\mathbf 1_{\{\xi^\prime \leq \xi\}} (\xi-\xi^\prime)^+ \big[ \bs \psi^{r, \ell} (t, y, \xi^\prime) \bs \chi^{r, \ell}(t,y, \xi) \\
         & \quad- \bs \psi^{r, \ell} (t, y, \xi) \bs \chi^{r, \ell}(t,y, \xi^\prime)\big] \dif \xi \dif \xi^\prime \dif y \dif t 
 + C\cdot  \Vert{\bs u_0}-\widehat{\bs u}\Vert_{\mathbf L^1}.
    \end{aligned}
\end{equation}

Using Proposition \ref{prop:localspeed}, we further estimate the right hand side above and obtain 
$$
\begin{aligned}
     \mc Q(T) -\mc Q(T_0) & \leq \int_{T_0}^T \int_{ \mathbb R^3} (\xi-\xi^\prime)^+ \big[  \bs \lambda_1[\xi^\prime](\bs u(t,y))\\
     & - \bs \lambda_1[\xi](\bs u(t,y)) \big]\cdot \bs \chi^{r, \ell}(t,y, \xi) \bs \chi^{r, \ell}(t,y, \xi^\prime)\cdot \mathbf 1_{\{w(t,y) \leq \ell +r\}} \dif \xi \dif \xi^\prime \dif y \dif t \\
         & +C \cdot \msc L^2\Big( \big\{(t,x) \in \mathbb R^+ \times \mathbb R \; | \; w(t,x) \geq  \ell +r \big\} \Big)\\
         & + C \cdot \Vert{\bs u_0}-\widehat  {\bs u}\Vert_{\mathbf L^1}.
\end{aligned}
$$
where we used the fact that if $\ell \leq w(t,y) \leq \ell + r$, then for $\xi \in [\ell, w(t,y)]$ we also have $\xi \in (w(t,y)-r, w(t,y))$, so that Proposition \ref{prop:localspeed} applies and we get
$$
\begin{aligned}
\bs \psi^{r, \ell} (t, y, \xi^\prime) & \bs \chi^{r, \ell}(t,y, \xi)  - \bs \psi^{r, \ell} (t, y, \xi) \bs \chi^{r, \ell}(t,y, \xi^\prime) \\
& = \big[  \bs \lambda_1[\xi^\prime](\bs u(t,y))- \bs \lambda_1[\xi](\bs u(t,y)) \big] \bs \chi^{r, \ell}(t,y, \xi) \bs \chi^{r, \ell}(t,y, \xi^\prime).
\end{aligned}
$$
Now again Proposition \ref{prop:localspeed} yields  $\bs \chi^{r, \ell}(t,y, \xi) > c> 0$ for $\xi \in (w(t,y)-r, w(t,y))$, so that
$$
\begin{aligned}
\big[  \bs \lambda_1[\xi^\prime](\bs u(t,y))- \bs \lambda_1[\xi](\bs u(t,y)) \big] \bs \chi^{r, \ell}(t,y, \xi) \bs \chi^{r, \ell}(t,y, \xi^\prime) \leq -c (\xi-\xi^\prime)\bs \chi^{r, \ell}(t,y, \xi) \bs \chi^{r, \ell}(t,y, \xi^\prime).
\end{aligned}
$$
therefore, for some positive constant $C > 0$, we obtain

\begin{equation}
    \begin{aligned}
        \int_{ \mathbb R^3} (\xi-\xi^\prime)^+ & \big[  \bs \lambda_1[\xi^\prime](\bs u(t,y))
      - \bs \lambda_1[\xi](\bs u(t,y)) \big]\cdot \bs \chi^{r, \ell}(t,y, \xi) \bs \chi^{r, \ell}(t,y, \xi^\prime)\cdot \mathbf 1_{\{w(t,y) \leq \ell + r\}}(y) \dif \xi \dif \xi^\prime \dif y \\
      & \leq -c \int_{\mathbb R^3}[(\xi-\xi^\prime)^+]^2\bs \chi^{r, \ell}(t,y, \xi) \bs \chi^{r, \ell}(t,y, \xi^\prime) \mathbf 1_{\{w(t,y) \leq \ell + r\}}(y)\dif y \dif \xi \dif \xi^\prime \\
      & \leq -c^3 \int_{\mathbb R} \int_\ell^{w(t,y)} \int_\ell^{w(t,y)}[(\xi-\xi^\prime)^+]^2 \dif \xi \dif \xi^\prime\mathbf 1_{\{w(t,y) \in (\ell, \ell + r)\}}(y) \dif y   \\
      & \leq -\wt c\int_{\mathbb R} (w(t,y)-\ell)^4 \mathbf 1_{\{w(t,y) \in (\ell, \ell + r)\}}(y) \dif y \\
      & \leq  -\wt c\int_{\mathbb R} (g^{r,\ell}(t,y))^4 \mathbf 1_{\{w(t,y) \in (\ell, \ell + r)\}}(y) \dif y + \wt C \msc L^2\Big( \big\{(t,x) \in \mathbb R^+ \times \mathbb R \; | \; w(t,x) \geq  \ell +r \big\} \Big)
    \end{aligned}
\end{equation}
where $\wt c, \wt C>0$ are some positive constants and we recall the definition of $g^{r, \ell}$ at \eqref{eq:gdef}.
Then we finally deduce
\begin{equation}
\begin{aligned}
     \mc Q(T) -\mc Q(T_0) & \leq -C\, \int_{T_0}^T \int_\mathbb R (g^{r, \ell}(t,y))^4 \dif y \dif t  \\
     &+
          \mc O(1) \cdot \msc L^2\Big( \big\{(t,x) \in \mathbb R^+ \times \mathbb R \; | \; w(t,x) \geq  \ell +r \big\} \Big)
          + C \cdot \Vert{\bs u_0}-\widehat{\bs u}\Vert_{\mathbf L^1}.
          \end{aligned}
\end{equation}
Rearranging, using \eqref{eq:Qbound}, and letting $T \to +\infty$, $T_0 \to 0^+$, proves the result.
\end{proof}

We are now ready to prove Theorem \ref{thm:decay}. We present the result for the first Riemann invariant $w$ the proof for the second being entirely analogous. By combining the two cases, we obtain the full statement of Theorem \ref{thm:decay}.
\begin{thm}\label{thm:kinit}
Let $\bs u: \mathbb R^+ \times \mathbb R \to \mc U$ be a bounded vanishing viscosity solution to \eqref{eq:systemi} with an initial datum $\bs u_0$ such that $\bs u_0-\widehat{\bs u} \in \mathbf L^1$ for some constant $\widehat{\bs u} \in \mc U$, with $\widehat w = w(\widehat{\bs u})$. Assume that the eigenvalue $\lambda_1$ is genuinely nonlinear.
Then, it holds
\begin{equation}
    \int_0^{+\infty} \int_{\mathbb R} \big(w(t,x)-\widehat w\big)^4 \dif x \dif t \leq \mc O(1) \cdot \Big(\Vert{\bs u_0}-\widehat{\bs u}\Vert_{\mathbf L^1} + \big(\Vert{\bs u_0}-\widehat{\bs u}\Vert_{\mathbf L^1}\big)^2\Big).
\end{equation}
\end{thm}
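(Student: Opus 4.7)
My plan is to apply Proposition~\ref{prop:locdecayGNL} along a finite family of strips covering the range of the first Riemann invariant, and to combine the resulting one-strip bounds by exploiting a subtraction trick. Write $M := \|\bs u_0 - \widehat{\bs u}\|_{\mathbf L^1} + \|\bs u_0 - \widehat{\bs u}\|_{\mathbf L^1}^2$ and focus on $(w - \widehat w)^+$; the negative part and the analogous bound for the second Riemann invariant follow by symmetry and are combined to give Theorem~\ref{thm:decay}.

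The first step is to extract more from Proposition~\ref{prop:locdecayGNL} than its statement literally offers. Writing
\begin{equation*}
\int_0^{+\infty}\!\!\!\int_\mathbb R (g^{r,\ell})^4\,dx\,dt \;=\; J_\ell \;+\; r^4 \mathscr L^2\bigl(\{w \geq \ell + r\}\bigr),
\quad
J_\ell \doteq \int_0^{+\infty}\!\!\!\int_\mathbb R (w - \ell)^4\, \mathbf 1_{\{\ell \leq w < \ell + r\}}\,dx\,dt,
\end{equation*}
I would track constants in the proof of Proposition~\ref{prop:locdecayGNL}: the dispersion estimate there gives $\dot{\mathcal Q}(t) \leq -\wt c \int (w - \ell)^4 \mathbf 1_{\{w \in (\ell, \ell + r)\}}\,dx$ plus terms controlled by $M$, so integrating in time and using $\mathcal Q(0) \leq \bar r\, \|\bs u_0 - \widehat{\bs u}\|_{\mathbf L^1}^2 \leq C M$ yields the clean \emph{strip-content bound} $J_\ell \leq CM$ for every admissible $\ell$ and every $r \leq \bar r$.

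With the strip-content bound in hand, I would fix $r = \bar r/2$ and set $\ell_k = \widehat w + kr$ for $k = 0,\dots, N$, where $N$ is the least integer with $Nr \geq \bar w - \widehat w$; write $A_k \doteq \mathscr L^2(\{w \geq \ell_k\})$, noting $A_N = 0$. To control the shell masses $A_k - A_{k+1}$ for $k \geq 1$, I would re-apply the strip-content bound with $\ell = \ell_{k-1}$ and \emph{doubled} width $2r$ (legitimate since $2r = \bar r$), obtaining $\int\!\int (w - \ell_{k-1})^4 \mathbf 1_{\{\ell_{k-1} \leq w < \ell_{k+1}\}}\,dx\,dt \leq CM$; since $w - \ell_{k-1} \geq r$ on $\{\ell_k \leq w < \ell_{k+1}\}$, this yields $r^4 (A_k - A_{k+1}) \leq CM$ for $k = 1, \dots, N-1$. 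Decomposing
\begin{equation*}
\int_0^{+\infty}\!\!\int_\mathbb R \bigl((w - \widehat w)^+\bigr)^4 \,dx\,dt \;=\; \sum_{k=0}^{N-1} \int\!\!\int_{\{\ell_k \leq w < \ell_{k+1}\}} (w - \widehat w)^4 \,dx\,dt,
\end{equation*}
the $k = 0$ term equals $J_0 \leq CM$; for $k \geq 1$ I bound $(w - \widehat w)^4 \leq ((k+1) r)^4$ on the strip, so the summand is at most $((k+1) r)^4 (A_k - A_{k+1}) \leq (k+1)^4 CM$. Summing and using that $N$ depends only on the system parameters concludes the proof.

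\emph{The main obstacle} is that a naive iteration of Proposition~\ref{prop:locdecayGNL} is vacuous: combining the upper bound $\int\!\int (g^{r,\ell})^4 \leq CM + r^4 A(\ell + r)$ with the trivial lower bound $r^4 A(\ell + r) \leq \int\!\int (g^{r,\ell})^4$ gives no information. The resolution—extracting the strip-content bound $J_\ell \leq CM$ by subtraction, and then controlling shell masses through a second application at doubled width, so that $(w - \ell_{k-1})^4 \geq r^4$ on the relevant subset—is where the key work lies.
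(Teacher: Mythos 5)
Your overall architecture (apply the one-strip estimate at finitely many levels, then combine) matches the paper's, but the pivotal step --- the ``clean strip-content bound'' $J_\ell \leq CM$ --- does not follow from the proof of Proposition~\ref{prop:locdecayGNL} by tracking constants, and this is a genuine gap. In that proof the dispersive sign of the integrand is only available on the region $\{w(t,y) \leq \ell + r\}$: there $\xi,\xi' \in (\ell,\ell+r)$ lie within distance $r \leq \bar r$ of $w(t,y)$, so Proposition~\ref{prop:localspeed} gives both the identity $\bs\psi = \bs\lambda_1\bs\chi$ and the strict positivity and monotonicity needed for the good term $-\wt c\int (w-\ell)^4\mathbf 1_{\{w\in(\ell,\ell+r)\}}$. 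On the complementary region $\{w(t,y) > \ell + r\}$ the kinetic speed $\bs\lambda_1[\xi]$ is not even defined by \eqref{eq:lambdadef} and $\bs\chi[\xi](\bs u)$ may change sign, so the integrand can only be bounded brutally by a constant; integrated in $(t,y)$ this produces exactly the term $C\,\msc L^2(\{w \geq \ell+r\})$. That quantity is \emph{not} controlled by $M$: the available a priori bound \eqref{eq:chiL2} controls $\msc L^1(\{x : w(t,x) > \ell + \bar r\})$ for each fixed $t$, which integrated over $t \in (0,\infty)$ gives nothing, and controlling the space--time measure of superlevel sets is precisely what the theorem is trying to establish. So your step 1 assumes a strengthened form of Proposition~\ref{prop:locdecayGNL} that its proof does not deliver.

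Relatedly, your ``main obstacle'' paragraph mischaracterizes the iteration as vacuous. The paper does not pair the upper bound with a lower bound at the \emph{same} level $\ell$; it uses staggered levels $\ell_i = \widehat w + i r/2$ and the inclusion $\{w \geq \ell_i + r\} \subset \{g^{r,\ell_{i+1}} \geq r/2\}$, so Markov controls $\msc L^2(\{w \geq \ell_i+r\})$ by $\frac{16}{r^4}\int (g^{r,\ell_{i+1}})^4$, i.e.\ by the functional at the strictly higher level $i+1$. Since $w \leq \bar w$ a.e., the recursion terminates after $2(k-1)$ steps with a vanishing measure term, and the constants compound only finitely many times. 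Your own shell decomposition can in fact be repaired along the same lines: keep the term $C\,\msc L^2(\{w\geq\ell+r\}) = C A_{k+1}$ in the doubled-width estimate, obtaining $r^4(A_k - A_{k+1}) \leq CM + CA_{k+1}$, and run a downward induction from $k = N-1$ (where $A_N = 0$) to get $A_k \leq C'M$ for all $k \geq 1$; the rest of your argument then goes through. But as written, the proof is incomplete at its central step.
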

\begin{proof}
    We first prove that 
    \begin{equation}
    \int_0^{+\infty} \int_{\mathbb R} \big([w(t,x)- \widehat w]^+\big)^4 \dif x \dif t \leq \mc O(1) \cdot \Big(\Vert{\bs u_0}-\widehat{\bs u}\Vert_{\mathbf L^1} + \big(\Vert{\bs u_0}-\widehat{\bs u}\Vert_{\mathbf L^1}\big)^2\Big).
\end{equation}
The estimate for the negative part is entirely symmetric and is accordingly omitted.
Define $k \in \mathbb N, r>0$ by 
$$
k \doteq \left\lceil \frac{\bar  w-\widehat w}{\bar  {r}}\right\rceil, \qquad r \doteq  \frac{\bar  w-\widehat w}{k}
$$
where for $a \in \mathbb R$, the ceiling function $\lceil a\rceil$ denotes the smallest integer greater then $a$.
Then $r \cdot k =\bar  w-\widehat w$ and $0 < r \leq \bar { r}$. Define points $\ell_0, \ell_1, \ldots, \ell_{2(k-1)}$ by 
$$
\ell_0 \doteq \widehat w, \qquad \ell_i \doteq \widehat w + i \cdot \frac{r}{2}, \qquad i =1, \ldots,  2(k-1).
$$
Proposition \ref{prop:locdecayGNL} yields, by setting $\ell  = \ell_i$, 
\begin{equation}
    \begin{aligned}
    \int_0^{+\infty}\int_\mathbb R\big( g^{r,\ell_i}(t,x)\big)^4 \dif x \dif t & \leq\mc O(1) \cdot \Big(\Vert{\bs u_0}-\widehat{\bs u}\Vert_{\mathbf L^1} + \big(\Vert{\bs u_0}-\widehat{\bs u}\Vert_{\mathbf L^1}\big)^2\Big)\\
    & +\mc O(1) \cdot \msc L^2\Big( \big\{(t,x) \in \mathbb R^+ \times \mathbb R \; | \; w(t,x) \geq  \ell_i +r \big\} \Big).
        \end{aligned}
    \end{equation}
    Notice that 
    \begin{equation}\label{eq:cheb1}
    w(t,x) \geq \ell_i + r \quad \Longrightarrow \quad g^{r, \ell_{i+1}}(t,x) \geq \frac{r}{2}.
    \end{equation}
Now, for every $i = 0, \ldots, 2(k-1)-1$, using \eqref{eq:cheb1} and Markov's inequality, we estimate
\begin{equation}
\begin{aligned}
    \msc L^2\Big( \big\{(t,x) \in \mathbb R^+ \times \mathbb R \; | \; w(t,x) \geq  \ell_i +r \big\} \Big) & \leq \msc L^2\Big( \big\{(t,x) \in \mathbb R^+ \times \mathbb R \; | \; g^{r, \ell} \geq  r/2 \big\} \Big)\\
    & \leq  \frac{16}{r^4} \cdot \int_0^{+\infty}\int_\mathbb R\big( g^{r,\ell_{i+1}}(t,x)\big)^4 \dif x \dif t
    \end{aligned}
\end{equation}
Then, starting from $i = 0$, and applying the estimates above repeatedly, we obtain 
\begin{equation}
    \begin{aligned}
        \int_0^{+\infty}\int_\mathbb R\big( g^{r,\ell_{0}}(t,x)\big)^4 \dif x \dif t & \leq 2(k-1)\cdot \mc O(1) \cdot \Big(\Vert{\bs u_0}-\widehat{\bs u}\Vert_{\mathbf L^1} + \big(\Vert{\bs u_0}-\widehat{\bs u}\Vert_{\mathbf L^1}\big)^2\Big)\\
        & + \mc O(1)\cdot \msc L^2\Big( \big\{(t,x) \in \mathbb R^+ \times \mathbb R \; | \; w(t,x) \geq  \ell_{2(k-1)} +r \big\} \Big).
    \end{aligned}
\end{equation}
But now notice that $\ell_{2(k-1)}+r = \bar  w$, and by assumption $w(t,x) \leq \bar  w$ for a.e. $(t,x)$, therefore 
$$
 \msc L^2\Big( \big\{(t,x) \in \mathbb R^+ \times \mathbb R \; | \; w(t,x) \geq  \ell_{2(k-1)} +r \big\} \Big) = 0.
$$
Moreover, we clearly have
$$
    \int_0^{+\infty}\int_\mathbb R\big( [w(t,x)-\widehat w ]^+\big)^4 \dif x \dif t  \leq \mc O(1) \cdot  \frac{1}{r^4}\int_0^{+\infty}\int_\mathbb R\big( g^{r,\ell_{0}}(t,x)\big)^4 \dif x \dif t.
$$
Therefore we conclude that 
$$
 \int_0^{+\infty}\int_\mathbb R\big( [w(t,x)-\widehat w ]^+\big)^4 \dif x \dif t \leq \mc O(1) \cdot \Big(\Vert{\bs u_0}-\widehat{\bs u}\Vert_{\mathbf L^1} + \big(\Vert{\bs u_0}-\widehat{\bs u}\Vert_{\mathbf L^1}\big)^2\Big)
 $$
and this proves the result.
\end{proof}

\noindent {\bf Acknowledgements.}  The author wishes to thank Fabio Ancona for a careful reading of a preliminary version of the manuscript, and is partially supported by the GNAMPA - Indam Project 2025 \say{Rappresentazione lagrangiana per sistemi di leggi di conservazione ed equazioni cinetiche.   }

\end{document}